\newcommand{\varrg}{(M, g)}
\newcommand{\erre}{\mathds{R}}
\newcommand{\cinf}{C^{\infty}(M)}
\newcommand{\ricc}{\operatorname{Ric}}
\newcommand{\diver}{\operatorname{div}}
\newcommand{\hess}{\operatorname{Hess}}
\newcommand{\set}[1]{{\left\{#1\right\}}}               % tra parentesi graffe
\newcommand{\pa}[1]{{\left(#1\right)}}                  % tra tonde
\newcommand{\sq}[1]{{\left[#1\right]}}                  % tra quadre
\newcommand{\abs}[1]{{\left|#1\right|}}                 % valore assoluto
\newcommand{\riemanng}[1]{\pa{#1,g}}                      % varietà riemanniana con g
\renewcommand{\tilde}[1]{\widetilde{#1}}
\newtheorem{theorem}{\textbf{Theorem}}[section]
\newtheorem{lemma}[theorem]{\textbf{Lemma}}
\newtheorem{proposition}[theorem]{\textbf{Proposition}}
\newtheorem{defi}[theorem]{\textbf{Definition}}
\theoremstyle{remark}
\newtheorem{rem}[theorem]{\textbf{Remark}}
\numberwithin{equation}{section}
\title[CRS]
{Conformal Ricci Solitons and related Integrability Conditions}
\date{\today} \linespread{1.2}
\keywords{integrability conditions, conformal Ricci soliton, Ricci soliton, conformal Einstein manifold, commutation rules, conformal change of the metric}
\subjclass[2010]{53C20, 53C25.}
\begin{document}
%\title{On fourth order conformally invariant differential operators on Riemannian manifolds}
\maketitle

\begin{center}
\textsc{\textmd{G. Catino\footnote{Politecnico di Milano, Italy.
Email: giovanni.catino@polimi.it. Supported
by GNAMPA projects ``Equazioni differenziali con invarianze in
analisi globale'' and ``Equazioni di evoluzione geometriche e strutture di tipo Einstein''.}, P.
Mastrolia\footnote{Universit\`{a} degli Studi di Milano, Italy.
Email: paolo.mastrolia@gmail.com. Partially supported by FSE,
Regione Lombardia and GNAMPA project ``Analisi Globale ed Operatori Degeneri''.}, D. D. Monticelli\footnote{Universit\`{a} degli
Studi di Milano, Italy. Email: dario.monticelli@unimi.it. Supported
by GNAMPA projects ``Equazioni differenziali con invarianze in
analisi globale'' and ``Analisi Globale ed Operatori Degeneri''.} and M.
Rigoli\footnote{Universit\`{a} degli Studi di Milano, Italy. Email:
marco.rigoli@unimi.it. \\ The first, the second and the third authors are members of the Gruppo Nazionale per
l'Analisi Matematica, la Probabilit\`{a} e le loro Applicazioni (GNAMPA)
of the Istituto Nazionale di Alta Matematica (INdAM).
}, }}
\end{center}
\begin{abstract}
In this paper we introduce, in the Riemannian setting, the notion of \emph{conformal Ricci soliton}, which includes as particular cases \emph{Einstein manifolds}, \emph{conformal Einstein manifolds} and (generic and gradient) \emph{Ricci solitons}. We provide here some necessary integrability conditions for the existence of these structures that also recover, in the corresponding contexts, those already known in the literature for conformally Einstein manifolds and  for gradient Ricci solitons. A crucial tool in our analysis is the construction of some appropriate and highly nontrivial  $\pa{0,3}$-tensors related to the geometric structures, that in the special case of gradient Ricci solitons become the celebrated tensor $D$ recently introduced by Cao and Chen. A significant part of our investigation, which has independent interest, is the derivation of a number of commutation rules for covariant derivatives (of functions and tensors) and of transformation laws of some geometric objects under a conformal change of the underlying metric.
\end{abstract}

%\tableofcontents

\section{Introduction}

% In the study of Riemannian geometry  Einstein manifolds play a prominent role.
%
%Negli ultimi 30 anni: alcune delle strutture pi\`u studiate in letteratura sono generalizzazioni delle variet\`a di Einstein (conformal Einstein (cfr. \cite{GoverNurowski}), Ricci solitons, quasi-Einstein).
%
%Parallelamente: flussi geometrici intrinseci (Ricci flow, Yamabe flow), a cui sono associate soluzioni autosimilari.
%
%Noi: consideriamo strutture che realizzano, in alcuni casi particolari ma fondamentali, tutte quelle citate in precedenza (e molte altre, visto che includiamo una deformazione conforme della metrica).
%
%
%Triplice scopo:
%\begin{enumerate}
%\item trovare condizioni necessarie di tipo geometrico per l'esistenza di queste strutture (nello spirito delle ``condizioni di integrabilit\`a'' per sistemi differenziali , cfr. ancora \cite{GoverNurowski} o \cite{CaseRigidity}); abbastanza sorprendemente, c'\`e un ``filo conduttore'' che lega risultati apparentemente indipendenti;
%\item ruolo del tensore $D$ in alcune situazioni particolari (formule per $\abs{D}^2$, teoremi di classificazione in caso di annullamento di $D$);
%\item compendio di formule utili (leggi di trasformazione per tensori, formule di commutazione).
%\end{enumerate}

In recent years the pioneering works of R. Hamilton (\cite{hamilton}) and G. Perelman (\cite{perelman}) towards the solution of the Poincar\'e conjecture in dimension $3$ have produced a flourishing activity in the research of self similar solutions, or \emph{solitons}, of the Ricci flow. The study of the geometry of solitons, in particular their classification in dimension $3$, has been essential in providing a positive answer to the conjecture; however, in higher dimension and in the complete, possibly noncompact case, the understanding of the geometry and the classification of solitons seems to remain a desired goal for a not too proximate future. In the \emph{generic} case a \emph{soliton structure} on the Riemannian manifold $\varrg$ is the choice (if any) of a smooth vector field $X$ on $M$ and a real constant $\lambda$ satisfying the structural requirement
\begin{equation}\label{EQ0.1_RicSolEq}
  \ricc + \frac 12 \,\mathcal{L}_Xg = \lambda g,
\end{equation}
where $\ricc$ is the Ricci tensor of the metric $g$  and $\mathcal{L}_Xg$ is the Lie derivative of this latter in the direction of $X$. In what follows we shall refer to $\lambda$ as to the \emph{soliton constant}. The soliton is called \emph{expanding}, \emph{steady} or \emph{shrinking} if, respectively, $\lambda < 0$, $\lambda = 0$ or $\lambda >0$. When $X$ is the gradient of a potential $f \in C^{\infty}\pa{M}$, the soliton is called a \emph{gradient Ricci soliton} and the previous equation \eqref{EQ0.1_RicSolEq} takes the form
\begin{equation}\label{EQ0.2_GradRicSolEq}
  \ricc + \hess{f} = \lambda g.
\end{equation}
Both equations \eqref{EQ0.1_RicSolEq} and \eqref{EQ0.2_GradRicSolEq} can be considered as perturbations of the Einstein equation
\begin{equation}\label{EQGianniEinstein}
\ricc = \lambda g
\end{equation}
and reduce to this latter in case $X$ or $\nabla f$ are Killing vector fields. When $X=0$ or $f$ is constant we call the underlying Einstein manifold a \emph{trivial} Ricci soliton.
The great interest raised by these structures is also shown by the rapidly increasing number of works devoted to their study; for instance, just to cite a few of them,  we mention in particular  \cite{Ivey}, \cite{perelman1}, \cite{ELnM},
\cite{NiWallach},  \cite{ZHZhang}, \cite{brendle}, \cite{Naber}, \cite{petwylie3},
\cite{CaoZhou}, \cite{HDCaoChen_Steady}, \cite{CatMantEv},
\cite{PRiS}, \cite{XDCaoWangZhang_ShrinkingRS},
\cite{Catino_pinched}, \cite{CaoChen}, \cite{CaoCatinoChenMantMazz},
\cite{MasRigRim}, \cite{CMMRGen} (and references therein) on Ricci solitons and \cite{Jensen}, \cite{Besse}, \cite{WangZiller1}, \cite{WangZiller2}, \cite{MasMonRig_Curvature} (and references therein) on Einstein manifolds.

%\emph{Einstein manifolds} (see e.g.
%) and \emph{Ricci solitons} (see e.g.
%\cite{hamilton}, \cite{Ivey}, \cite{perelman1}, \cite{ELnM},
%\cite{NiWallach},  \cite{ZHZhang}, \cite{Naber}, \cite{petwylie3},
%\cite{CaoZhou}, \cite{HDCaoChen_Steady}, \cite{CatMantEv},
%\cite{PRiS}, \cite{XDCaoWangZhang_ShrinkingRS},
%\cite{Catino_pinched}, \cite{CaoChen}, \cite{CaoCatinoChenMantMazz},
%\cite{MasRigRim}, \cite{brendle} and references therein).

%It is usual to call a soliton \emph{trivial} either when $X\equiv 0$ or $\nabla f \equiv 0$; in both cases, considering \eqref{EQ0.1_RicSolEq} and \eqref{EQ0.2_GradRicSolEq} simply as perturbations of the Einstein equation
%\[
%\ricc = \lambda g
%\]
%\emph{via} $X$ or $f$, by Schur theorem, when the dimension of the manifold $M$ is at least $3$, we deduce that the underlying metric is Einstein.
% ***BISOGNA AGGIUNGERE BIBLIO SUI SOLITONI/AGGIUNGERE BIBLIO SULLE EINSTEIN (ad es. Mastrolia-Monticelli-Rigoli)***

A natural question, which arises for instance in conformal geometry, is to construct \emph{conformally Einstein manifolds}, i.e. Riemannian manifolds $\varrg$ for which there exists a pointwise conformal deformation $\tilde{g}=e^{2u}g$, $u\in\cinf$, such that the new metric $\tilde{g}$ is Einstein. This problem has received a considerable amount of attention by mathematicians and physicists in the last decades: just to mention some old and recent papers we cite the pioneering work of Brinkmann, \cite{Brink}, Yano and Nagano, \cite{YanoNagano}, Gover and Nurowski, \cite{GoverNurowski}, Kapadia and Sparling, \cite{KapadiaSparling}, Derdzisnki and Maschler, \cite{DerdMasch}, and references therein. In particular in \cite{GoverNurowski} the authors describe two necessary integrability conditions for the existence of the conformal deformation $\tilde{g}$ realizing the Einstein metric. They are of course related to the system
\begin{equation}\label{EQ0.3}
  \widetilde\ricc = \lambda \tilde g,
\end{equation}
where tilded  quantities refer to the metric $\tilde g$, and they are expressed in terms of the Cotton, Weyl and Bach tensors and the gradient of $u$ in the background metric $g$ (see Section \ref{SecDef} for precise definitions); precisely, performing a computation in some sense reminiscent of the classical Cartan's approach to the treatment of differential systems, Gover and Nurowski show that if $\varrg$ is a conformally Einstein Riemannian manifold, then the Cotton tensor, the Weyl tensor, the Bach tensor and the exponent $u$ of the stretching factor satisfy the conditions (see also Proposition \ref{Prop_Gover_Nurowski2.1})
 \begin{align}
   &C_{ijk} -(m-2)u_tW_{tijk}=0, \label{GNintro1} \\ &B_{ij}-(m-4)u_tu_kW_{itjk}=0. \label{GNintro2}
 \end{align}

On the other hand, Cao and Chen in \cite{CaoChen0} and \cite{CaoChen} study the geometry of  Bach flat gradient solitons, introducing a $(0, 3)$-tensor $D$ related to the geometry of the level surfaces of the potential $f$ that generates the soliton structure. The vanishing of $D$, obtained \emph{via} the vanishing of the Bach tensor, is a crucial ingredient in their classification of a wide family of complete gradient Ricci solitons;  in particular in their proof they show that every gradient Ricci soliton satisfies the two conditions
 \begin{align}
\label{piddu1}     &C_{ijk}+f_t W_{tijk} = D_{ijk},\\ \label{nonumber}    &B_{ij} = \frac{1}{m-2}\sq{D_{ijk, k}+\pa{\frac{m-3}{m-2}}f_tC_{jit}}.
  \end{align}
  The above equations must be intended as integrability conditions for  solitons, in the same way as \eqref{GNintro1} and \eqref{GNintro2} are related to  conformally Einstein manifolds. We observe that the aforementioned classification result has been recently generalized by the present authors in \cite{CMMRGet} to a new general structure (which includes Ricci solitons, Yamabe solitons, quasi-Einstein manifolds and almost Ricci solitons), called  \emph{(gradient) Einstein-type manifold}, for which the corresponding integrability conditions have also been computed.

 In the present work we introduce for the first time the counterpart of the tensor $D$ in the case of \emph{generic Ricci solitons}: we call it $D^X$ and we show that in this setting the integrability conditions take the form
 \begin{align}
     &C_{ijk}+X_t W_{tijk} = D^X_{ijk}, \label{INTROfirstGenericRSIntCondition}\\     &B_{ij} = \frac{1}{m-2}\pa{D^X_{ijk, k}+\frac{m-3}{m-2}X_tC_{jit}+\frac 12\pa{X_{tk}-X_{kt}}W_{itjk}} \label{INTROsecondGenericRSIntCondition}
  \end{align}
(see Theorem \ref{GenRS_TH_FirstCond}). We explicitly note that, if $X=\nabla f$ for some $f \in \cinf$, then $D^X \equiv D$ and the two previous equations become, as one should expect, \eqref{piddu1} and \eqref{nonumber} respectively.

Since Einstein metrics are trivial solitons, it is now natural to study \emph{conformal Ricci solitons}, i.e. to search for pointwise conformal transformations of the metric as above, such that the manifold $\pa{M, \tilde g}$ is a gradient Ricci soliton, that is for some $f\in \cinf$ and $\lambda \in \erre$ we have the structural relation
\begin{equation}\label{EQ0.4}
  \widetilde \ricc + \widetilde\hess\pa{f} = \lambda\tilde{g}.
\end{equation}
%Recently in \cite{BarbPinaTenen} this problem has been considered for the case of gradient Ricci solitons conformal to an $n$-dimensional pseudo-Euclidean space.

One of the main aims of the paper is to produce integrability conditions corresponding to these structures; in their study we introduce here for the first time a natural $(0,3)$-tensor, which we denote by $D^{\pa{u, f}}$ (see \eqref{tensorD_u_f_bestVersion}) and which allows to interpret the corresponding integrability conditions as interpolations between those associated to conformally Einstein manifolds \eqref{GNintro1} and \eqref{GNintro2}, and those related to gradient Ricci solitons \eqref{piddu1} and \eqref{nonumber}. Moreover, $D^{\pa{u, f}}$ vanishes identically in the case of a conformally Einstein manifold, while it reduces to the tensor $D$ on a gradient Ricci soliton.
More precisely, in Section \ref{sec_ConfGradRS} we obtain two integrability conditions  for \eqref{EQ0.4} (see Theorems \ref{TH_FirstConditionCGRS} and \ref{CGRS_TH_SecCond}), which tell us that if $\varrg$ is a conformal gradient Ricci soliton then
\begin{equation}
  C_{ijk}-\sq{(m-2)u_t-f_t}W_{tijk} = D^{\pa{u, f}}_{ijk}
\end{equation}
and
\begin{equation}
    B_{ij}= \frac{1}{m-2}\set{D^{\pa{u, f}}_{ijk, k} -\pa{\frac{m-3}{m-2}}\sq{\pa{m-2}u_t-f_t}C_{jit}+\sq{f_tu_k+f_ku_t-(m-2)u_tu_k}W_{itjk}}.
  \end{equation}
In Section \ref{Sec_Conf_GenRS} we further extend our results to the very general case of a \emph{conformal generic Ricci soliton}, that is a Riemannian manifold $\varrg$ such that, for a conformal change of the metric $\tilde{g}=e^{2u}g$ with $u\in \cinf$, there exist a smooth vector field $X$, not necessarily a gradient, and a constant $\lambda$ such that
\[
\widetilde{\ricc} +\frac 12\mathcal{L}_X\tilde{g} = \lambda\tilde{g}.
\]
In this case the integrability conditions that we produce (see Theorems \ref{TH_FirstConditionCGenericRS} and \ref{TH_SecondConditionCGenericRS}) involve the construction of the appropriate generalization of both the tensors $D^X$ and  $D^{\pa{u, f}}$, that we call $D^{\pa{u, X}}$ and which reduces to the previous ones in the corresponding cases. As one can expect, these new conditions capture all those appearing in the aforementioned settings.

As it will become apparent to the reader, the analysis carried out in this paper is very heavy from the computational point of view; in order to ease the comprehension and also to provide help for future investigations, another aim of this paper is to present,  in a organized way, a number of useful formulas ranging from transformation laws for certain tensors to commutation rules for covariant derivatives that, to the best of our knowledge, are either difficult to find or not even present in the literature. In performing our calculations we exploit the \emph{moving frame} formalism, that turns out to be particularly appropriate for very long and involved computations like those appearing in our work.

The paper is organized as follows. In Section \ref{SecDef} we recall the relevant definitions and notation; in Section \ref{SecTrans} we compute the transformations laws of the previously introduced geometric objects under a conformal change of the underlying metric, while in Section \ref{SecCommy} we provide (and prove, in some particular cases) a number of useful commutation rules of covariant derivatives of functions, vector fields and geometric tensors. Sections \ref{sec5} and \ref{SecConfy} are brief reviews of results related to Ricci solitons and conformally Einstein manifolds, respectively. In Section \ref{sec_ConfGradRS} we study conformal gradient Ricci solitons, introducing the tensor $D^{\pa{u, f}}$ and the related integrability conditions. The subsequent Sections \ref{SecGenericRS} and \ref{Sec_Conf_GenRS} are devoted to the analysis of generic Ricci solitons and their conformal counterparts, involving the tensors $D^X$ and $D^\pa{u, X}$. In Section \ref{sec_higherorder} we come back to the case of gradient Ricci solitons and we  deduce the third and fourth integrability conditions. We end the paper with a final section in which we describe some interesting open problems, which - we hope - will inspire further investigations in these challenging but stimulating lines of research.

\section{Definitions and notation}\label{SecDef}

We begin by introducing some classical notions and objects we will be dealing with in the sequel  (see also \cite{MasRigSet} and \cite{CMMRGet}).

To perform computations, we use the moving frame notation with respect to a local orthonormal
 coframe. Thus we fix the index range $1\leq i, j, \ldots \leq m$ and recall that the Einstein summation convention will be in force throughout.

We denote by $\operatorname{R}$ the \emph{Riemann curvature tensor} (of type $\pa{1, 3}$) associated to the metric $g$, and by $\ricc$ and $S$ the corresponding \emph{Ricci tensor} and \emph{scalar curvature}, respectively.
The $(0, 4)$-versions of the Riemann curvature tensor and of the \emph{Weyl tensor} $\operatorname{W}$ are related in the following way:
\begin{equation}\label{Riemann_Weyl}
  R_{ijkt} = W_{ijkt} + \frac{1}{m-2}\pa{R_{ik}\delta_{jt}-R_{it}\delta_{jk}+R_{jt}\delta_{ik}-R_{jk}\delta_{it}}-\frac{S}{(m-1)(m-2)}\pa{\delta_{ik}\delta_{jt}-\delta_{it}\delta_{jk}}
\end{equation}
and they satisfy the  symmetry relations:
\begin{equation}
R_{ijkt} = -R_{jikt} = -R_{ijtk} = R_{ktij};
\end{equation}
\begin{equation}
W_{ijkt} = -W_{jikt} = -W_{ijtk} = W_{ktij}.
\end{equation}
A simple checking shows that the Weyl tensor is also totally trace-free and that it vanishes if $m=3$.

According to the above the (components of the) Ricci tensor and the scalar curvature are given by
\begin{equation}
  R_{ij} = R_{itjt} = R_{titj}
\end{equation}
and
\begin{equation}
  S = R_{tt}.
\end{equation}

The \emph{Schouten tensor} $\mathrm{A}$ is defined as
\begin{equation}\label{def_Schouten}
  \mathrm{A} = \ricc -\frac{S}{2(m-1)}g
\end{equation}
%or, in components,
%\begin{equation}\label{def_Schouten_comp}
%  A_{ij} = R_{ij} -\frac{S}{2(m-1)}\delta_{ij}.
%\end{equation}

so that its trace  is
  \begin{equation}
    \operatorname{tr}(\mathrm{A}) = A_{tt} = \frac{(m-2)}{2(m-1)}S.
  \end{equation}

%\begin{rem}
%  Some authors define the Schouten tensor as $\mathcal{A}=\frac{1}{m-2}\pa{\ricc -\frac{S}{2(m-1)}g}$ (see for instance  \cite{LiLiCPAM} ***altri***).
%\end{rem}

In terms of the Schouten tensor the decomposition of the Riemann curvature tensor reads as
\begin{equation}\label{decompRiemSchouten}
  \operatorname{R} = \textrm{W} + \frac{1}{m-2}\textrm{A}\owedge g,
\end{equation}
where $\owedge$ is the Kulkarni-Nomizu product; in components,
\begin{equation}\label{Riemann_Weyl_Schouten}
  R_{ijkt} = W_{ijkt} + \frac{1}{m-2}\pa{A_{ik}\delta_{jt}-A_{it}\delta_{jk}+A_{jt}\delta_{ik}-A_{jk}\delta_{it}}.
\end{equation}
We note that $\textrm{W}$ (more precisely, its $\pa{1, 3}$-version) is a conformal invariant (see e.g. \cite{MasRigSet}), hence  the above decomposition shows that the Schouten tensor is crucial in the study of conformal transformations.

The \emph{Cotton tensor} $C$ can be introduced as the obstruction for the Schouten tensor to be Codazzi, that is,
 \begin{equation}\label{def_Cotton_comp}
   C_{ijk} = A_{ij, k} - A_{ik, j} = R_{ij, k} - R_{ik, j} - \frac{1}{2(m-1)}\pa{S_k\delta_{ij}-S_j\delta_{ik}}.
 \end{equation}
 We recall that, for $m\geq 4$,  the Cotton tensor can also be defined as one of the possible divergences of the Weyl tensor:
 \begin{equation}\label{def_Cotton_comp_Weyl}
 C_{ijk}=\pa{\frac{m-2}{m-3}}W_{tikj, t}=-\pa{\frac{m-2}{m-3}}W_{tijk, t}.
 \end{equation}
 A computation shows that the two definitions coincide (see again \cite{MasRigSet}).
   The Cotton tensor enjoys  skew-symmetry in the second and third indices (i.e. $C_{ijk}=-C_{ikj}$) and furthermore is totally trace-free (i.e. $C_{iik}=C_{iki}=C_{kii}=0$).

 In what follows a relevant role will be played by the \emph{Bach tensor}, first introduced in general relativity by Bach, \cite{Bach}. Its componentwise definition is
 \begin{equation}\label{def_Bach_comp}
   B_{ij} = \frac{1}{m-3}W_{ikjl, lk} + \frac{1}{m-2}R_{kl}W_{ikjl} = \frac{1}{m-2}\pa{C_{jik, k}+R_{kl}W_{ikjl}}.
 \end{equation}

  A computation using the commutation rules for the second covariant derivative of the Weyl tensor or of the Schouten tensor (see the next section for both) shows that the Bach tensor is symmetric (i.e. $B_{ij}=B_{ji}$); it is also evidently trace-free (i.e. $B_{ii}=0$).  As a consequence we observe that we can write
  \[
   B_{ij}= \frac{1}{m-2}\pa{C_{ijk, k}+R_{kl}W_{ikjl}}.
  \]
It is worth reporting here the following interesting formula for the divergence of the Bach tensor (see e. g. \cite{CaoChen} for its proof)
\begin{equation}\label{diverBach}
  B_{ij, j} = \frac{m-4}{\pa{m-2}^2}R_{kt}C_{kti}.
\end{equation}

We also recall the definition of the \emph{ Einstein tensor}, which in components is given by
 \begin{equation}\label{def_Einstein_comp}
  E_{ij} = R_{ij} - \frac{S}{2}\delta_{ij}.
 \end{equation}

%An \emph{Einstein manifold} is a Riemannian manifold $\varrg$ such that
%\[
%\ricc = \lambda g
%\]
%for some $\lambda \in \erre$.
%A (\emph{generic}) \emph{soliton structure} $(M, g, X)$ on $M$ is the choice (if any) of a smooth vector field $X$ on $M$ and a real constant $\lambda$ (the \emph{soliton constant}) such that
%\begin{equation}\label{RicSolEq}
%  \ricc + \frac 12 \,\mathcal{L}_Xg = \lambda g,
%\end{equation}
%where  $\mathcal{L}_Xg$ is the Lie derivative of this latter in the direction of $X$. The soliton is called \emph{expanding}, \emph{steady} or \emph{shrinking} if, respectively, $\lambda < 0$, $\lambda = 0$ or $\lambda >0$. If $X$ is the gradient of a potential $f \in C^{\infty}\pa{M}$, then \eqref{RicSolEq} takes the form
%\begin{equation}\label{GradRicSolEq}
%  \ricc + \hess{f} = \lambda g,
%\end{equation}
%and the Ricci soliton is called a \emph{gradient Ricci soliton}. Both equations \eqref{RicSolEq} and \eqref{GradRicSolEq} can be considered as perturbations of the Einstein equation and reduce to this latter in case $X$ or $\nabla f$ are Killing vector fields. When $X=0$ or $f$ is constant we call the underlying Einstein manifold a \emph{trivial} Ricci soliton.

One of the main objects of our investigation are \emph{Ricci solitons}, which are defined through equation \eqref{EQ0.1_RicSolEq}; we explicitly note
that in components this latter  becomes
 \begin{equation}\label{RicSolEqComponents}
    R_{ij}+\frac{1}{2}\pa{X_{ij}+X_{ji}}=\lambda \delta_{ij}, \quad \lambda \in \erre
     \end{equation}
     and, in the gradient case,
 \begin{equation}\label{GradRicSolEqComponents}
   R_{ij}+f_{ij}=\lambda \delta_{ij}, \quad \lambda \in \erre.
 \end{equation}

The tensor $D$, introduced by Cao and Chen  in \cite{HDCaoChen_Steady}, turns out to be a fundamental tool in the study of the geometry of gradient Ricci solitons and, more in general, of gradient Einstein-type manifolds, as observed in \cite{CMMRGet}; in components it is defined as
 \begin{align}
   D_{ijk}=\frac{1}{m-2}\pa{f_kR_{ij}-f_jR_{ik}}+\frac{1}{(m-1)(m-2)}f_t\pa{R_{tk}\delta_{ij}-R_{tj}\delta_{ik}}-\frac{S}{(m-1)(m-2)}\pa{f_k\delta_{ij}-f_j \delta_{ik}}.
 \end{align}
 The $D$ tensor is skew-symmetric in the second and third indices (i.e. $D_{ijk}=-D_{ikj}$) and totally trace-free (i.e. $D_{iik}=D_{iki}=D_{kii}=0$).
Note that our convention for the tensor $D$ differs from that in \cite{CaoChen}.
A simple computation, using the definitions of the tensors involved, equation \eqref{GradRicSolEqComponents} and the fact that, for gradient Ricci solitons, the fundamental identity
 \begin{equation*}
   S_i = 2f_tR_{ti}
 \end{equation*}
 holds (see Section 4), shows that the tensor $D$ can be written in four equivalent ways:
    \begin{align}
   D_{ijk} &= \frac{1}{m-2}\pa{f_kR_{ij}-f_jR_{ik}}+\frac{1}{(m-1)(m-2)}f_t\pa{R_{tk}\delta_{ij}-R_{tj}\delta_{ik}}-\frac{S}{(m-1)(m-2)}\pa{f_k\delta_{ij}-f_j \delta_{ik}} \\  \nonumber &=\frac{1}{m-2}\pa{f_kR_{ij}-f_jR_{ik}}+\frac{1}{2(m-1)(m-2)}\pa{S_{k}\delta_{ij}-S_{j}\delta_{ik}}-\frac{S}{(m-1)(m-2)}\pa{f_k\delta_{ij}-f_j \delta_{ik}}\\ \nonumber &= \frac{1}{m-2}\pa{f_kA_{ij}-f_jA_{ik}} +\frac{1}{(m-1)(m-2)}f_t\pa{E_{tk}\delta_{ij}-E_{tj}\delta_{ik}}\\ \nonumber  &=\frac{1}{m-2}\pa{f_jf_{ik}-f_kf_{ij}}+\frac{1}{(m-1)(m-2)}f_t\pa{f_{tj}\delta_{ik}-f_{tk}\delta_{ij}}-\frac{\Delta f}{(m-1)(m-2)}\pa{f_j\delta_{ik}-f_k \delta_{ij}} .
 \end{align}

\section{Transformation laws  under a conformal change of the metric}\label{SecTrans}

Let $\riemanng{M}$ be a Riemannian manifold of dimension $m\geq 3$. The moving frame formalism is extremely useful in the calculation of the transformation laws of geometric tensors under a conformal change of the metric and in the derivation of commutation rules, as we shall see in the next section.
For the sake of completeness (see \cite{MasRigSet} for details)  we recall that, having fixed a (local) orthonormal coframe $\set{\theta^i}$, $i=1, \ldots, m$ with dual frame $\set{e_i}$, $i=1, \ldots, m$,  the corresponding \emph{Levi-Civita connection forms} $\set{\theta^i_j}$, $i, j =1, \ldots, m$ are the unique $1$-forms satisfying
\begin{align}
  &d\theta^i = -\theta^i_j \wedge
  \theta^j \quad \text{(first structure equations)}, \label{1_firstStructureEq} \\
  &\theta^i_j + \theta^j_i = 0. \label{1_skewsymmConnForm}
\end{align}
The \emph{curvature forms} $\set{\Theta^i_j}$, $i, j =1, \ldots, m$, associated to the coframe
are the $2$-forms defined through the  \emph{second structure equations}
\begin{equation}\label{1_secondStructureEq}
  {d\theta^i_j = -\theta^i_k \wedge \theta^k_j + \Theta^i_j.}
\end{equation}
They are skew-symmetric (i.e. $\Theta^i_j + \Theta^j_i = 0$) and they can be written as
\begin{equation}\label{1_def_forme_curvatura}
  \Theta^i_j = \frac{1}{2}R^i_{jkt}\theta^k \wedge \theta^t = \sum_{k<t}R^i_{jkt}\theta^k \wedge \theta^t,
\end{equation}
where $R^i_{jkt}$ are precisely the coefficients of the ($(1, 3)$-version of the) Riemann curvature tensor.

 The \emph{covariant derivative of a vector field} $X \in \mathfrak{X}(M)$ is defined as
\[
\nabla X = (dX^i + X^j\theta^i_j)\otimes e_i=X^i_k\theta^k \otimes e_i,
\]
while the \emph{covariant derivative of a $1$-form} $\omega$ is defined as
\[
\nabla \omega = (d\omega_i-w_j\theta^j_i)\otimes \theta^i= \omega_{ik}\theta^k \otimes \theta^i.
\]
The \emph{divergence} of the vector field $X\in\mathfrak{X}(M)$  is the trace of $\nabla X$, that is,
\begin{equation}\label{1_divergence}
  \operatorname{div}X = \operatorname{tr}\pa{\nabla X} = g\pa{\nabla_{e_i}X, e_i} = X_i^i.
\end{equation}
For a function $f\in \cinf$ we can write
\begin{equation}\label{DifferentialComponents}
df = f_i\theta^i,
\end{equation}
for some smooth coefficients $f_i\in\cinf$. The \emph{Hessian} of $f$, $\hess(f)$, is the $(0, 2)$-tensor defined as
\begin{equation}
  \hess(f) = \nabla df = f_{ij}\theta^j\otimes\theta^i,
\end{equation}
with
\begin{equation}\label{HessianComponents}
f_{ij}\theta^j = df_i - f_t\theta_i^t
\end{equation}
and
\[
f_{ij}=f_{ji}
\]
(see also next section).
The \emph{Laplacian} of $f$ is the trace of the Hessian, that is
\[
\Delta f = \operatorname{tr}(\hess(f)) = f_{ii}.
\]
Now we are ready to recall (and prove, in some cases) the transformation laws that will be useful in our computations.

We consider the conformal change of the metric (written in ``exponential form'')
\begin{equation}\label{eq_conformal_change_of_g}
\tilde{g}=e^{2u} g, \qquad u\in\cinf;
\end{equation}
$e^u$ is called the \emph{stretching factor} of the conformal change.
We use the superscript $\,\,\tilde{}\,\,$ to denote quantities related to the metric $\tilde{g}$.

It is obvious that  in the new metric $\tilde{g}$ the $1$-forms
\begin{equation}\label{2.2}
\widetilde{\theta}^{i}=e^u\theta^{i},\qquad i=1,...,m,
\end{equation}
give a local orthonormal coframe. It is easy to deduce that, if $du=u_{t}\theta^{t}$, the $1$-forms
\begin{equation}\label{ConfTransfConnectionForms}
\tilde{\theta}_{j}^{i}=\theta_{j}^{i}+u_j\theta
^{i}-u_i\theta^{j}
\end{equation}
are skew-symmetric and satisfy the first structure equation. Hence, they are
the connection forms relative to the coframe defined in \eqref{2.2}. A straightforward computation using the structure equations and \eqref{HessianComponents} shows that the curvature forms relative to the coframe \eqref{2.2} are
\begin{equation}\label{ConfChangeCurvatureForms}
  \tilde{\Theta}^i_j = \Theta^i_j +\sq{\pa{u_{jk}-u_ju_k}\delta^i_{t}-\pa{u_{ik}-u_iu_k}\delta^j_{t}-\abs{\nabla u}^2\delta^i_k\delta^j_t}\theta^k\wedge\theta^t.
\end{equation}
Equation \eqref{ConfChangeCurvatureForms} is the starting point for the next transformation laws that we list without further comments.
\begin{itemize}
    \item \textbf{Riemann curvature tensor}:
    \begin{equation}\label{Riemannexp}
      e^{2u}\tilde{R}_{ijkt}=R_{ijkt}+\pa{u_{jk}-u_ju_k}\delta_{it}-\pa{u_{jt}-u_ju_t}\delta_{ik}-\pa{u_{ik}-u_iu_k}\delta_{jt}+\pa{u_{it}-u_iu_t}\delta_{jk}-\abs{\nabla u}^2\pa{\delta_{ik}\delta_{jt}-\delta_{it}\delta_{jk}}.
    \end{equation}
    \begin{proof}
      The previous equation follows easily skew-symmetrizing the coefficients of the wedge products on the right hand side of \eqref{ConfChangeCurvatureForms}, and
recalling  equation \eqref{1_def_forme_curvatura}.
    \end{proof}
Tracing \eqref{Riemannexp} we get
  \item \textbf{Ricci tensor}:
  \begin{equation}\label{Ricciexp}
  \widetilde{\ricc} = \ricc -\pa{m-2}\hess\pa{u}+\pa{m-2}du\otimes du-\Delta u\, g-\pa{m-2}\abs{\nabla u}^2 g,
  \end{equation}
  that, in components, reads as
  \begin{equation}\label{RicciexpComponents}
  e^{2u}\widetilde{R}_{ij} = R_{ij} -\pa{m-2}u_{ij}+\pa{m-2}u_iu_j-\Delta u\,\delta_{ij}-\pa{m-2}\abs{\nabla u}^2\delta_{ij}.
  \end{equation}
  Tracing \eqref{Ricciexp} we deduce
  \item  \textbf{Scalar curvature}:
   \begin{equation}\label{scalarExp}
  e^{2 u}\tilde{S} = S -2\pa{m-1}\Delta u-\pa{m-1}\pa{m-2}\abs{\nabla u}^2.
  \end{equation}
  Next we derive the transformation laws for the
   \item \textbf{Covariant derivative of the Ricci tensor}:
  \begin{align}\label{NablaRicciexpComponents}
  e^{3u}\widetilde{R}_{ij, k} &= R_{ij, k} -\pa{m-2}u_{ijk}-\pa{u_{ttk}-2u_k \Delta u}\delta_{ij}\\ \nonumber &-\pa{2R_{ij}u_k+u_iR_{jk}+u_jR_{ik}}+u_t\pa{R_{ti}\delta_{jk}+R_{tj}\delta_{ik}} \\ \nonumber &+2(m-2)\pa{u_iu_{jk}+u_ju_{ik}+u_ku_{ij}}-(m-2)u_t\pa{u_{ti}\delta_{jk}+u_{tj}\delta_{ik}+2u_{tk}\delta_{ij}} \\ \nonumber &-4(m-2)u_iu_ju_k +(m-2)\abs{\nabla u}^2\pa{u_i\delta_{jk}+u_j\delta_{ik}+2u_k\delta_{ij}}.
  \end{align}
  \begin{proof}
    The definition of covariant derivative implies that
    \begin{equation}\label{CovDerivRicci}
      R_{ij, k}\theta^k = dR_{ij}-R_{tj}\theta^t_i-R_{it}\theta^t_j.
    \end{equation}
    Now equation \eqref{NablaRicciexpComponents} follows from \eqref{CovDerivRicci}, from the fact that
    \[
    e^{3u}\tilde{R}_{ij, k}\theta^k = d\pa{e^{2u}\tilde{R}_{ij}}-\pa{e^{2u}\tilde{R}_{tj}}\tilde{\theta^t_i}-\pa{e^{2u}\tilde{R}_{it}}\tilde{\theta^t_j}-\tilde{R}_{ij}d\pa{e^{2u}}
    \]
    and from \eqref{RicciexpComponents}.
  \end{proof}

   \item \textbf{Second Covariant derivative of the Ricci tensor}:
\begin{align}\label{ExpochangenablasquaredRicci}
  e^{4 u}\tilde{R}_{ij, kt} &= R_{ij, kt} -  (m-2)u_{ijkt}-u_{sskt}\delta_{ij}+3\pa{u_tu_{ssk}+u_ku_{sst}}\delta_{ij}-g\pa{\nabla u, \nabla\Delta u}\delta_{ij}\delta_{kt}\\\nonumber &+2\Delta u\pa{u_{kt}-4u_ku_t+\abs{\nabla u}^2\delta_{kt}}\delta_{ij}\\ \nonumber &+u_lR_{li, t}\delta_{jk}+u_lR_{lj, t}\delta_{ik}+u_lR_{il, k}\delta_{jt}+u_lR_{lj, k}\delta_{it}+u_lR_{ij, l}\delta_{kt}+ R_{il}u_{lt}\delta_{jk}+R_{jl}u_{lt}\delta_{ik}\\ \nonumber &-\pa{u_iR_{jk, t}+u_jR_{ik, t}+u_iR_{jt, k}+u_jR_{it, k}+3u_kR_{ij, t}+3u_tR_{ij, k}}\\ \nonumber &-\pa{u_{it}R_{jk}+u_{jt}R_{ik}+2u_{kt}R_{ij}}\\ \nonumber &+(m-2)\pa{2u_iu_{jkt}+u_iu_{jtk}+2u_ju_{ikt}+u_ju_{itk}+3u_ku_{ijt}+3u_tu_{ijk}}\\ \nonumber &+2(m-2)\pa{u_{ij}u_{kt}+u_{ik}u_{jt}+u_{jk}u_{it}}-(m-2)\pa{2u_lu_{lkt}\delta_{ij}+u_lu_{ljt}\delta_{ik}+u_lu_{lit}\delta_{jk}}\\ \nonumber &-(m-2)\pa{2u_{kl}u_{lt}\delta_{ij}+u_{jl}u_{lt}\delta_{ik}+u_{il}u_{lt}\delta_{jk}}\\ \nonumber
  &-\pa{R_{tl}u_lu_i\delta_{jk}+R_{tl}u_lu_j\delta_{ik}+3R_{il}u_lu_t\delta_{jk}+3R_{jl}u_lu_t\delta_{ik}}+\operatorname{Ric}\pa{\nabla u, \nabla u}\pa{\delta_{jk}\delta_{it}+\delta_{ik}\delta_{jt}}\\ \nonumber &+4\pa{u_iu_tR_{jk}+u_ju_tR_{ik}+2u_ku_tR_{ij}}+\pa{2u_iu_jR_{kt}+3u_iu_kR_{jt}+3u_ju_kR_{it}}\\
  \nonumber &-8(m-2)\pa{u_iu_ju_{tk}+u_iu_ku_{jt}+u_ju_ku_{it}+u_iu_tu_{jk}+u_ju_tu_{ik}+u_ku_tu_{ij}} \\\nonumber &-(m-2)\pa{u_lu_{ljk}\delta_{it}+u_lu_{lik}\delta_{jt}+u_lu_{ijl}\delta_{kt}}-\abs{\nabla u}^2\pa{R_{jk}\delta_{it}+R_{ik}\delta_{jt}+2R_{ij}\delta_{kt}} \\ \nonumber &-\pa{u_ju_lR_{lk}\delta_{it}+u_iu_lR_{lk}\delta_{jt}+u_iu_lR_{lj}\delta_{kt}+u_ju_lR_{li}\delta_{kt}+2u_ku_lR_{lj}\delta_{it}+2u_ku_lR_{li}\delta_{jt}}\\ \nonumber &+3(m-2)\pa{u_iu_lu_{lt}\delta_{jk}+u_ju_lu_{lt}\delta_{ik}+2u_ku_lu_{lt}\delta_{ij}+u_iu_lu_{lt}\delta_{jk}+u_ju_lu_{lt}\delta_{ik}+2u_ku_lu_{lt}\delta_{ij}}\\ \nonumber
  &+2(m-2)\pa{u_iu_lu_{lk}\delta_{jt}+u_ju_lu_{lk}\delta_{it}+u_iu_lu_{lj}\delta_{kt}+u_ju_lu_{li}\delta_{kt}+u_ku_lu_{li}\delta_{jt}+u_ku_lu_{lj}\delta_{it}}\\ \nonumber &+(m-2)\abs{\nabla u}^2\pa{u_{it}\delta_{jk}+u_{jt}\delta_{ik}+2u_{kt}\delta_{ij}+2u_{ij}\delta_{kt}+2u_{ik}\delta_{jt}+2u_{jk}\delta_{it}}\\ \nonumber &-(m-2)\hess\pa{u}\pa{\nabla u, \nabla u}\pa{\delta_{jk}\delta_{it}+\delta_{ik}\delta_{jt}+2\delta_{ij}\delta_{kt}}\\ \nonumber &+24(m-2)u_iu_ju_ku_t\\ \nonumber &-4(m-2)\abs{\nabla u}^2\pa{u_ju_k\delta_{it}+u_iu_k\delta_{jt}+u_iu_j\delta_{kt}+u_iu_t\delta_{jk}+u_ju_t\delta_{ik}+2u_ku_t\delta_{ij}}\\ \nonumber &+(m-2)\abs{\nabla u}^4\pa{\delta_{jk}\delta_{it}+\delta_{ik}\delta_{jt}+2\delta_{ij}\delta_{kt}}.
  \end{align}
    The proof of \eqref{ExpochangenablasquaredRicci} is just  a really long computation, similar to the one performed to obtain equation \eqref{NablaRicciexpComponents}.

   \item  \textbf{Differential of the scalar curvature}:
   \begin{equation}\label{NablascalarExp}
  e^{3 u}\tilde{S}_k = S_k -2\pa{m-1}u_{ttk}-2\pa{m-1}\pa{m-2}u_tu_{tk}-2\sq{S-2(m-1)\Delta u-(m-1)(m-2)\abs{\nabla u}^2}u_k.
  \end{equation}
  \begin{proof}
    It follows from the fact that $e^{3u}\tilde{S}_k\theta^k = e^{2u}d\tilde{S} = d\pa{e^{2u}\tilde{S}}-\tilde{S}d\pa{e^{2u}}$ and from \eqref{scalarExp}.
  \end{proof}

  \item  \textbf{Hessian of the scalar curvature}:
   \begin{align}\label{HessianscalarExp}
  e^{4 u}\tilde{S}_{kt} &= S_{kt} -2\pa{m-1}u_{sskt}-2\pa{m-1}\pa{m-2}u_{ks}u_{st}-2\pa{m-1}\pa{m-2}u_su_{skt}+6(m-1)\pa{u_ku_{sst}+u_tu_{ssk}}\\ \nonumber &+6(m-1)(m-2)\pa{u_su_{sk}u_t+u_su_{st}u_k} -3\pa{S_tu_k+S_ku_t}\\ \nonumber &-2\sq{S-2(m-1)\Delta u-(m-1)(m-2)\abs{\nabla u}^2}\pa{u_{kt}-4u_ku_t+\abs{\nabla u}^2\delta_{kt}}  \\ \nonumber &+\sq{g\pa{\nabla S, \nabla u}-2(m-1)g\pa{\nabla u, \nabla \Delta u}-2(m-1)(m-2)\operatorname{Hess}(u)\pa{\nabla u, \nabla u}}\delta_{kt}.
  \end{align}
  \begin{proof}
    Equation \eqref{HessianscalarExp} follows from the fact that $e^{4u}\tilde{S}_{kt}\theta^t = d\pa{e^{3u}\tilde{S}_k}-\tilde{S}_kd\pa{e^{3u}}-e^{3u}\tilde{S}_t\tilde{\theta}^t_k$ and from \eqref{scalarExp} and \eqref{NablascalarExp}. Alternatively, \eqref{HessianscalarExp} can be obtained tracing \eqref{ExpochangenablasquaredRicci} with respect to $i$ and $j$.
  \end{proof}
  Tracing \eqref{HessianscalarExp} and using \eqref{TracedThirdDerivFunctionRicci} (see next Section) we deduce
    \item  \textbf{Laplacian of the scalar curvature}:
    \begin{align}\label{LaplacianscalarExp}
      e^{4u}\tilde{\Delta}\tilde{S} &= \Delta S -2(m-1)\Delta^2u-2(m-1)(m-2)\abs{\hess(u)}^2\\ \nonumber &-2(m-1)(m-2)\ricc\pa{\nabla u, \nabla u}-4(m-1)(m-4)g\pa{\nabla u, \nabla\Delta u)}\\ \nonumber &-2(m-1)(m-2)(m-6)\hess(u)\pa{\nabla u, \nabla u}+(m-6)g\pa{\nabla S, \nabla u}-2S\Delta u +4(m-1)\pa{\Delta u}^2 \\ \nonumber &+2(m-1)(3m-10)\abs{\nabla u}^2\Delta u+2(m-1)(m-2)(m-4)\abs{\nabla u}^4-2(m-4)S\abs{\nabla u}^2.
    \end{align}

  \item \textbf{the Hessian of a function $f \in \cinf$}:
  \begin{equation}
    \widetilde{\operatorname{Hess}}(f)=\hess(f)-\pa{df\otimes du+ du \otimes df}+g\pa{\nabla f, \nabla u}g,
  \end{equation}
  which in components reads as
  \begin{equation}\label{HessianExpComp}
    e^{2u}\tilde{f}_{ij}=f_{ij}-\pa{f_iu_j+f_ju_i}+(f_tu_t)\delta_{ij}.
  \end{equation}
  \begin{proof}
    From $du=u_i\theta^i=\tilde{u}_i\tilde{\theta}^i$ we deduce that
    \begin{equation}\label{diffComponentsExpTransf}
    \tilde{u}_i = e^{-u}u_i.
    \end{equation}
    Now \eqref{HessianExpComp} follows from a straightforward computation using \eqref{diffComponentsExpTransf}, \eqref{HessianComponents} and \eqref{ConfTransfConnectionForms}.
  \end{proof}
  Tracing \eqref{HessianExpComp} we get
  \item \textbf{the Laplacian of a function $f \in \cinf$}:
   \begin{equation}\label{LaplacianExpComp}
    e^{2u}\tilde{\Delta}f=\Delta f+(m-2)g\pa{\nabla f, \nabla u} = f_{tt}+(m-2)f_tu_t.
  \end{equation}
\item \textbf{the third derivative of a function $f \in \cinf$}:
\begin{align}\label{thirdDerivFunctExpComp}
e^{3u}\tilde{f}_{ijk} &= f_{ijk} -2\pa{f_{ij}u_k+f_{ik}u_j+f_{jk}u_i}-\pa{f_iu_{jk}+f_ju_{ik}} +3\pa{f_iu_j+f_ju_i}u_k +2u_iu_jf_k \\ \nonumber &+u_t\pa{f_{tk}\delta_{ij}+f_{tj}\delta_{ik}+f_{ti}\delta_{jk}}+f_tu_{tk}\delta_{ij}-(f_tu_t)\pa{u_i\delta_{jk}+u_j\delta_{ik}+2u_k\delta_{ij}}-\abs{\nabla u}^2\pa{f_i\delta_{jk}+f_j\delta_{ik}};
\end{align}
in particular,
\begin{equation}\label{thirdDerivFunctExpCompTraced}
e^{3u}\tilde{f}_{ttk}=f_{ttk}-2\Delta f u_k +(m-2)\sq{f_tu_{tk}+u_tf_{tk}-2(f_tu_t)u_k}.
\end{equation}

\begin{proof}
By definition of covariant derivative we have
\[
\tilde{f}_{ijk}\tilde{\theta}^k = d \tilde{f}_{ij}-\tilde{f}_{tj}\tilde{\theta}^t_i-\tilde{f}_{it}\tilde{\theta}^t_j,
\]
which can be written as
\[
e^{3u}\tilde{f}_{ijk}\theta^k = e^{2u}d \tilde{f}_{ij}-e^{2u}\tilde{f}_{tj}\tilde{\theta}^t_i-e^{2u}\tilde{f}_{it}\tilde{\theta}^t_j= d\pa{e^{2u}\tilde{f}_{ij}}-\tilde{f}_{ij}d(e^{2u})-e^{2u}\tilde{f}_{tj}\tilde{\theta}^t_i-e^{2u}\tilde{f}_{it}\tilde{\theta}^t_j.
\]
Now equation \eqref{thirdDerivFunctExpComp} follows using \eqref{HessianExpComp}, \eqref{ConfTransfConnectionForms} and simplifying.
\end{proof}

  \item \textbf{Schouten tensor}:
  \begin{equation}\label{Schoutenexp}
   \tilde{\mathrm{A}} = \mathrm{A} - (m-2)\hess\pa{u} +  (m-2)du\otimes du - \pa{\frac{m-2}{2}}\abs{\nabla u}^2 g,
  \end{equation}
  which in components reads as
  \begin{equation}\label{SchoutenexpComponents}
e^{2 u}\tilde{A}_{ij} = A_{ij} - (m-2)u_{ij}+(m-2)u_iu_j-\pa{\frac{m-2}{2}}\abs{\nabla u}^2\delta_{ij}.
  \end{equation}
  The proof of \eqref{Schoutenexp} follows easily from the definition of the Schouten tensor and from \eqref{Ricciexp} and \eqref{scalarExp}.

 \item \textbf{Covariant derivative of the Schouten tensor}:
 \begin{align}\label{ExpochangenablaSchouten}
  e^{3u}\tilde{A}_{ij, k} &= A_{ij, k} -  (m-2)u_{ijk} + u_lA_{li}\delta_{jk}+u_lA_{lj}\delta_{ik}-\pa{u_iA_{jk}+u_jA_{ik}+2u_kA_{ij}}\\ \nonumber &+2(m-2)\pa{u_iu_{jk}+u_ju_{ik}+u_ku_{ij}} - (m-2)\pa{u_lu_{lk}\delta_{ij}+u_lu_{lj}\delta_{ik}+u_lu_{li}\delta_{jk}}\\ \nonumber &-4 (m-2)u_iu_ju_k +(m-2)\abs{\nabla u}^2\pa{u_i\delta_{jk}+u_j\delta_{ik}+u_k\delta_{ij}}.
\end{align}
\begin{proof}
    The definition of covariant derivative implies that
    \begin{equation}\label{CovDerivSchouten}
      A_{ij, k}\theta^k = dA_{ij}-A_{tj}\theta^t_i-A_{it}\theta^t_j.
    \end{equation}
    Now equation \eqref{ExpochangenablaSchouten} follows from \eqref{CovDerivSchouten}, from the fact that
    \[
    e^{3u}\tilde{A}_{ij, k}\theta^k = d\pa{e^{2u}\tilde{A}_{ij}}-\pa{e^{2u}\tilde{A}_{tj}}\tilde{\theta^t_i}-\pa{e^{2u}\tilde{A}_{it}}\tilde{\theta^t_j}-\tilde{A}_{ij}d\pa{e^{2u}}
    \]
    and from \eqref{SchoutenexpComponents}.
  \end{proof}

 \item \textbf{Second Covariant derivative of the Schouten tensor}:
\begin{align}\label{ExpochangenablasquaredSchouten}
  e^{4 u}\tilde{A}_{ij, kt} &= A_{ij, kt} -  (m-2)u_{ijkt}\\ \nonumber &+u_lA_{li, t}\delta_{jk}+u_lA_{lj, t}\delta_{ik}+u_lA_{il, k}\delta_{jt}+u_lA_{lj, k}\delta_{it}+u_lA_{ij, l}\delta_{kt}+ A_{il}u_{lt}\delta_{jk}+A_{jl}u_{lt}\delta_{ik}\\ \nonumber &-\pa{u_iA_{jk, t}+u_jA_{ik, t}+u_iA_{jt, k}+u_jA_{it, k}+3u_kA_{ij, t}+3u_tA_{ij, k}}\\ \nonumber &-\pa{u_{it}A_{jk}+u_{jt}A_{ik}+2u_{kt}A_{ij}}\\ \nonumber &+(m-2)\pa{2u_iu_{jkt}+u_iu_{jtk}+2u_ju_{ikt}+u_ju_{itk}+3u_ku_{ijt}+3u_tu_{ijk}}\\ \nonumber &+2(m-2)\pa{u_{ij}u_{kt}+u_{ik}u_{jt}+u_{jk}u_{it}}-(m-2)\pa{u_lu_{lkt}\delta_{ij}+u_lu_{ljt}\delta_{ik}+u_lu_{lit}\delta_{jk}}\\ \nonumber &-(m-2)\pa{u_{kl}u_{lt}\delta_{ij}+u_{jl}u_{lt}\delta_{ik}+u_{il}u_{lt}\delta_{jk}}\\ \nonumber
  &-\pa{A_{tl}u_lu_i\delta_{jk}+A_{tl}u_lu_j\delta_{ik}+3A_{il}u_lu_t\delta_{jk}+3A_{jl}u_lu_t\delta_{ik}}+\mathrm{A}\pa{\nabla u, \nabla u}\pa{\delta_{jk}\delta_{it}+\delta_{ik}\delta_{jt}}\\ \nonumber &+4\pa{u_iu_tA_{jk}+u_ju_tA_{ik}+2u_ku_tA_{ij}}+\pa{2u_iu_jA_{kt}+3u_iu_kA_{jt}+3u_ju_kA_{it}}\\
   \nonumber &-8(m-2)\pa{u_iu_ju_{tk}+u_iu_ku_{jt}+u_ju_ku_{it}+u_iu_tu_{jk}+u_ju_tu_{ik}+u_ku_tu_{ij}} \\
  \nonumber &-(m-2)\pa{u_lu_{ljk}\delta_{it}+u_lu_{lik}\delta_{jt}+u_lu_{ijl}\delta_{kt}}-\abs{\nabla u}^2\pa{A_{jk}\delta_{it}+A_{ik}\delta_{jt}+2A_{ij}\delta_{kt}} \\ \nonumber &-\pa{u_ju_lA_{lk}\delta_{it}+u_iu_lA_{lk}\delta_{jt}+u_iu_lA_{lj}\delta_{kt}+u_ju_lA_{li}\delta_{kt}+2u_ku_lA_{lj}\delta_{it}+2u_ku_lA_{li}\delta_{jt}}\\ \nonumber &+(m-2)\left(3u_iu_lu_{lt}\delta_{jk}+3u_ju_lu_{lt}\delta_{ik}+3u_ku_lu_{lt}\delta_{ij}+2u_iu_lu_{lk}\delta_{jt}+2u_iu_lu_{lj}\delta_{kt}+2u_ju_lu_{lk}\delta_{it}\right.\\ \nonumber &\quad+\left.2u_ku_lu_{lj}\delta_{it}+2u_ju_lu_{li}\delta_{kt}+2u_ku_lu_{li}\delta_{jt}+3u_lu_tu_{lk}\delta_{ij}+3u_lu_tu_{lj}\delta_{ik}+3u_lu_tu_{li}\delta_{jk}\right)\\ \nonumber &+\abs{\nabla u}^2(m-2)\pa{u_{it}\delta_{jk}+u_{jt}\delta_{ik}+u_{kt}\delta_{ij}+2u_{ij}\delta_{kt}+2u_{ik}\delta_{jt}+2u_{jk}\delta_{it}}\\ \nonumber &-(m-2)\hess\pa{u}\pa{\nabla u, \nabla u}\pa{\delta_{jk}\delta_{it}+\delta_{ik}\delta_{jt}+\delta_{ij}\delta_{kt}}\\ \nonumber &+24(m-2)u_iu_ju_ku_t\\ \nonumber &-4(m-2)\abs{\nabla u}^2\pa{u_ju_k\delta_{it}+u_iu_k\delta_{jt}+u_iu_j\delta_{kt}+u_iu_t\delta_{jk}+u_ju_t\delta_{ik}+u_ku_t\delta_{ij}}\\ \nonumber &+(m-2)\abs{\nabla u}^4\pa{\delta_{jk}\delta_{it}+\delta_{ik}\delta_{jt}+\delta_{ij}\delta_{kt}}.
  \end{align}
    The proof of \eqref{ExpochangenablasquaredSchouten} is just  a really long computation, similar to the one performed to obtain equation \eqref{CovDerivSchouten}.
\begin{rem}
  Equations \eqref{ExpochangenablaSchouten} and \eqref{ExpochangenablasquaredSchouten} can be also obtained from the corrisponding relations for the Ricci tensor, with the aid of \eqref{scalarExp}, \eqref{NablascalarExp} and \eqref{HessianscalarExp}.
\end{rem}

  \item \textbf{Weyl tensor} ($\pa{1, 3}$-version):
  \begin{equation}\label{Weylexp}
  e^{2u}\tilde{W}^i_{jkt} = W^i_{jkt}
    \end{equation}
    For the proof of \eqref{Weylexp} we refer to \cite{MasRigSet}, Chapter 2.
    \item \textbf{Cotton tensor}:
  \begin{equation}\label{Cottonlexp}
  e^{3u}\tilde{C}_{ijk} = C_{ijk}-(m-2)u_tW_{tijk}.
    \end{equation}
    \begin{proof}
     From the definition of the Cotton tensor and from \eqref{CovDerivSchouten} we have
      \[
      e^{3u}\tilde{C}_{ijk} = e^{3u}\tilde{A}_{ij, k}-e^{3u}\tilde{A}_{ik, j} = A_{ij, k}- A_{ik, j}-(m-2)\pa{u_{ijk}-u_{ikj}}+u_t\pa{A_{tj}\delta_{ik}-A_{tk}\delta_{ij}}+u_jA_{ik}-u_kA_{ij}.
      \]
      Equation \eqref{Cottonlexp} now follows using \eqref{commutatioThirdDerFunctWeilSchouten} (see next section) and simplifying.
    \end{proof}

 \item \textbf{Bach tensor}:
   \begin{equation}\label{BachExpComp}
    e^{4u}\tilde{B}_{ij}=B_{ij} + (m-4)\sq{u_tu_kW_{tikj}+\frac{1}{m-2}\pa{C_{ijt}+C_{jit}}u_t}.
  \end{equation}
  \begin{proof}(sketch)
    From the definition of the Bach tensor we have
    \begin{equation*}
      e^{4u}\tilde{B}_{ij} = \frac{e^{4u}}{m-2}\sq{\tilde{C}_{ijt, t}+\tilde{R}_{kl}\tilde{W}_{ikjl}}=\frac{1}{m-2}\sq{e^{4u}\pa{\tilde{A}_{ij, tt}-\tilde{A}_{it, jt}}+\pa{e^{4u}\tilde{R}_{kl}\tilde{W}_{ikjl}}}.
    \end{equation*}
    The second term on the right hand side is easily computed using \eqref{RicciexpComponents} and \eqref{Weylexp}:
    \[
    e^{4u}\tilde{R}_{kl}\tilde{W}_{ikjl}=R_{kl}W_{ikjl}-(m-2)u_{kl}W_{ikjl}+(m-2)u_ku_lW_{ikjl}.
    \]
    As far as the first term is concerned, we trace \eqref{ExpochangenablasquaredSchouten} with respect to the third and fourth indices and then with respect to the second and the fourth, then we simplify with a lot of patience. Summing up we finally obtain \eqref{BachExpComp}.
  \end{proof}

Using the previous relations (in particular equations \eqref{HessianExpComp}, \eqref{RicciexpComponents}, \eqref{NablaRicciexpComponents}, \eqref{NablascalarExp}) and the fact that $e^u\tilde{f}_t=f_t$, we can prove, with a really long but straightforward calculation, the tranformation laws for $D$ and $\nabla D$.
\item \textbf{$D$ tensor}: If $\pa{M, \tilde{g}, f, \lambda}$ is a soliton structure, then
   \begin{align}\label{DExpComp}
    e^{3u}\tilde{D}_{ijk}&= \frac{1}{m-2}\pa{f_kR_{ij}-f_jR_{ik}}+\frac{1}{(m-1)(m-2)}f_t\pa{R_{tk}\delta_{ij}-R_{tj}\delta_{ik}}-\frac{S}{(m-1)(m-2)}\pa{f_k\delta_{ij}-f_j \delta_{ik}}\\ \nonumber &+ u_i\pa{f_ku_j-f_ju_k} + f_ju_{ik}-f_ku_{ij} \\ \nonumber &+\frac{1}{m-1}\sq{\Delta u\pa{f_k\delta_{ij}-f_j\delta_{ik}}+f_t\pa{u_{tj}\delta_{ik}-u_{tk}\delta_{ij}}+ \pa{f_tu_t}\pa{u_k\delta_{ij}-u_j\delta_{ik}}-\abs{\nabla u}^2\pa{f_k\delta_{ij}-f_j\delta_{ik}}}.
\end{align}
Viceversa, if $\pa{M, g, f, \lambda}$ is a soliton structure, then we have
   \begin{align}\label{DExpCompStartingFrom}
    e^{3u}&\set{\frac{1}{m-2}\pa{\tilde{f}_k\tilde{R}_{ij}-\tilde{f}_j\tilde{R}_{ik}}+\frac{1}{(m-1)(m-2)}\tilde{f}_t\pa{\tilde{R}_{tk}\delta_{ij}-\tilde{R}_{tj}\delta_{ik}}-\frac{\tilde{S}}{(m-1)(m-2)}\pa{\tilde{f}_k\delta_{ij}-\tilde{f}_j \delta_{ik}}}\\ \nonumber &= D_{ijk}+ u_i\pa{f_ku_j-f_ju_k} + f_ju_{ik}-f_ku_{ij} \\ \nonumber &+\frac{1}{m-1}\sq{\Delta u\pa{f_k\delta_{ij}-f_j\delta_{ik}}+f_t\pa{u_{tj}\delta_{ik}-u_{tk}\delta_{ij}}+ \pa{f_tu_t}\pa{u_k\delta_{ij}-u_j\delta_{ik}}-\abs{\nabla u}^2\pa{f_k\delta_{ij}-f_j\delta_{ik}}}.
\end{align}

\item \textbf{Covariant derivative of the $D$ tensor}: If $\pa{M, \tilde{g}, f, \lambda}$ is a soliton structure, then
 \begin{align}\label{CovDerivDExpComp}
    e^{4u}\tilde{D}_{ijk, t}&= \frac{1}{m-2}\sq{\pa{f_{kt}R_{ij}-f_{jt}R_{ik}}+\pa{f_kR_{ij, t}-f_jR_{ik, t}}}\\ \nonumber &+\frac{1}{(m-1)(m-2)}\sq{f_{st}\pa{R_{sk}\delta_{ij}-R_{sj}\delta_{ik}}+f_s\pa{R_{sk, t}\delta_{ij}-R_{sj, t}\delta_{ik}}}\\ \nonumber &-\frac{1}{(m-1)(m-2)}\sq{S_t\pa{f_k\delta_{ij}-f_j\delta_{ik}}+S\pa{f_{kt}\delta_{ij}-f_{jt}\delta_{ik}}} \\ \nonumber &+\pa{u_{ik}f_{jt}-u_{ij}f_{kt}} + \pa{u_{ikt}f_j-u_{ijt}f_k} +\pa{u_iu_jf_{kt}-u_iu_kf_{jt}}+\frac{1}{m-1}u_{sst}\pa{f_k\delta_{ij}-f_j\delta_{ik}}\\ \nonumber &-\frac{3}{m-2}u_t\pa{f_kR_{ij}-f_jR_{ik}}-\frac{1}{m-1}f_s\pa{u_{skt}\delta_{ij}-u_{sjt}\delta_{ik}}-\frac{1}{m-2}f_t\pa{u_kR_{ij}-u_jR_{ik}}\\ \nonumber &+\frac{1}{m-2}\pa{f_su_s}\pa{R_{ij}\delta_{kt}-R_{ik}\delta_{jt}}+\frac{1}{m-2}u_sR_{si}\pa{f_k\delta_{jt}-f_j\delta_{kt}}+\frac{1}{m-2}u_s\delta_{it}\pa{f_kR_{sj}-f_jR_{sk}} \\ \nonumber &+3u_t\pa{f_ku_{ij}-f_ju_{ik}}+f_t\pa{u_ku_{ij}-u_ju_{ik}}-\pa{f_su_s}\pa{u_{ij}\delta_{kt}-u_{ik}\delta_{jt}}+\pa{f_su_s}u_i\pa{u_j\delta_{kt}-u_k\delta_{jt}}\\ \nonumber &+\frac{1}{m-1}\pa{\Delta u-\abs{\nabla u}^2}\pa{f_{kt}\delta_{ij}-f_{jt}\delta_{ik}}-5u_iu_t\pa{u_jf_k-u_kf_j}\\ \nonumber &-\frac{3}{m-1}\pa{\Delta u-\abs{\nabla u}^2}u_t\pa{f_k\delta_{ij}-f_j\delta_{ik}}-\frac{1}{m-1}\pa{\Delta u-\abs{\nabla u}^2}f_t\pa{u_k\delta_{ij}-u_j\delta_{ik}}\\ \nonumber &+\frac{\pa{f_su_s}}{m-1}\Delta u\,\pa{\delta_{ij}\delta_{kt}-\delta_{ik}\delta_{jt}}+\abs{\nabla u}^2u_i\pa{f_k\delta_{jt}-f_j\delta_{kt}}+\abs{\nabla u}^2\delta_{it}\pa{u_jf_k-u_kf_j} \\ \nonumber &-\frac{1}{m-2}u_i\pa{f_kR_{jt}-f_jR_{kt}}-\frac{1}{m-2}R_{it}\pa{u_jf_k-u_kf_j}+2u_i\pa{f_ku_{jt}-f_ju_{kt}}+2u_{it}\pa{u_jf_k-u_kf_j} \\ \nonumber &-u_su_{si}\pa{f_k\delta_{jt}-f_j\delta_{kt}}-u_s\delta_{it}\pa{f_ku_{sj}-f_ju_{sk}} - \frac{2}{m-1}u_su_{st}\pa{f_k\delta_{ij}-f_j\delta_{ik}} \\ \nonumber &-\frac{1}{m-1}f_{st}\pa{u_{ks}\delta_{ij}-u_{js}\delta_{ik}}+\frac{1}{m-1}u_sf_{st}\pa{u_k\delta_{ij}-u_j\delta_{ik}}-\frac{3}{(m-1)(m-2)}u_tf_s\pa{R_{sk}\delta_{ij}-R_{sj}\delta_{ik}} \\ \nonumber &-\frac{1}{(m-1)(m-2)}f_sR_{st}\pa{u_k\delta_{ij}-u_j\delta_{ik}}+\frac{3}{m-1}f_su_t\pa{u_{sk}\delta_{ij}-u_{sj}\delta_{ik}} \\ \nonumber &-\frac{4}{m-1}\pa{f_su_s}u_t\pa{u_k\delta_{ij}-u_j\delta_{ik}}+\frac{1}{m-1}\pa{f_su_s}\pa{u_{kt}\delta_{ij}-u_{jt}\delta_{ik}}+\frac{2}{m-1}f_su_{st}\pa{u_k\delta_{ij}-u_j\delta_{ik}} \\ \nonumber &+\frac{1}{(m-1)(m-2)}\ricc\pa{\nabla f, \nabla f}\pa{\delta_{kt}\delta_{ij}-\delta_{jt}\delta_{ik}}-\frac{1}{m-1}\hess(u)\pa{\nabla u, \nabla f}\pa{\delta_{kt}\delta_{ij}-\delta_{jt}\delta_{ik}}\\ \nonumber &+\frac{3}{(m-1)(m-2)}Su_t\pa{f_k\delta_{ij}-f_j\delta_{ik}} + \frac{1}{(m-1)(m-2)}Sf_t\pa{u_k\delta_{ij}-u_j\delta_{ik}} \\ \nonumber &-\frac{1}{(m-1)(m-2)}\pa{f_su_s} S \pa{\delta_{kt}\delta_{ij}-\delta_{jt}\delta_{ik}}.
    \end{align}
%In particular, tracing with respect to $k$ and $t$ we deduce that
% \begin{align}\label{DivergenceDExpComp}
%    e^{2u}\tilde{D}_{ijk, k}&=

\item \textbf{Covariant derivative of a vector field and Lie derivative of the metric} (see \cite{MasRigSet}, Lemma 2.4)

 \begin{lemma}\label{LE_conformalchangeLieDeriv}
    Let $X \in \mathfrak{X}\pa{M}$ be a vector field on the Riemannian manifold $\riemanng{M}$, and let $\tilde{g}=e^{2u}g$, a conformally deformed metric. Then
    \begin{equation}\label{eq_conformalchangeLieDeriv}
      \mathcal{L}_X\tilde{g} = e^{2u}\sq{\mathcal{L}_Xg + 2 g\pa{X, \nabla u}g}.
    \end{equation}
  \end{lemma}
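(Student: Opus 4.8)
The plan is to avoid the Levi-Civita connection entirely and exploit the purely tensorial nature of the Lie derivative. Recall that for any symmetric $(0,2)$-tensor $T$ and any $Y, Z \in \cvett$ one has the connection-free expression
\[
(\mathcal{L}_X T)(Y,Z) = X\pa{T(Y,Z)} - T\pa{[X,Y],Z} - T\pa{Y,[X,Z]},
\]
which follows from the Leibniz rule for $\mathcal{L}_X$ together with $\mathcal{L}_X Y = [X,Y]$. The advantage of this formula is that the conformal factor $e^{2u}$ can be handled by ordinary differentiation, with no transformation law for the connection forms required.

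First I would apply this identity to $T = \tilde{g} = e^{2u}g$. The two bracket terms carry the factor $e^{2u}$ outside unchanged, since $\tilde{g}\pa{[X,Y],Z} = e^{2u}g\pa{[X,Y],Z}$ and similarly for the last term. The only new contribution comes from the first term, where the Leibniz rule gives
\[
X\pa{e^{2u}g(Y,Z)} = 2e^{2u}X(u)\,g(Y,Z) + e^{2u}X\pa{g(Y,Z)}.
\]
Collecting the terms, the combination $e^{2u}\sq{X(g(Y,Z)) - g([X,Y],Z) - g(Y,[X,Z])}$ reconstructs precisely $e^{2u}(\mathcal{L}_X g)(Y,Z)$, by the same connection-free formula now applied to $g$ itself. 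The residual term is $2e^{2u}X(u)\,g(Y,Z)$, and here I would use $X(u) = du(X) = g(\nabla u, X)$ to rewrite it as $2e^{2u} g(X, \nabla u)\,g(Y,Z)$. Since $Y$ and $Z$ are arbitrary, this yields the claimed identity $\mathcal{L}_X\tilde{g} = e^{2u}\sq{\mathcal{L}_X g + 2 g(X, \nabla u) g}$.

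There is no genuine obstacle in this argument: everything reduces to the Leibniz rule and the identification of $X(u)$ with $g(X, \nabla u)$. If one instead insisted on the moving-frame route — writing $(\mathcal{L}_X g)_{ij} = X_{ij} + X_{ji}$ in the orthonormal coframe and passing to the tilded frame via \eqref{ConfTransfConnectionForms} and \eqref{diffComponentsExpTransf} — the computation would still close, but one would have to track the connection-form correction $u_j\theta^i - u_i\theta^j$ with some care. The coordinate-free computation sidesteps this bookkeeping entirely, so it is the route I would recommend.
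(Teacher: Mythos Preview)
Your argument is correct. The connection-free formula for the Lie derivative of a covariant tensor is standard, and applying it to $T=\tilde g=e^{2u}g$ with the ordinary Leibniz rule for $X(e^{2u}g(Y,Z))$ yields exactly the claimed identity after the identification $X(u)=g(X,\nabla u)$.

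Your route is genuinely different from the paper's. The paper works in the moving-frame formalism: it writes $\tilde X^i=e^uX^i$ in the tilded orthonormal frame, computes the components $\tilde X^i_k$ of $\tilde\nabla X$ via the transformed connection forms \eqref{ConfTransfConnectionForms}, obtains the intermediate identity $\tilde X^i_k = X^i_k + X^iu_k + X^tu_t\delta_{ik} - u_iX^k$, symmetrizes, and then uses $(\mathcal L_X g)_{ij}=X_{ij}+X_{ji}$. Your approach sidesteps the connection entirely, which is conceptually appropriate since the Lie derivative does not depend on it, and is certainly the shortest path to this particular lemma. The trade-off is that the paper's computation produces, as a byproduct, the component formula \eqref{tildeXik} for $\tilde X^i_k$, which is then immediately reused to derive the transformation laws for the divergence \eqref{divergenzatilde} and for the second covariant derivative of a vector field \eqref{secondCovDerivVFExp}; your argument, while cleaner in isolation, does not deliver those intermediate pieces.
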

  \begin{proof}
  Let $\set{e_i}$, $i=1, \ldots, m$ be the frame dual to the local coframe $\set{\theta^i}$. From \eqref{2.2} we deduce that   $\tilde{e}_i=e^{-u}e_i$; moreover,
    \begin{equation}\label{XitildeXi}
    X=X^ie_i = \tilde{X}^i\tilde{e}_i,
    \end{equation}
    thus
    \begin{equation}
      \tilde{X}^i=e^u X^i.
    \end{equation}
    From the definition of covariant derivative of a vector field we have
    \begin{equation}\label{nablaXnablatildeX}
      \nabla X =X^i_k\theta^k \otimes e_i,\quad \tilde{\nabla} X = \tilde{X}^i_k\tilde{\theta}^k \otimes \tilde{e}_i,
    \end{equation}
    with $X^i_k\theta^k=(dX^i + X^j\theta^i_j)$ and $\tilde{X}^i_k\tilde{\theta}^k=(d\tilde{X}^i + \tilde{X}^j\tilde{\theta}^i_j)$.
   A computation using \eqref{nablaXnablatildeX} and \eqref{ConfTransfConnectionForms} now shows that
    \begin{equation}\label{tildeXik}
      \tilde{X}^i_k = X^i_k + \pa{X^iu_k+X^ju_j\delta_{ik}-u_iX^k},
    \end{equation}
   which implies
   \begin{equation}\label{tildeXiktildeXki}
      \tilde{X}^i_k + \tilde{X}^k_i = X^i_k +X^k_i + 2X^tu_t\delta_{ik}.
   \end{equation}
   Equation \eqref{eq_conformalchangeLieDeriv} now follows easily from \eqref{tildeXiktildeXki} and from the fact that
   \[
   \pa{\mathcal{L}_Xg}_{ij} = X^i_j+X^j_i = X_{ij}+X_{ji}.
   \]
  \end{proof}
  From equation \eqref{tildeXiktildeXki} we  deduce, tracing with respect to $i$ and $k$,
  \item \textbf{Divergence of a vector field $X\in \cinf$}:
  \begin{equation}\label{divergenzatilde}
    \tilde{\diver} X = \diver X + m g\pa{X, \nabla u}.
  \end{equation}
  Finally, from equation \eqref{tildeXik} we can obtain the following transformation law for the second covariant derivative of a vector field $X \in \mathfrak{X}\pa{M}$:
  \begin{align}\label{secondCovDerivVFExp}
  e^u\tilde{X}_{ijk} &= X_{ijk} +\pa{X_iu_{jk}-X_ju_{ik}}-\pa{X_{jk}+X_{kj}}u_i-\pa{X_iu_j-X_ju_i}u_k+\pa{X_tu_{tk}+u_tX_{tk}}\delta_{ij} \\ \nonumber &+u_t\pa{X_{it}\delta_{jk}+X_{tj}\delta_{ik}} + \pa{X_tu_t}\pa{u_j\delta_{ik}-u_i\delta_{jk}}+\abs{\nabla u}^2\pa{X_i\delta_{jk}-X_j\delta_{ik}};
  \end{align}
  in particular,
  \begin{equation}
  e^u\tilde{X}_{ttk} = X_{ttk} +m\pa{X_tu_{tk}+u_tX_{tk}}.
  \end{equation}

  \begin{rem}
  If the vector field $X$ is the gradient of a function with respect to the metric $\tilde{g}$, i.e. $X = \tilde{\nabla} f = e^{-2u}\nabla f$, it is not hard to verify that \eqref{secondCovDerivVFExp} becomes equation \eqref{thirdDerivFunctExpComp} .
  \end{rem}

\end{itemize}

\section{Commutation rules}\label{SecCommy}

%
%The moving frame formalism is extremely useful in the calculation of commutation rules of geometric tensors (see \cite{MasRigSet} for details). We recall here the commutation rules that will be useful in our computations.

In this section we compute commutation rules of covariant derivatives of functions, vector fields and of the geometric tensors introduced in Section \ref{SecDef}. Some of these results are well-known in the literature, some already appeared in \cite[Section 4]{CMMRGet} or in \cite{MasRigSet}, for instance, while for many of them we are not aware of any good, exhaustive reference. We collect all of them here for the sake of completeness.
We begin with
\begin{lemma}\label{LemmaCommRulesFunctions} If $f\in \cinf$ then:
\begin{align}
  f_{ij} &= f_{ji}; \label{SecondDerivFunction}\\ f_{ijk} &= f_{jik}; \label{CovDerivSecondDerivFct}\\ f_{ijk} &= f_{ikj}+f_tR_{tijk}; \label{ThirdDerivFunctionRiem}\\f_{ijk} &= f_{ikj}+f_tW_{tijk}+\frac{1}{m-2}\pa{f_tR_{tj}\delta_{ik}-f_tR_{tk}\delta_{ij}+f_jR_{ik}-f_kR_{ij}}\label{ThirdDerivFunctionWeyl}\\\nonumber &-\frac{S}{(m-1)(m-2)}\pa{f_j\delta_{ik}-f_k\delta_{ij}}; \\ f_{ijk} &= f_{ikj}+f_tW_{tijk}+\frac{1}{m-2}\pa{f_tA_{tj}\delta_{ik}-f_tA_{tk}\delta_{ij}+f_jA_{ik}-f_kA_{ij}};\label{commutatioThirdDerFunctWeilSchouten}\\f_{ijkt} &= f_{ijtk}+f_{il}R_{ljkt}+f_{jl}R_{likt};  \label{FourthDerivFunctionRiem}
  \\ f_{ijkt} &= f_{ikjt} +f_{st}R_{sijk}+f_sR_{sijk, t}; \label{ThirdDerivinfourth}
  \\ f_{ijkt} &= f_{ktij} + f_{is}R_{skjt}+f_{js}R_{skit}+f_{ks}R_{sijt}+f_{ts}R_{sijk}+f_s\pa{R_{sijk, t}-R_{skti, j}}. \label{Function12with34}
\end{align}
In particular, tracing  \eqref{ThirdDerivFunctionRiem} and \eqref{Function12with34} it follows that
\begin{align}
  f_{itt} &= f_{tti}+f_tR_{ti}; \label{TracedThirdDerivFunctionRicci}\\
  f_{ijtt} &= f_{ttij}+f_{it}R_{tj}+f_{jt}R_{ti}-2f_{st}R_{isjt}+f_t\pa{R_{tj, i}+R_{ti, j}}-f_tR_{ij, t};  \label{TracedFourthDerivFct}\\ f_{ijtt} &=f_{ttij}+f_{it}R_{tj}+f_{jt}R_{ti}-2f_{st}R_{isjt}+f_tR_{ij, t}-f_t\pa{R_{sitj, s}+R_{sjti, s}}.  \label{TracedFourthDerivFctSecondVersion}
\end{align}
\end{lemma}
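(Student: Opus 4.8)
The plan is to establish all eleven commutation identities in Lemma \ref{LemmaCommRulesFunctions} by the standard moving-frame bootstrap, starting from the torsion-free and symmetry properties of the connection and then iterating covariant differentiation. First I would prove \eqref{SecondDerivFunction}, $f_{ij}=f_{ji}$, directly from the definition \eqref{HessianComponents}: differentiating $df=f_i\theta^i$ and using the first structure equation \eqref{1_firstStructureEq} together with the skew-symmetry \eqref{1_skewsymmConnForm} of the connection forms shows that the antisymmetric part of $f_{ij}$ measures the torsion, which vanishes for the Levi-Civita connection. The identity \eqref{CovDerivSecondDerivFct}, $f_{ijk}=f_{jik}$, then follows immediately by covariantly differentiating \eqref{SecondDerivFunction} (symmetry in the first two slots is preserved under $\nabla$, since one differentiates a tensor that is already symmetric there).

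Next I would derive the fundamental Ricci-identity commutators. For \eqref{ThirdDerivFunctionRiem} the idea is to compute $f_{ijk}\theta^k = df_{ij}-f_{tj}\theta^t_i-f_{it}\theta^t_j$, take the exterior derivative, and invoke the second structure equation \eqref{1_secondStructureEq} together with \eqref{1_def_forme_curvatura}; the curvature $2$-forms produce exactly the term $f_tR_{tijk}$ governing the failure of commutativity in the last two indices. Equations \eqref{ThirdDerivFunctionWeyl} and \eqref{commutatioThirdDerFunctWeilSchouten} are then purely algebraic rewritings of \eqref{ThirdDerivFunctionRiem}: I would substitute the Weyl decomposition \eqref{Riemann_Weyl} (respectively the Schouten form \eqref{Riemann_Weyl_Schouten}) for $R_{tijk}$, contract the Kronecker deltas against $f_t$, and collect terms. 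The fourth-order identities \eqref{FourthDerivFunctionRiem} and \eqref{ThirdDerivinfourth} arise by the same mechanism applied one order higher: differentiating \eqref{ThirdDerivFunctionRiem} (or \eqref{CovDerivSecondDerivFct}) and commuting, where now \emph{both} pairs of indices can generate curvature terms, so the Hessian $f_{il}$ gets contracted against two Riemann tensors in \eqref{FourthDerivFunctionRiem}, and in \eqref{ThirdDerivinfourth} one picks up both $f_{st}R_{sijk}$ and the derivative term $f_sR_{sijk,t}$ from differentiating the curvature itself.

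The genuinely delicate identity is \eqref{Function12with34}, which commutes the \emph{first} pair of indices past the \emph{second} pair in a fourth covariant derivative. The strategy is to route through an intermediate ordering: apply \eqref{ThirdDerivinfourth} to move from $f_{ijkt}$ toward $f_{ikjt}$, then use \eqref{CovDerivSecondDerivFct} to swap the leading symmetric slot, then commute again, carefully tracking the curvature contributions and their covariant derivatives at each swap. The main obstacle here is bookkeeping: each transposition spawns a Hessian-times-Riemann term and, when the index being moved sits inside an already-differentiated curvature, a $\nabla R$ term, so the antisymmetrized combination $f_s\pa{R_{sijk,t}-R_{skti,j}}$ emerges only after repeated use of the second Bianchi identity to consolidate the derivative-of-curvature pieces. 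Finally, the three traced consequences \eqref{TracedThirdDerivFunctionRicci}, \eqref{TracedFourthDerivFct} and \eqref{TracedFourthDerivFctSecondVersion} follow by setting $j=t$ (respectively $k=t$) and summing: in \eqref{ThirdDerivFunctionRiem} the trace $R_{titk}=R_{ik}$ reproduces the Ricci term, while in \eqref{Function12with34} the same contraction, combined with the second Bianchi identity and the contracted Bianchi identity $R_{sitj,s}$, yields the two stated forms, the difference between \eqref{TracedFourthDerivFct} and \eqref{TracedFourthDerivFctSecondVersion} being merely which Bianchi rearrangement one applies to the $\nabla R$ terms.
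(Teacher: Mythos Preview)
Your proposal is correct and follows essentially the same route as the paper: symmetry of the Hessian from the first structure equation, the Ricci identity \eqref{ThirdDerivFunctionRiem} from the second structure equation, the Weyl/Schouten rewritings by pure algebra, and the higher-order identities by iterated differentiation and commutation. One small overstatement: for \eqref{Function12with34} the combination $f_s\pa{R_{sijk,t}-R_{skti,j}}$ falls out directly from the chain \eqref{ThirdDerivinfourth}~$\to$~\eqref{CovDerivSecondDerivFct}~$\to$~\eqref{FourthDerivFunctionRiem}~$\to$~\eqref{ThirdDerivinfourth}, with no appeal to the second Bianchi identity needed at that stage; Bianchi enters only when passing between the two traced forms \eqref{TracedFourthDerivFct} and \eqref{TracedFourthDerivFctSecondVersion}.
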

\begin{rem}
  Clearly Lemma \ref{LemmaCommRulesFunctions} still works if $f$ is at least of class $C^4(M)$.
\end{rem}
\begin{proof}
Let $df = f_i\theta^i$. Differentiating and using the structure equations we get
\begin{align*}
0&=df_i \wedge \theta^i + f_i d\theta^i = (f_{ij}\theta^j +
f_k\theta^k_i)\wedge\theta^i - f_i\theta^i_k \wedge \theta^k \\&=
f_{ij}\theta^j \wedge \theta^i\\&= \frac 12 \pa{f_{ij}-f_{ji}}\theta^j \wedge \theta^i,
\end{align*}
thus
\[
0= \sum_{1 \leq j <i \leq m}(f_{ij}-f_{ji})\theta^j \wedge \theta^i;
\]
since $\set{\theta^j \wedge \theta^i}$  $\pa{1 \leq j < i \leq m}$
is a basis for the $2$-forms we get equation \eqref{SecondDerivFunction}. Equation \eqref{CovDerivSecondDerivFct} follows taking the covariant derivative of \eqref{SecondDerivFunction}. By definition of covariant derivative
\begin{equation}\label{1_derivate_terze}
  f_{ijk}\theta^k = df_{ij} - f_{kj}\theta^k_i - f_{ik}\theta^k_j.
\end{equation}

Differentiating equation \eqref{HessianComponents} and using the structure
equations we get
\begin{align*}
  df_{ik}\wedge\theta^k - f_{ij}\theta^j_k \wedge\theta^k &= - df_t
\wedge \theta^t_i + f_k\theta^k_t \wedge \theta^t_i - f_k\Theta^k_i
= \\ &=-(f_{tk}\theta^k + f_k\theta^k_t) \wedge\theta^t_i +
f_k\theta^k_t \wedge \theta^t_i - \frac{1}{2}f_kR^k_{ijt} \theta^j
\wedge \theta^t,
\end{align*}
thus
\[
(df_{ik} - f_{tk}\theta^t_i - f_{it}\theta^t_k)\wedge \theta^k
=-\frac{1}{2}f_t R^t_{ijk}\theta^j \wedge \theta^k,
\]
and, by \eqref{1_derivate_terze},
\[
f_{ikj}\theta^j \wedge \theta^k = -\frac{1}{2} f_t R^t_{ijk}
\theta^j \wedge \theta^k.
\]
Skew-symmetrizing we get
\[
\frac{1}{2}(f_{ikj}-f_{ijk})\theta^j \wedge \theta^k =
-\frac{1}{2}f_tR^t_{ijk}\theta^j \wedge\theta^k,
\]
that is \eqref{ThirdDerivFunctionRiem}. Equations \eqref{ThirdDerivFunctionWeyl} and \eqref{commutatioThirdDerFunctWeilSchouten} follow easily from \eqref{ThirdDerivFunctionRiem}, using the definitions of the Weyl tensor and of the Schouten tensor (see Section 2). To prove \eqref{FourthDerivFunctionRiem} we start from \eqref{ThirdDerivFunctionRiem} and we take the covariant derivative to deduce
    \begin{equation}\label{covDerivY3rdComm}
      f_{ijkt} - f_{ikjt} = f_{st}R_{sijk} + f_sR_{sijk, \,t}.
    \end{equation}
    Differentiating both sides of \eqref{1_derivate_terze}, using the structure equations and \eqref{1_derivate_terze} itself, we arrive at
    \[
    f_{ijkl} \theta^l \wedge \theta^k = -\frac 12 \pa{f_{tj}R_{tilk} + f_{it}R_{tjlk}} \theta^l \wedge \theta^k,
    \]
    from which, interchanging $k$ and $l$ and adding, we have the thesis. Equation \eqref{Function12with34} now follows using all the previous relations, starting from \eqref{ThirdDerivinfourth} .
\end{proof}

For the components of a vector field and for their covariant derivatives the commutation relations are similar to the ones proved for functions in Lemma \ref{LemmaCommRulesFunctions}; in particular we have the following
\begin{lemma}[Lemma 2.1 in \cite{MasRigRim}]\label{CommutationsForVectorFields}
Let $X \in \mathfrak{X}(M)$ be a vector field. Then we have
\begin{eqnarray}
X_{ijk}-X_{ikj} &=& X_tR_{tijk}; \\ X_{ijkl}-X_{ikjl} &=& R_{tijk}X_{tl}+R_{tijk, l}X_t; \\ X_{ijkl}-X_{ijlk} &=& R_{tikl}X_{tj}+R_{tjkl}X_{it}.
\end{eqnarray}
\end{lemma}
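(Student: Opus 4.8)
The plan is to reproduce, almost verbatim, the moving frame computations carried out for functions in Lemma \ref{LemmaCommRulesFunctions}, the only conceptual difference being that the first covariant derivative $X_{ij}$ of a vector field need not be symmetric in $i$ and $j$ --- but this symmetry is never actually used in the derivation of the commutation rules \eqref{ThirdDerivFunctionRiem} and \eqref{FourthDerivFunctionRiem}. The starting observation is that, writing $X=X^ie_i$ and setting $X_i:=X^i$, the defining relation $\nabla X=(dX^i+X^j\theta^i_j)\otimes e_i$ together with the skew-symmetry \eqref{1_skewsymmConnForm} of the connection forms gives
\[
X_{ik}\theta^k = dX_i - X_t\theta^t_i,
\]
which is formally identical to the relation \eqref{HessianComponents} defining the Hessian of a function. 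Likewise, the second covariant derivative obeys $X_{ijk}\theta^k = dX_{ij}-X_{tj}\theta^t_i-X_{it}\theta^t_j$, the exact analogue of \eqref{1_derivate_terze}.

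For the first identity I would differentiate the displayed relation for $X_{ik}\theta^k$ and substitute the structure equations \eqref{1_firstStructureEq} and \eqref{1_secondStructureEq}, exactly as in the passage leading from \eqref{HessianComponents} to \eqref{ThirdDerivFunctionRiem}. After collecting terms one arrives at
\[
(dX_{ik}-X_{tk}\theta^t_i-X_{it}\theta^t_k)\wedge\theta^k = -\tfrac12 X_t R^t_{ijk}\,\theta^j\wedge\theta^k,
\]
so that $X_{ikj}\theta^j\wedge\theta^k=-\tfrac12 X_t R^t_{ijk}\theta^j\wedge\theta^k$; skew-symmetrizing in $j,k$ and using \eqref{1_def_forme_curvatura} yields $X_{ijk}-X_{ikj}=X_tR_{tijk}$.

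The second identity is then immediate: I would take the covariant derivative in the direction $l$ of the first identity, which produces $X_{ijkl}-X_{ikjl}=R_{tijk}X_{tl}+R_{tijk,l}X_t$, in complete parallel with how the commutation \eqref{FourthDerivFunctionRiem} (or rather \eqref{covDerivY3rdComm}) follows from \eqref{ThirdDerivFunctionRiem}. For the third identity I would instead differentiate the relation for $X_{ijk}\theta^k$ and collect the curvature terms, mimicking the computation that gives \eqref{FourthDerivFunctionRiem}: this is nothing but the Ricci identity applied to the $(1,1)$-tensor $\nabla X=X_{ij}\theta^j\otimes e_i$, commuting its last two differentiation slots $k$ and $l$, the curvature now acting on both free indices $i$ and $j$, which gives $X_{ijkl}-X_{ijlk}=R_{tikl}X_{tj}+R_{tjkl}X_{it}$.

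There is no genuine obstacle here: the whole content is that, in an orthonormal coframe, the structural form of the covariant derivative of a vector field coincides with that of the successive derivatives of a function, so the same exterior-differentiation-and-skew-symmetrization machinery applies. The only points requiring care are the bookkeeping of signs and of the index on which the curvature tensor acts, together with the \emph{remark} that none of the three identities exploits the (generally false) symmetry of $X_{ij}$ --- each identity merely commutes a pair of differentiation indices while treating the remaining indices as passive tensor slots.
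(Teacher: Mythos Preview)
Your proposal is correct and follows precisely the route the paper itself indicates: the paper does not spell out a proof but merely remarks that ``the commutation relations are similar to the ones proved for functions in Lemma~\ref{LemmaCommRulesFunctions}'' and cites \cite{MasRigRim}. Your write-up makes this similarity explicit --- the defining relation $X_{ik}\theta^k = dX_i - X_t\theta^t_i$ has the same structural form as \eqref{HessianComponents}, and the derivations of \eqref{ThirdDerivFunctionRiem} and \eqref{FourthDerivFunctionRiem} never invoke the symmetry of $f_{ij}$ --- so the computations carry over verbatim.
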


Concerning the Riemann curvature tensor, we begin with the classical Bianchi identities, that in our formalism become
\begin{align}
  &R_{ijkt}+R_{itjk}+R_{iktj}=0  \quad \text{(the First Bianchi Identity)};\label{FirstBianchiRiem}\\ &R_{ijkt, l}+R_{ijlk, t}+R_{ijtl, k}=0  \quad \text{(the Second Bianchi Identity)}. \label{SecondBianchiRiem}
\end{align}
For the second and third derivatives we prove
\begin{lemma}\label{LemmaSTRiemann}
\begin{align}
   &R_{ijkt, lr}-R_{ijkt, rl} = R_{sjkt}R_{silr}+R_{iskt}R_{sjlr}+R_{ijst}R_{sklr}+R_{ijks}R_{stlr}; \label{SecondDerivRiem}\\ &R_{ijkt, lrs}-R_{ijkt, lsr} = R_{vjkt, l}R_{virs}+R_{ivkt, l}R_{vjrs}+R_{ijvt, l}R_{vkrs}+R_{ijkv, l}R_{vtrs}+R_{ijkt, v}R_{vlrs}. \label{ThirdDerivRiem}
\end{align}
\end{lemma}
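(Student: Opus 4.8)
The plan is to read both formulas as instances of the general \emph{Ricci commutation identity} for the covariant derivatives of a tensor: when one interchanges two covariant derivatives acting on a $(0,p)$-tensor, the result is a sum of $p$ curvature terms, one for each slot on which the tensor carries an index. This is precisely the mechanism already isolated for functions in \eqref{ThirdDerivFunctionRiem} and for vector fields in Lemma \ref{CommutationsForVectorFields}; the only novelty here is that the object being differentiated, $\operatorname{R}$ (respectively $\nabla\operatorname{R}$), carries four (respectively five) indices, so four (respectively five) curvature factors appear.

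First I would establish the general rule in the moving-frame formalism, following verbatim the argument used for $f_{ijk}$ in Lemma \ref{LemmaCommRulesFunctions}. For a $(0,p)$-tensor with components $T_{i_1\cdots i_p}$ the first covariant derivative is defined by
\[
T_{i_1\cdots i_p,\,k}\,\theta^k = dT_{i_1\cdots i_p} - \sum_{a=1}^{p} T_{i_1\cdots i_{a-1}\,s\,i_{a+1}\cdots i_p}\,\theta^s_{i_a}.
\]
Differentiating this relation, eliminating $d\theta^k$ through the first structure equation \eqref{1_firstStructureEq} and $d\theta^s_{i_a}$ through the second structure equation \eqref{1_secondStructureEq}, the connection-form contributions reassemble into the symmetric (second-derivative) part, while the curvature $2$-forms survive as $\sum_a T_{i_1\cdots s\cdots i_p}\,\Theta^s_{i_a}$. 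Substituting $\Theta^s_{i_a}=\tfrac12 R^s_{i_a l r}\,\theta^l\wedge\theta^r$ from \eqref{1_def_forme_curvatura} and skew-symmetrizing in the two differentiation indices $l,r$ yields
\[
T_{i_1\cdots i_p,\,lr} - T_{i_1\cdots i_p,\,rl} = \sum_{a=1}^{p} T_{i_1\cdots i_{a-1}\,s\,i_{a+1}\cdots i_p}\,R_{s i_a l r},
\]
consistent with the sign convention fixed by \eqref{ThirdDerivFunctionRiem}.

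Second I would specialize. Taking $T=\operatorname{R}$, a $(0,4)$-tensor, the four slots $i,j,k,t$ produce the four summands $R_{sjkt}R_{silr}$, $R_{iskt}R_{sjlr}$, $R_{ijst}R_{sklr}$, $R_{ijks}R_{stlr}$, which is exactly \eqref{SecondDerivRiem}. For \eqref{ThirdDerivRiem} I would apply the same identity to the $(0,5)$-tensor $\nabla\operatorname{R}$ with components $R_{ijkt,l}$, now commuting the final two differentiation indices $r$ and $s$ while keeping $l$ frozen as a tensorial slot. The five slots $i,j,k,t,l$ then yield five curvature factors: the first four replace $i,j,k,t$ by the dummy $v$ (giving $R_{vjkt,l}R_{virs}$ and its three analogues), and the fifth replaces the already-differentiated index $l$, producing precisely $R_{ijkt,v}R_{vlrs}$.

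The main difficulty is not conceptual but combinatorial bookkeeping: in deriving the general rule one must check that every non-curvature term cancels and that each slot contributes a single curvature factor with the correct placement of the dummy index and the correct sign, all in agreement with \eqref{ThirdDerivFunctionRiem} and Lemma \ref{CommutationsForVectorFields}. Once the general identity is in hand, both statements of Lemma \ref{LemmaSTRiemann} follow by merely counting slots --- four for $\operatorname{R}$ and five for $\nabla\operatorname{R}$. Alternatively one could obtain each identity by a direct moving-frame differentiation of the defining relation for the relevant covariant derivative, exactly as in the function case, but routing everything through the single general rule is far more economical and makes the parallel with \eqref{ThirdDerivFunctionRiem} transparent.
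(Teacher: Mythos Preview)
Your proposal is correct. The underlying mechanism is the same as the paper's: differentiate the defining relation for the covariant derivative, invoke the first and second structure equations so that the connection-form terms reassemble and only the curvature $2$-forms $\Theta^s_{i_a}$ survive, then skew-symmetrize in the last two differentiation indices. The only difference is one of packaging. The paper carries out this moving-frame computation directly for the $(0,4)$-tensor $R_{ijkt}$ (differentiating the explicit relation $R_{ijkt,l}\theta^l = dR_{ijkt} - R_{ljkt}\theta^l_i - \cdots$) and then remarks that an analogous calculation yields \eqref{ThirdDerivRiem}; you instead prove the general Ricci commutation identity once for an arbitrary $(0,p)$-tensor and read off both formulas by counting slots. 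Your route is more economical and makes the parallelism with \eqref{ThirdDerivFunctionRiem} and Lemma \ref{CommutationsForVectorFields} transparent, while the paper's direct computation is more concrete and self-contained for the specific tensor at hand. Either way the content is identical.
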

\begin{proof}
 By definition of covariant derivative we have
 \begin{equation}\label{firstCovDerivRiemComp}
   R_{ijkt, l}\theta^l = d R_{ijkt}-R_{ljkt}\theta^l_i-R_{ilkt}\theta^l_j-R_{ijlt}\theta^l_k-R_{ijkl}\theta^l_t
 \end{equation}
 and
  \begin{equation}\label{secondCovDerivRiemComp}
   R_{ijkt, lr}\theta^r = d R_{ijkt, l}-R_{ljkt, l}\theta^r_i-R_{irkt, l}\theta^r_j-R_{ijrt, l}\theta^r_k-R_{ijkr, l}\theta^r_t-R_{ijkt, r}\theta^r_l.
 \end{equation}
 Differentiating equation \eqref{firstCovDerivRiemComp} and using the first structure equations we get
 \begin{align}
   d R_{ijkt, s}\wedge \theta^s-R_{ijkt, l}\theta^l_s\wedge\theta^s &= -d R_{ljkt}\wedge\theta^l_i+R_{ljkt}\pa{\theta^l_s\wedge\theta^s_i-\Theta^l_i}-d R_{ilkt}\wedge\theta^l_j+R_{ilkt}\pa{\theta^l_s\wedge\theta^s_j-\Theta^l_j}\\ \nonumber &-d R_{ijlt}\wedge\theta^l_i+R_{ijlt}\pa{\theta^l_s\wedge\theta^s_k-\Theta^l_k}-d R_{ijkl}\wedge\theta^l_i+R_{ijkl}\pa{\theta^l_s\wedge\theta^s_t-\Theta^l_t}.
 \end{align}
 Now we repeatedly use \eqref{secondCovDerivRiemComp} and \eqref{1_def_forme_curvatura} into the previous relation; after some manipulations we arrive at
 \begin{align*}
   \pa{d R_{ijkt, s}-R_{ljkt, s}\theta^l_i-R_{ilkt, s}\theta^l_j-R_{ijlt, s}\theta^l_k-R_{ijkl, s}\theta^l_t-R_{ijkt, l}\theta^l_s}\wedge\theta^s &= -\frac 12\left(R_{ljkt}R_{lirs}+R_{ilkt}R_{ljrs}\right. \\ \nonumber &\left.+R_{ijlt}R_{lkrs}+R_{ijkl}R_{ltrs}\right)\theta^r\wedge\theta^s.
 \end{align*}
 Renaming indices and skew-symmetrizing the left hand side, which is precisely $R_{ijkt, sr}\theta^r\wedge\theta^s$, we obtain \eqref{SecondDerivRiem}. A similar computation shows the validity of  \eqref{ThirdDerivRiem}.
\end{proof}

For the Ricci and the Schouten tensors we have the following
\begin{lemma}\label{LemmaFSTDerivRicci}
\begin{align}
  &R_{ij, k}-R_{ik, j} = -R_{tijk, t} = R_{tikj, t} \\ &R_{ij, kt}-R_{ij, tk}=R_{likt}R_{lj}+R_{ljkt}R_{li} \\ &R_{ij, ktl}-R_{ij, klt} = R_{sj, k}R_{sitl} +R_{is, k}R_{sjtl} + R_{ij, s}R_{sktl}.
\end{align}
\end{lemma}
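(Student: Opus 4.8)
The plan is to treat the three identities separately: the first comes from contracting the second Bianchi identity, while the remaining two are the Ricci commutation identity applied, respectively, to the $(0,2)$-tensor $R_{ij}$ and to the $(0,3)$-tensor $R_{ij,k}=\nabla_k R_{ij}$.

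For the first identity I would start from the second Bianchi identity \eqref{SecondBianchiRiem}, rewritten as $R_{abcd,e}+R_{abec,d}+R_{abde,c}=0$, and contract the first and third indices, i.e. set $a=c$ and sum. Using the trace convention $R_{ij}=R_{titj}$ together with the antisymmetry $R_{ijkt}=-R_{ijtk}$, the first two terms collapse to $R_{bd,e}-R_{be,d}$, while the third becomes the divergence $R_{sbde,s}$; relabeling the indices gives $R_{ij,k}-R_{ik,j}=-R_{tijk,t}$. The second equality $-R_{tijk,t}=R_{tikj,t}$ is then immediate from $R_{tijk}=-R_{tikj}$. The only care required is to contract exactly the pair of slots that reproduces the Ricci tensor in the chosen convention.

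For the second identity I would invoke the standard Ricci commutation rule, derived precisely as in the proof of Lemma \ref{LemmaSTRiemann}: differentiate the defining relation \eqref{CovDerivRicci} for $R_{ij,k}$ once more, use the structure equations together with the expression \eqref{1_def_forme_curvatura} for the curvature forms, and skew-symmetrize the coefficients of the resulting $2$-form. Equivalently, and more transparently, one applies the commutator $\nabla_t\nabla_k-\nabla_k\nabla_t$ slot by slot to the two lower indices of $R_{ij}$, using that on a single covariant index the commutator acts as in \eqref{ThirdDerivFunctionRiem} (see also Lemma \ref{CommutationsForVectorFields}); each slot then contributes a curvature factor $R_{s\,\cdot\,kt}$, giving $R_{ij,kt}-R_{ij,tk}=R_{likt}R_{lj}+R_{ljkt}R_{li}$. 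The third identity is the same commutation one order higher: since $R_{ij,ktl}-R_{ij,klt}=(\nabla_l\nabla_t-\nabla_t\nabla_l)R_{ij,k}$, I would apply the Ricci identity to the $(0,3)$-tensor $T_{ijk}=R_{ij,k}$, letting $\nabla_l\nabla_t-\nabla_t\nabla_l$ act on each of its three lower indices; each slot produces a curvature factor $R_{s\,\cdot\,tl}$ contracted against $T$, and summing the three contributions yields $R_{sitl}R_{sj,k}+R_{sjtl}R_{is,k}+R_{sktl}R_{ij,s}$, which is the claimed formula after renaming the dummy index. The same derivations apply verbatim to the Schouten tensor for the two commutation identities, whereas for $\mathrm{A}$ the Codazzi-type first identity is by definition the Cotton tensor.

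As for the main obstacle, none of the steps is conceptually hard, so the real difficulty is bookkeeping: keeping the index orders and signs consistent with the paper's conventions, in particular the reading $R_{ij,kt}=\nabla_t\nabla_k R_{ij}$ (derivatives appended on the right are applied last) and the placement of the two derivative indices in the last two slots of each curvature factor. The third identity, involving a triple covariant derivative and three curvature terms, is where an error in the orientation of the commutator $\nabla_l\nabla_t-\nabla_t\nabla_l$ would most easily creep in, so I would fix the sign once on the $1$-form case of Lemma \ref{CommutationsForVectorFields} and then propagate it mechanically to every slot.
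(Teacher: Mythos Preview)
Your proposal is correct. The paper's own proof is slightly more economical: having already established in Lemma~\ref{LemmaSTRiemann} the commutation rules \eqref{SecondDerivRiem} and \eqref{ThirdDerivRiem} for the full Riemann tensor, it simply traces those (together with the second Bianchi identity for the first line), whereas you re-derive the Ricci identity directly on the tensors $R_{ij}$ and $R_{ij,k}$. The two routes are equivalent---your direct application of the commutator is exactly what tracing the Riemann formulas amounts to, since the extra curvature terms coming from the traced slots cancel in pairs by antisymmetry---so the only practical difference is that the paper avoids repeating the moving-frame computation by recycling Lemma~\ref{LemmaSTRiemann}.
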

\begin{proof}
  The previous relations follow easily tracing equations \eqref{SecondBianchiRiem}, \eqref{SecondDerivRiem} and \eqref{ThirdDerivRiem}, respectively.
\end{proof}

A simple computation using the definition of the Schouten tensor, Lemma \ref{LemmaFSTDerivRicci} and equations \eqref{SecondDerivFunction} and \eqref{ThirdDerivFunctionRiem} applied to the scalar curvature shows the validity of
\begin{lemma}
\begin{align}
  &A_{ij, k}-A_{ik, j} = C_{ijk} = \pa{\frac{m-2}{m-3}}W_{tikj, t} \\ &A_{ij, kt}-A_{ij, tk}=R_{likt}A_{lj}+R_{ljkt}A_{li} \\ &A_{ij, ktl}-A_{ij, klt} = A_{sj, k}R_{sitl} +A_{is, k}R_{sjtl} + A_{ij, s}R_{sktl}.
\end{align}
\end{lemma}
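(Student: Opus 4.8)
The plan is to obtain all three identities from the corresponding statements for the Ricci tensor in Lemma \ref{LemmaFSTDerivRicci}, inserting the definition \eqref{def_Schouten} in the componentwise form $A_{ij}=R_{ij}-\frac{S}{2(m-1)}\delta_{ij}$ and treating the scalar curvature $S$ as a smooth function, so that the commutation rules \eqref{SecondDerivFunction} and \eqref{ThirdDerivFunctionRiem} apply to it. Since $A$ and $\ricc$ differ only by a pure-trace term $\frac{S}{2(m-1)}\delta_{ij}$, every covariant derivative of $A$ differs from the matching derivative of $\ricc$ by the same multiple of a covariant derivative of $S$, and the entire computation reduces to tracking and cancelling these scalar-curvature correction terms. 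The first identity is immediate: differentiating \eqref{def_Schouten} once gives $A_{ij,k}-A_{ik,j}=\pa{R_{ij,k}-R_{ik,j}}-\frac{1}{2(m-1)}\pa{S_k\delta_{ij}-S_j\delta_{ik}}$, which is precisely the definition \eqref{def_Cotton_comp} of $C_{ijk}$, while the identification $C_{ijk}=\pa{\frac{m-2}{m-3}}W_{tikj,t}$ is \eqref{def_Cotton_comp_Weyl} (equivalently, expand the $R_{tikj,t}$ furnished by Lemma \ref{LemmaFSTDerivRicci} through the decomposition \eqref{Riemann_Weyl} and collect terms).

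For the second identity I would first write $A_{ij,kt}-A_{ij,tk}=\pa{R_{ij,kt}-R_{ij,tk}}-\frac{1}{2(m-1)}\pa{S_{kt}-S_{tk}}\delta_{ij}$. The correction term vanishes by \eqref{SecondDerivFunction} applied to $S$, so the left-hand side equals $R_{likt}R_{lj}+R_{ljkt}R_{li}$ by Lemma \ref{LemmaFSTDerivRicci}. Substituting $R_{lj}=A_{lj}+\frac{S}{2(m-1)}\delta_{lj}$ and $R_{li}=A_{li}+\frac{S}{2(m-1)}\delta_{li}$ produces, beyond the desired $R_{likt}A_{lj}+R_{ljkt}A_{li}$, only the extra term $\frac{S}{2(m-1)}\pa{R_{jikt}+R_{ijkt}}$, which is zero by the antisymmetry $R_{jikt}=-R_{ijkt}$ in the first index pair.

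The third identity follows the same bookkeeping one order higher. Differentiating twice more gives $A_{ij,ktl}-A_{ij,klt}=\pa{R_{ij,ktl}-R_{ij,klt}}-\frac{1}{2(m-1)}\pa{S_{ktl}-S_{klt}}\delta_{ij}$; here the scalar correction is \emph{not} zero but equals $-\frac{1}{2(m-1)}S_rR_{rktl}\delta_{ij}$ by \eqref{ThirdDerivFunctionRiem} applied to $S$. In the right-hand side supplied by Lemma \ref{LemmaFSTDerivRicci}, I would replace each Ricci-derivative factor by its Schouten counterpart, $R_{sj,k}=A_{sj,k}+\frac{S_k}{2(m-1)}\delta_{sj}$, $R_{is,k}=A_{is,k}+\frac{S_k}{2(m-1)}\delta_{is}$, and $R_{ij,s}=A_{ij,s}+\frac{S_s}{2(m-1)}\delta_{ij}$. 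The correction coming from $R_{ij,s}$ is $\frac{\delta_{ij}}{2(m-1)}S_sR_{sktl}$, which cancels the scalar correction above, while the corrections from $R_{sj,k}$ and $R_{is,k}$ combine into $\frac{S_k}{2(m-1)}\pa{R_{jitl}+R_{ijtl}}$, again vanishing by antisymmetry in the first pair. What survives is exactly $A_{sj,k}R_{sitl}+A_{is,k}R_{sjtl}+A_{ij,s}R_{sktl}$.

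The computation is entirely mechanical, so I do not expect a genuine conceptual obstacle; the one place demanding care — and the most likely source of a sign or index error — is the repeated reliance on $R_{jikt}=-R_{ijkt}$ to force the scalar-curvature corrections to cancel in pairs rather than leaving a residual tensor, together with the correct index matching when applying \eqref{ThirdDerivFunctionRiem} to $S$ in the third identity.
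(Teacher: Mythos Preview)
Your proof is correct and follows precisely the approach indicated in the paper, which simply states that the lemma follows by a simple computation using the definition of the Schouten tensor, Lemma \ref{LemmaFSTDerivRicci}, and equations \eqref{SecondDerivFunction} and \eqref{ThirdDerivFunctionRiem} applied to the scalar curvature. Your write-up in fact supplies the details the paper omits, including the key observation that the scalar-curvature corrections cancel via the antisymmetry $R_{jikt}=-R_{ijkt}$.
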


A direct consequence of the definition of the Weyl tensor and of the First Bianchi identity for the Riemann curvature tensor is the First Bianchi identity for $W$:
\begin{align}
  &W_{ijkt}+W_{itjk}+W_{iktj}=0.
\end{align}
As far as the first derivatives of $W$ are concerned, we have
\begin{lemma}\label{lemma_fake2ndBianchiWeyl}
  \begin{equation}\label{fake2ndBianchiWeyl}
    W_{ijkt, l}+W_{ijlk, t}+W_{ijtl, k}=\frac{1}{m-2}\pa{C_{itl}\delta_{jk}+C_{ilk}\delta_{jt}+C_{ikt}\delta_{jl}-C_{jtl}\delta_{ik}-C_{jlk}\delta_{it}-C_{jkt}\delta_{il}}.
  \end{equation}
\end{lemma}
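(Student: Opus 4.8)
The plan is to differentiate the algebraic decomposition of the Riemann tensor in terms of the Weyl and Schouten tensors and then invoke the second Bianchi identity. Solving \eqref{Riemann_Weyl_Schouten} for the Weyl tensor gives $W_{ijkt}=R_{ijkt}-\frac{1}{m-2}\pa{A_{ik}\delta_{jt}-A_{it}\delta_{jk}+A_{jt}\delta_{ik}-A_{jk}\delta_{it}}$. Taking the covariant derivative in a free direction and then rotating the last three indices $(k,t,l)$ produces the left-hand side of \eqref{fake2ndBianchiWeyl} as the sum of three copies of this identity, each paired with the corresponding cyclic sum of Riemann derivatives and of Schouten derivatives.

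First I would observe that the cyclic sum of the Riemann part, namely $R_{ijkt,l}+R_{ijlk,t}+R_{ijtl,k}$, is precisely the left-hand side of the second Bianchi identity \eqref{SecondBianchiRiem} and therefore vanishes identically. Consequently the entire left-hand side of \eqref{fake2ndBianchiWeyl} equals $-\frac{1}{m-2}$ times the cyclic sum of the Schouten contributions, and the problem reduces to a purely algebraic simplification of the latter.

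Next I would carry out the three index substitutions explicitly: for the three summands I insert, respectively, $(k,t,l)$, $(l,k,t)$ and $(t,l,k)$ into the Schouten bracket, making sure the derivative index rotates together with the two curvature indices. Collecting the resulting twelve terms according to the Kronecker delta they carry, each of the six deltas $\delta_{jt},\delta_{jk},\delta_{ik},\delta_{it},\delta_{jl},\delta_{il}$ acquires exactly two Schouten derivatives differing by a transposition of the last two indices; for instance the coefficient of $\delta_{jt}$ is $A_{ik,l}-A_{il,k}$. By the definition of the Cotton tensor \eqref{def_Cotton_comp} each such difference is a component of $C$, e.g. $A_{ik,l}-A_{il,k}=C_{ikl}=-C_{ilk}$, and substituting all six identifications reproduces precisely the right-hand side of \eqref{fake2ndBianchiWeyl}.

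I expect the only genuine obstacle to be the bookkeeping: keeping track of the signs arising from the skew-symmetry $C_{iab}=-C_{iba}$ and ensuring the rotation of the derivative index is applied consistently in all three terms. There is no conceptual difficulty beyond the second Bianchi identity and the definition of $C$, so I would fix once and for all the orientation $C_{iab}=A_{ia,b}-A_{ib,a}$ and match the six deltas one at a time, which makes the computation routine if error-prone.
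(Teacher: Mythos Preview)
Your argument is correct and follows the same strategy as the paper: differentiate the Weyl--Riemann decomposition, take the cyclic sum over the last three indices, kill the Riemann part with the second Bianchi identity, and read off the Cotton tensor from the surviving terms. The only difference is cosmetic: the paper starts from the Ricci/scalar decomposition \eqref{Riemann_Weyl} and then has to invoke $R_{ij,k}-R_{ik,j}=C_{ijk}+\frac{1}{2(m-1)}(S_k\delta_{ij}-S_j\delta_{ik})$ so that the extra scalar-curvature terms cancel, whereas you start from the Schouten decomposition \eqref{Riemann_Weyl_Schouten}, which makes the Cotton tensor appear directly and avoids that intermediate step.
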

\begin{proof}
  We start by taking the covariant derivative of \eqref{Riemann_Weyl}:
  \begin{equation}\label{CovDerivRiemann_Weyl}
    R_{ijkt, l} = W_{ijkt, l} + \frac{1}{m-2}\pa{R_{ik, l}\delta_{jt}-R_{it, l}\delta_{jk}+R_{jt, l}\delta_{ik}-R_{jk, l}\delta_{it}}-\frac{S_l}{(m-1)(m-2)}\pa{\delta_{ik}\delta_{jt}-\delta_{it}\delta_{jk}}.
  \end{equation}
  Permuting cyclically the last three indices, summing up and using \eqref{FirstBianchiRiem} we deduce
  \begin{align*}
    -\pa{W_{ijkt, l}+W_{ijlk, t}+W_{ijtl, k}} &=\frac{1}{m-2}\sq{\pa{R_{ik, l}-R_{il, k}}\delta_{jt}+\pa{R_{il, t}-R_{it, l}}\delta_{jk}+\pa{R_{it, k}-R_{ik, t}}\delta_{jl}} \\ &-\frac{1}{m-2}\sq{\pa{R_{jk, l}-R_{jl, k}}\delta_{it}+\pa{R_{jl, t}-R_{jt, l}}\delta_{ik}+\pa{R_{jt, k}-R_{jk, t}}\delta_{il}}\\ &-\frac{1}{(m-1)(m-2)}\sq{S_l\pa{\delta_{ik}\delta_{jt}-\delta_{it}\delta_{jk}}+S_t\pa{\delta_{il}\delta_{jk}-\delta_{ik}\delta_{jl}}+S_k\pa{\delta_{it}\delta_{jl}-\delta_{il}\delta_{jt}}}.
  \end{align*}
  Using the fact that $R_{ij, k}-R_{ik, j} = C_{ijk}+\frac{1}{2(m-1)}\pa{S_k\delta_{ij}-S_{j}\delta_{ik}}$, after some manipulation we get \eqref{fake2ndBianchiWeyl}.
\end{proof}
For the second and third derivatives of $W$, a computation similar to the one used in the proof of Lemma \ref{LemmaSTRiemann} shows that
\begin{lemma}
\begin{align}
  &W_{ijkl, st}-W_{ijkl, ts} = W_{rjkl}R_{rist}+W_{irkl}R_{rjst}+W_{ijrl}R_{rkst}+W_{ijkr}R_{rlst}; \label{SecondDerivWeylusingRiem}\\ &W_{ijkl, trs}-W_{ijkl, tsr} = W_{vjkl, t}R_{virs}+W_{ivkl, t}R_{vjrs}+W_{ijvl, t}R_{vkrs}+W_{ijkv, t}R_{vlrs}+W_{ijkl, v}R_{vtrs}. \label{ThirdDerivWeylusingRiem}
\end{align}
\end{lemma}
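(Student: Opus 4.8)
The plan is to recognize both identities as instances of the standard Ricci commutation rule for second covariant derivatives, applied first to the Weyl tensor itself and then to its first covariant derivative, and to reproduce — essentially verbatim in structure — the moving-frame computation already carried out for the Riemann tensor in the proof of Lemma \ref{LemmaSTRiemann}.

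For \eqref{SecondDerivWeylusingRiem}, first I would write the covariant derivative of $W$ from its definition, in perfect analogy with \eqref{firstCovDerivRiemComp},
\[
W_{ijkl, s}\theta^s = dW_{ijkl} - W_{rjkl}\theta^r_i - W_{irkl}\theta^r_j - W_{ijrl}\theta^r_k - W_{ijkr}\theta^r_l,
\]
together with the corresponding expression for the second covariant derivative $W_{ijkl, st}\theta^t$, patterned on \eqref{secondCovDerivRiemComp}. Then I would differentiate the displayed relation, use the first structure equations \eqref{1_firstStructureEq} and $d\theta^r_i = -\theta^r_s\wedge\theta^s_i + \Theta^r_i$ coming from \eqref{1_secondStructureEq}, and substitute back the definition of $W_{ijkl,st}$. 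The key point is that every $dW$ term and every single connection-form term cancels against the terms generated by the definition of $W_{ijkl,st}$, leaving only the four curvature-form contributions attached to $\Theta^r_i,\Theta^r_j,\Theta^r_k,\Theta^r_l$. Replacing each $\Theta$ by its expression \eqref{1_def_forme_curvatura} and then skew-symmetrizing in the last two differentiation indices — using that $\set{\theta^t\wedge\theta^s}$ is a basis for the $2$-forms — I would read off precisely the four terms on the right-hand side of \eqref{SecondDerivWeylusingRiem}.

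For \eqref{ThirdDerivWeylusingRiem} I would apply the identical procedure to the $(0,5)$-tensor $W_{ijkl,t}$, treating the differentiated index $t$ on exactly the same footing as the first four. Since this tensor now carries five indices, the same argument generates five curvature contributions, one for each slot, which after the analogous cancellation and skew-symmetrization produce exactly the five terms on the right-hand side of \eqref{ThirdDerivWeylusingRiem}. Alternatively, one may simply invoke the general Ricci commutation identity for the second covariant derivatives of a $(0,p)$-tensor, with $p=4$ for \eqref{SecondDerivWeylusingRiem} and with the identity applied to $\nabla W$ (so $p=5$) for \eqref{ThirdDerivWeylusingRiem}.

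The computation is entirely routine; the only genuine care required is the bookkeeping of the connection-form terms, namely checking that every term not proportional to a curvature form cancels and that the surviving curvature contributions are correctly assigned to their respective index slots. This is exactly the mechanism already validated in Lemma \ref{LemmaSTRiemann}, so the hard part will be solely the length and care of the index manipulation, not any conceptual difficulty.
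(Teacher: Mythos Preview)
Your proposal is correct and matches the paper's approach exactly: the paper simply states that a computation similar to the one used in the proof of Lemma \ref{LemmaSTRiemann} yields both identities, which is precisely the moving-frame calculation you outline.
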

Using the definition of the Weyl tensor in equation \eqref{SecondDerivWeylusingRiem} we obtain
\begin{lemma}
\begin{align}
  W_{ijkl, st}-W_{ijkl, ts} &=W_{rjkl}W_{rist}+W_{irkl}W_{rjst}+W_{ijrl}W_{rkst}+W_{ijkr}W_{rlst} \\ \nonumber &+\frac{1}{m-2}\left[W_{rjkl}\pa{R_{rs}\delta_{it}-R_{rt}\delta_{is}+R_{it}\delta_{rs}-R_{is}\delta_{rt}}\right.\\\nonumber &\qquad\qquad +W_{irkl}\pa{R_{rs}\delta_{jt}-R_{rt}\delta_{js}+R_{jt}\delta_{rs}-R_{js}\delta_{rt}}\\\nonumber
   & \qquad\qquad +W_{ijrl}\pa{R_{rs}\delta_{kt}-R_{rt}\delta_{ks}+R_{kt}\delta_{rs}-R_{ks}\delta_{rt}} \\\nonumber &\left.\qquad\qquad+W_{ijkr}\pa{R_{rs}\delta_{lt}-R_{rt}\delta_{ls}+R_{lt}\delta_{rs}-R_{ls}\delta_{rt}}\right]
  \\\nonumber &-\frac{S}{(m-1)(m-2)}\left[W_{rjkl}\pa{\delta_{rs}\delta_{it}-\delta_{rt}\delta_{is}}+W_{irkl}\pa{\delta_{rs}\delta_{jt}-\delta_{rt}\delta_{js}}\right.\\ \nonumber &\qquad\qquad\qquad\qquad\left.+W_{ijrl}\pa{\delta_{rs}\delta_{kt}-\delta_{rt}\delta_{ks}}+W_{ijkr}\pa{\delta_{rs}\delta_{lt}-\delta_{rt}\delta_{ls}}\right].
\end{align}
\end{lemma}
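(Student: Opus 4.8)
The plan is to derive this identity directly from the Ricci-type commutation rule for the second covariant derivatives of the Weyl tensor, namely equation \eqref{SecondDerivWeylusingRiem}, by expanding each Riemann curvature factor appearing on its right-hand side into its Weyl and Ricci (plus scalar curvature) parts via the decomposition \eqref{Riemann_Weyl}. No genuinely new geometric input is required: the entire content of the statement is a reorganization of \eqref{SecondDerivWeylusingRiem} once each Riemann tensor is traded for its irreducible components.

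Concretely, I would start from
\[
W_{ijkl, st}-W_{ijkl, ts} = W_{rjkl}R_{rist}+W_{irkl}R_{rjst}+W_{ijrl}R_{rkst}+W_{ijkr}R_{rlst}
\]
and substitute, in each of the four summands, the expansion of the corresponding factor $R_{r\,\cdot\,st}$ obtained by relabelling the indices in \eqref{Riemann_Weyl}. For instance, for the first term one sets $(i,j,k,t)\mapsto(r,i,s,t)$ in \eqref{Riemann_Weyl} to obtain
\[
R_{rist}=W_{rist}+\frac{1}{m-2}\pa{R_{rs}\delta_{it}-R_{rt}\delta_{is}+R_{it}\delta_{rs}-R_{is}\delta_{rt}}-\frac{S}{(m-1)(m-2)}\pa{\delta_{rs}\delta_{it}-\delta_{rt}\delta_{is}},
\]
and analogously for $R_{rjst}$, $R_{rkst}$, $R_{rlst}$ with the obvious index shifts in the free slot ($j$, $k$ or $l$ in place of $i$).

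Collecting the resulting terms by type then reproduces the three blocks of the statement: the four products of the form $W_{\cdots}\,W_{r\,\cdot\,st}$ assemble into the first line, the four pieces carrying the factor $\frac{1}{m-2}$ give the middle block, and the four pieces carrying $\frac{S}{(m-1)(m-2)}$ give the last block. Since the contracted index $r$ is already summed against the surviving Weyl factor in each summand, no further simplification (for example using the trace-free property of $W$) is needed, and the identity follows verbatim.

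The computation is entirely mechanical; the only real source of error --- and hence the step I would watch most carefully --- is the bookkeeping of the four separate relabellings of \eqref{Riemann_Weyl}, making sure that in each summand the pair $(s,t)$ occupies the last two slots of the Riemann factor and that the dummy index $r$ is matched with the correct free index ($i$, $j$, $k$ or $l$) in the Ricci and scalar terms. There is no analytic or conceptual obstacle beyond this index accounting.
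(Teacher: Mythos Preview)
Your proposal is correct and follows exactly the paper's approach: the paper simply states that the identity is obtained by using the decomposition \eqref{Riemann_Weyl} in equation \eqref{SecondDerivWeylusingRiem}, which is precisely the substitution-and-collect argument you describe.
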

Tracing the previous relation  we also get
\begin{align}
  W_{tjkl, st}-W_{tjkl, ts} &= R_{st}W_{tjkl}+W_{trkl}W_{rjst}+W_{tjrl}W_{rkst}+W_{tjkr}W_{rlst}\\ \nonumber &+\frac{1}{m-2}\pa{R_{tr}W_{tjrk}\delta_{ls}-R_{tr}W_{tjrl}\delta_{ks}} \\ \nonumber &+\frac{1}{m-2}\pa{R_{tk}W_{tjsl}+R_{tl}W_{tjks}+R_{tj}W_{tskl}}.
\end{align}
Using the definition of the Weyl tensor in equation \eqref{ThirdDerivWeylusingRiem} we obtain
\begin{lemma}
\begin{align}
  W_{ijkl, trs}-W_{ijkl, tsr} &=W_{vjkl, t}W_{virs}+W_{ivkl, t}W_{vjrs}+W_{ijvl, t}W_{vkrs}+W_{ijkv, t}W_{vlrs}+W_{ijkl, v}W_{vtrs} \\ \nonumber &+\frac{1}{m-2}\left[W_{vjkl, t}\pa{R_{vr}\delta_{is}-R_{vs}\delta_{ir}+R_{is}\delta_{vr}-R_{ir}\delta_{vs}}\right.\\\nonumber &\qquad\qquad +W_{ivkl, t}\pa{R_{vr}\delta_{js}-R_{vs}\delta_{jr}+R_{js}\delta_{vr}-R_{jr}\delta_{vs}}\\\nonumber &\qquad\qquad +W_{ijvl, t}\pa{R_{vr}\delta_{ks}-R_{vs}\delta_{kr}+R_{ks}\delta_{vr}-R_{kr}\delta_{vs}}
  \\\nonumber
   & \qquad\qquad +W_{ijkv, t}\pa{R_{vr}\delta_{ls}-R_{vs}\delta_{lr}+R_{ls}\delta_{vr}-R_{lr}\delta_{vs}} \\\nonumber &\left.\qquad\qquad+W_{ijkl, v}\pa{R_{vr}\delta_{ts}-R_{vs}\delta_{tr}+R_{ts}\delta_{vr}-R_{tr}\delta_{vs}}\right]
  \\ \nonumber &-\frac{S}{(m-1)(m-2)}\left[W_{vjkl, t}\pa{\delta_{vr}\delta_{is}-\delta_{vs}\delta_{ir}}+W_{ivkl, t}\pa{\delta_{vr}\delta_{js}-\delta_{vs}\delta_{jr}}\right.\\ \nonumber &\qquad\qquad\qquad\qquad W_{ijvl, t}\pa{\delta_{vr}\delta_{ks}-\delta_{vs}\delta_{kr}}+W_{ijkv, t}\pa{\delta_{vr}\delta_{ls}-\delta_{vs}\delta_{lr}} \\ \nonumber &\qquad\qquad\qquad\qquad\left.+W_{ijkl, v}\pa{\delta_{vr}\delta_{ts}-\delta_{vs}\delta_{tr}}\right].
\end{align}
\end{lemma}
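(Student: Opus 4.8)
The plan is to obtain the stated identity by a direct substitution, exactly mirroring the passage from \eqref{SecondDerivWeylusingRiem} to its Weyl-only reformulation one lemma earlier. The starting point is the already-established commutation rule \eqref{ThirdDerivWeylusingRiem}, which expresses the commutator $W_{ijkl, trs}-W_{ijkl, tsr}$ as a sum of five terms, each the product of a first covariant derivative of $W$ with a single factor of the Riemann tensor, namely $R_{virs}$, $R_{vjrs}$, $R_{vkrs}$, $R_{vlrs}$ and $R_{vtrs}$. In every term the summation index $v$ is contracted against one slot of the corresponding Weyl-derivative factor, so the right-hand side is linear in these five Riemann factors; replacing each of them according to the decomposition \eqref{Riemann_Weyl} therefore produces the desired expression with no further manipulation.

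Concretely, I would apply \eqref{Riemann_Weyl} to each factor with the appropriate relabelling of indices: for the factor multiplying $W_{vjkl, t}$ one expands $R_{virs}$ using $\pa{v, i, r, s}$ in the role of $\pa{i, j, k, t}$; for the factor multiplying $W_{ivkl, t}$ one expands $R_{vjrs}$ using $\pa{v, j, r, s}$; and analogously for the remaining three terms $R_{vkrs}$, $R_{vlrs}$, $R_{vtrs}$. Each such substitution splits into three pieces: the pure Weyl part $W_{v\cdots rs}$, the Ricci part carrying the prefactor $\frac{1}{m-2}$, and the scalar-curvature part carrying $-\frac{S}{(m-1)(m-2)}$. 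Collecting the five Weyl parts yields the first line of the statement; collecting the five Ricci parts, with their Kronecker deltas arranged according to the pattern dictated by \eqref{Riemann_Weyl}, yields the bracketed $\frac{1}{m-2}$-block; and collecting the five scalar parts yields the final $-\frac{S}{(m-1)(m-2)}$-block.

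I expect no genuine mathematical obstacle here: the computation is entirely mechanical and term-by-term, with no cancellations or hidden identities required, since the single contracted index $v$ never collides with the free indices of the Weyl-derivative factors. The only real care needed is bookkeeping — ensuring that for each of the five terms the Kronecker deltas inherit exactly the index positions prescribed by \eqref{Riemann_Weyl}, i.e. the antisymmetric patterns $R_{vr}\delta_{is}-R_{vs}\delta_{ir}+R_{is}\delta_{vr}-R_{ir}\delta_{vs}$ and $\delta_{vr}\delta_{is}-\delta_{vs}\delta_{ir}$, with the index $i$ replaced by the appropriate free index $j$, $k$, $l$, $t$ in each successive term. As a consistency check one may verify that the whole scheme is formally identical to the one already used to pass from \eqref{SecondDerivWeylusingRiem} to the second-derivative Weyl-only commutation rule, and that a further trace reproduces the traced identities recorded just above.
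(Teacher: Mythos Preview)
Your proposal is correct and is exactly the approach the paper takes: it simply states that the lemma is obtained by substituting the decomposition \eqref{Riemann_Weyl} into each of the five Riemann factors in \eqref{ThirdDerivWeylusingRiem}, precisely as you describe. There is nothing to add.
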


The First Bianchi Identities for the Weyl tensor immediately imply
\begin{equation}\label{PermutCiclCotton}
  C_{ijk}+C_{jki}+C_{kij}=0.
\end{equation}
From the definition of the Cotton tensor we also deduce
\begin{equation}
  C_{ijk, t} = A_{ij,kt}-A_{ik, jt}=R_{ij, kt}-R_{ik, jt}-\frac{1}{2(m-1)}\pa{S_{kt}\delta_{ij}-S_{jt}\delta_{ik}};
\end{equation}
since, by Lemma \ref{LemmaFSTDerivRicci} and Schur's identity $S_i = \frac 12 R_{ik, k}$,
\begin{equation}
  R_{ik, jk} = R_{ik, kj}+R_{tijk}R_{tk}+R_{tkjk}R_{ti}=\frac 12 S_{ij}-R_{tk}R_{itjk}+R_{it}R_{tj},
\end{equation}
we obtain the following expression for the divergence of the Cotton tensor:
\begin{equation}\label{DiverCotton}
  C_{ijk, k}= R_{ij, kk}-\frac{m-2}{2(m-1)} S_{ij}+R_{tk}R_{itjk}-R_{it}R_{tj}-\frac{1}{2(m-1)}\Delta S\delta_{ij}.
\end{equation}
The previous relation also shows that
\begin{equation}\label{SymmDivCotton}
C_{ijk, k}=C_{jik, k},
\end{equation}
thus confirming the symmetry of the Bach tensor, see \eqref{def_Bach_comp}.

Taking the covariant derivative of \eqref{PermutCiclCotton} and using \eqref{SymmDivCotton} we can also deduce that
\begin{equation}\label{NullDiverCotton}
  C_{kij, k}=0.
\end{equation}

\section{Some useful relations for Ricci solitons}\label{sec5}
The aim of this short section is to recall a number of useful relations, valid on every Ricci soliton, that have been consistently exploited in the literature to obtain several well known results.

First we have  (see also \cite{MasRigRim}, Lemma 2.2 and Lemma 2.3, \cite{ELnM}):

\begin{proposition} Let $(M, g, X)$ be a generic Ricci soliton structure on $\varrg$. Then the following identities hold:
\begin{eqnarray}
&\label{eq1} R_{ij} +\frac{1}{2}(X_{ij}+X_{ji})=\lambda \delta_{ij};\\
&\label{eq2}S +  \diver X=m\lambda; \\
&\label{eq3}S_k = -X_{iik}; \\
&\label{eq4} R_{tj}X_{t}=-X_{ktt};\\
&\label{eq5} R_{ij,k}-R_{ik,j}=-\frac{1}{2}R_{lijk}X_{l}+\frac{1}{2}(X_{kij}-X_{jik}); \\
&\label{eq6} R_{ij,k}-R_{kj,i}=\frac{1}{2}R_{ljki}X_{l}+\frac{1}{2}(X_{kji}-X_{ijk}); \\
&\label{scalGen} \frac{1}{2}\Delta S = \frac{1}{2}g\pa{X, \nabla S} +\lambda S - \abs{\ricc}^2.
\end{eqnarray}
 If $X=\nabla f$ for some $f\in\cinf$ then
 \begin{eqnarray}
 &\label{eq1g} R_{ij} +f_{ij}=\lambda \delta_{ij};\\
&\label{eq2g}S +  \Delta f=m\lambda; \\
&\label{eq3g}S_k = 2f_tR_{tk}; \\
&\label{eq6g} R_{ij,k}-R_{kj,i}= -f_tR_{tijk}; \\
&\label{HamiltonId} S + \abs{\nabla f}^2-2\lambda f = C, \quad C \in \erre; \\
&\label{scalGrad} \frac{1}{2}\Delta S = \frac{1}{2}g\pa{\nabla f, \nabla S} +\lambda S - \abs{\ricc}^2.
\end{eqnarray}
\end{proposition}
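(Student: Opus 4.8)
The plan is to extract every identity from the single structural equation \eqref{eq1} by tracing, differentiating, and contracting with curvature, invoking the commutation rules of Section \ref{SecCommy} whenever covariant derivatives have to be reordered. First I record \eqref{eq1} as the componentwise form of \eqref{EQ0.1_RicSolEq}, and obtain \eqref{eq2} by tracing, using $R_{ii}=S$, $X_{ii}=\diver X$ and $\delta_{ii}=m$. Since $\lambda$ is constant, one covariant derivative of \eqref{eq2} gives $S_k+X_{iik}=0$, which is \eqref{eq3}.

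For \eqref{eq5} and \eqref{eq6} I differentiate \eqref{eq1} in the remaining index, obtaining $R_{ij,k}=-\tfrac12\pa{X_{ijk}+X_{jik}}$, and then antisymmetrize in the appropriate pair of indices. In each case all but one of the resulting third derivatives of $X$ combine into the stated skew combination, while the remaining misordered pair is rewritten through the first commutation rule of Lemma \ref{CommutationsForVectorFields}, namely $X_{abc}-X_{acb}=X_tR_{tabc}$; for \eqref{eq6} one also uses the antisymmetry of the Riemann tensor in its last two indices to bring the curvature term to the form $\tfrac12 R_{ljki}X_l$.

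Identity \eqref{eq4} follows by taking the divergence of \eqref{eq1} in $j$: the contracted second Bianchi (Schur) identity $R_{ij,j}=\tfrac12 S_i$ handles the Ricci term, while the commutation $X_{jij}=X_{jji}+X_tR_{tjij}$ of Lemma \ref{CommutationsForVectorFields}, together with $X_{jji}=\pa{\diver X}_i=-S_i$ (from \eqref{eq2}) and the contraction $R_{tjij}=R_{ti}$, makes the two $S_i$ terms cancel, leaving $R_{ti}X_t=-X_{ijj}$, which is \eqref{eq4}. The scalar identity \eqref{scalGen} is the delicate point, and I expect it to be the main obstacle. Its algebraic half is immediate: contracting \eqref{eq1} with $\ricc$ and using that $\ricc$ is symmetric gives $\abs{\ricc}^2+R_{ij}X_{ij}=\lambda S$, i.e. $\lambda S-\abs{\ricc}^2=R_{ij}X_{ij}$. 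It then remains to prove the differential half, $\tfrac12\Delta S-\tfrac12 g\pa{X,\nabla S}=R_{ij}X_{ij}$. For this I start from $\tfrac12\Delta S=R_{ij,ij}$ (the twice-contracted second Bianchi identity), substitute \eqref{eq1} to express $R_{ij,ij}$ through fourth covariant derivatives of $X$, and reduce these to a canonical order using the higher commutation rules of Lemma \ref{CommutationsForVectorFields}, together with \eqref{eq2} in the form $\diver X=m\lambda-S$ and Schur's identity. The hard part is precisely this reordering: the curvature corrections generated by the commutations must be matched against the algebraic relation above before the expression collapses to \eqref{scalGen}.

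The gradient identities are obtained by specializing $X=\nabla f$, so that $X_i=f_i$ and $X_{ij}=f_{ij}$ is symmetric. Then \eqref{eq1g} and \eqref{eq2g} are \eqref{eq1} and \eqref{eq2} verbatim, with $\diver\nabla f=\Delta f$. For \eqref{eq3g} I specialize \eqref{eq4} to $R_{tj}f_t=-f_{jtt}$ and commute $f_{jtt}=f_{ttj}+f_sR_{sj}$ by \eqref{TracedThirdDerivFunctionRicci}; since $f_{ttj}=\pa{\Delta f}_j=-S_j$ by \eqref{eq2g}, this yields $2f_tR_{tj}=S_j$. Identity \eqref{eq6g} I read off directly by differentiating \eqref{eq1g}, $R_{ij,k}=-f_{ijk}$, and commuting the third derivatives of $f$ via \eqref{CovDerivSecondDerivFct} and \eqref{ThirdDerivFunctionRiem}, which turns the antisymmetrization into the curvature term of \eqref{eq6g}. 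For \eqref{HamiltonId} I check that $S+\abs{\nabla f}^2-2\lambda f$ has vanishing gradient: differentiating gives $S_k+2f_tf_{tk}-2\lambda f_k$, and substituting $f_{tk}=\lambda\delta_{tk}-R_{tk}$ from \eqref{eq1g} and $S_k=2f_tR_{tk}$ from \eqref{eq3g} shows it is identically zero, so the quantity is constant. Finally, \eqref{scalGrad} is \eqref{scalGen} with $X=\nabla f$; in the gradient case it also admits a short independent proof, by differentiating \eqref{eq3g} to get $\tfrac12\Delta S=\tfrac12 g\pa{\nabla f,\nabla S}+R_{ik}f_{ik}$ and then replacing $f_{ik}=\lambda\delta_{ik}-R_{ik}$ to obtain $R_{ik}f_{ik}=\lambda S-\abs{\ricc}^2$.
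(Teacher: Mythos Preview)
The paper does not actually prove this proposition: it simply states the identities and refers to \cite{MasRigRim}, Lemmas 2.2 and 2.3, and \cite{ELnM}. So there is no in-text argument to compare against, and your proposal is effectively supplying the omitted details.

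Your derivations are correct. The routine identities \eqref{eq1}--\eqref{eq6} come out exactly as you describe, by tracing and differentiating \eqref{eq1} and invoking Lemma~\ref{CommutationsForVectorFields}. For the gradient specializations, your proofs of \eqref{eq3g}, \eqref{eq6g}, \eqref{HamiltonId} and the short independent argument for \eqref{scalGrad} are all sound. One small caveat on \eqref{eq6g}: carrying out your computation gives $R_{ij,k}-R_{kj,i}=-f_tR_{tjik}$, which differs from the displayed $-f_tR_{tijk}$ by the position of $i$ and $j$; this is a harmless typographical slip in the index ordering rather than a flaw in your method.

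For \eqref{scalGen}, which you flag as the delicate step, your outline does go through. Concretely: from Schur one has $\tfrac12\Delta S=R_{kj,jk}=-\tfrac12(X_{kjjk}+X_{jkjk})$, while taking the divergence of \eqref{eq4} gives $\tfrac12 g(X,\nabla S)+R_{tj}X_{tj}=-X_{jkkj}$. Since $X_{kjjk}=X_{jkkj}$ by relabeling, the whole claim reduces to the traced identity $X_{jkjk}=X_{jkkj}$. This follows from the third rule in Lemma~\ref{CommutationsForVectorFields}: the curvature corrections contract to $-R_{sk}X_{sk}+R_{sj}X_{js}$, which vanishes because $\ricc$ is symmetric. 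Combined with the algebraic half $R_{ij}X_{ij}=\lambda S-\abs{\ricc}^2$ that you already isolated, this yields \eqref{scalGen}.
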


%\begin{lemma}\label{XlapRicS}
%Let $(M, g, X)$ be a generic Ricci soliton structure on $\varrg$. Then
%\begin{eqnarray}
%&\label{eq7}\Delta_X R_{ik}=2\lambda R_{ik}-2 R_{ijkt} R_{jt}+\frac{1}{2} R_{it}(X_{tk}-X_{kt})+\frac{1}{2} R_{tk}(X_{ti}-X_{it});\\
%&\label{eq8}\frac 12\Delta_XS=\lambda S-\abs{\operatorname{Ric}}^2.
%\end{eqnarray}
%\end{lemma}
From the work of Cao and Chen (see \cite{CaoChen}, Lemma 3.1 and  equation (4.1); see also \cite{CMMRGet}), we have the validity of the following integrability conditions:

\begin{theorem}\label{caochenth} If $\pa{M, g, f}$ is a gradient Ricci soliton with potential function $f$, then the Cotton tensor, the Weyl tensor, the Bach tensor, the potential and the tensor $D$ satisfy the conditions:
  \begin{align}
     &C_{ijk}+f_t W_{tijk} = D_{ijk}, \label{firstCaoChen}\\     &B_{ij} = \frac{1}{m-2}\sq{D_{ijk, k}+\pa{\frac{m-3}{m-2}}f_tC_{jit}}. \label{secondCaoChen}
  \end{align}
\end{theorem}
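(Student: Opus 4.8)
The plan is to establish \eqref{firstCaoChen} by a direct computation starting from the soliton equation, and then to obtain \eqref{secondCaoChen} by taking a divergence of \eqref{firstCaoChen} and feeding it into the definition of the Bach tensor.

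For the first identity, I would start from the gradient soliton equation \eqref{eq1g} written as $R_{ij}=\lambda\delta_{ij}-f_{ij}$ and differentiate it, obtaining $R_{ij,k}=-f_{ijk}$ since $\lambda\delta_{ij}$ is parallel. Hence $R_{ij,k}-R_{ik,j}=-(f_{ijk}-f_{ikj})=-f_tR_{tijk}$, where the last equality is exactly the commutation rule \eqref{ThirdDerivFunctionRiem}. Substituting this into the definition \eqref{def_Cotton_comp} of the Cotton tensor, expanding $f_tR_{tijk}$ by means of the decomposition \eqref{Riemann_Weyl}, and finally replacing $S_k$ through the soliton identity \eqref{eq3g} in the form $S_k=2f_tR_{tk}$, I expect every term except $f_tW_{tijk}$ to reassemble precisely into the three $\delta$-blocks of $D_{ijk}$; the coefficient matching $\tfrac{1}{m-2}-\tfrac{1}{m-1}=\tfrac{1}{(m-1)(m-2)}$ is what makes the $f_tR_{tk}\delta_{ij}$ terms combine correctly. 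This yields $C_{ijk}+f_tW_{tijk}=D_{ijk}$ and is essentially bookkeeping of $\delta$-contractions once $S_k=2f_tR_{tk}$ is inserted.

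For the second identity, I would take the divergence in $k$ of \eqref{firstCaoChen}, getting $C_{ijk,k}=D_{ijk,k}-f_{tk}W_{tijk}-f_tW_{tijk,k}$, and insert this into the symmetric form $B_{ij}=\frac{1}{m-2}\pa{C_{ijk,k}+R_{kl}W_{ikjl}}$ of the Bach tensor, whose legitimacy rests on \eqref{SymmDivCotton}. The term $f_{tk}W_{tijk}$ is handled by writing $f_{tk}=\lambda\delta_{tk}-R_{tk}$ from \eqref{eq1g}: the $\lambda$-part vanishes by trace-freeness of $W$ (namely $W_{kijk}=0$), while the surviving $R_{tk}W_{tijk}$ equals $-R_{kl}W_{ikjl}$ after relabelling and using the antisymmetry $W_{kijl}=-W_{ikjl}$, so it cancels the curvature term $R_{kl}W_{ikjl}$ of the Bach tensor. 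What remains is $B_{ij}=\frac{1}{m-2}\pa{D_{ijk,k}-f_tW_{tijk,k}}$.

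The last and most delicate step is to rewrite the Weyl divergence $W_{tijk,k}$ in terms of the Cotton tensor. Starting from \eqref{def_Cotton_comp_Weyl}, which expresses the divergence of $W$ over its \emph{first} slot, I would transport the contracted derivative into the last slot using the pair symmetry $W_{tijk}=W_{jkti}$ together with the first-pair antisymmetry, arriving at $W_{tijk,k}=\frac{m-3}{m-2}C_{jti}$. The skew-symmetry $C_{jti}=-C_{jit}$ of the Cotton tensor then converts $-f_tW_{tijk,k}$ into $\frac{m-3}{m-2}f_tC_{jit}$, which is precisely \eqref{secondCaoChen}. I expect the main obstacle to be purely organizational rather than conceptual: keeping the Weyl symmetries and the $\delta$-contractions free of sign errors, in particular correctly locating the divergence index when passing from $W_{tijk,t}$ to $W_{tijk,k}$.
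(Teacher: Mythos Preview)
Your proposal is correct and follows essentially the same route as the paper. The paper does not prove Theorem~\ref{caochenth} directly (it is attributed to Cao--Chen), but your argument coincides with the specialization to $u=0$ of the paper's ``first proof'' of the more general Theorems~\ref{TH_FirstConditionCGRS} and~\ref{CGRS_TH_SecCond}: for \eqref{firstCaoChen} one skew-symmetrizes the differentiated soliton equation, inserts the Weyl decomposition and the identity $S_k=2f_tR_{tk}$; for \eqref{secondCaoChen} one takes the $k$-divergence of \eqref{firstCaoChen}, uses $f_{tk}=\lambda\delta_{tk}-R_{tk}$ to cancel the $R_{kl}W_{ikjl}$ term in the Bach tensor, and converts $W_{tijk,k}$ into $-\tfrac{m-3}{m-2}C_{jit}$ via the pair symmetry $W_{tijk}=W_{kjit}$ and \eqref{def_Cotton_comp_Weyl}, exactly as you outline.
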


\begin{rem} From \eqref{firstCaoChen} we deduce
\begin{equation}
f_tC_{tij} = f_tD_{tij}.
\end{equation}
\end{rem}
%The interesting fact is that \eqref{FirstCond_GN}, \eqref{SecondCond_GN} and \eqref{firstCaoChen}, \eqref{secondCaoChen} are strictly related ***.

%For gradient Ricci solitons we can use the potential $f$ to write some commutation rules in a more effective way; in particular we have
%
%\begin{align}
%  &R_{ij, k}-R_{ik, j} = f_{ikj}-f_{ijk}= -f_tR_{tijk}=f_tR_{tikj},
%\end{align}
%from which we deduce, tracing and using Schur's identity $2S_i = R_{ti, t}$, the fundamental identity
%\begin{equation}\label{fundamentalEqGradSolitonS_i}
%  S_i = 2f_tR_{ti};
%\end{equation}
%the Cotton tensor can then be written as
%\begin{equation}
%  C_{ijk} = -f_tR_{tijk}+\frac{1}{m-1}f_t\pa{R_{tj}\delta_{ik}-R_{tk}\delta_{ij}}.
%\end{equation}

Moreover, letting $[ijk]$ denote a summed cyclic permutation of $i, j, k$ (for example $T_{[ijk]} = T_{ijk}+T_{jki}+T_{kij}$), a long but straightforward calculation shows that for the tensor $D$ the following holds:
\begin{lemma}
Let $(M, g, \nabla f)$ be a gradient Ricci soliton structure on $\varrg$. Then the following identities hold:
\begin{equation}
D_{[ijk]}=0;
\end{equation}
\begin{equation}
D_{i[jk, t]}= \frac{1}{m-2}\sq{f_l\pa{C_{lkt}\delta_{ij}+C_{ltj}\delta_{ik}+C_{ljk}\delta_{it}}-\pa{f_jC_{ikt}+f_kC_{itj}+f_tC_{ijk}}}
\end{equation}
\begin{equation}
D_{i[jk, t]} = \frac{1}{m-2}\sq{f_l\pa{D_{lkt}\delta_{ij}+D_{ltj}\delta_{ik}+D_{ljk}\delta_{it}}-f_j\pa{D_{ikt}-f_sW_{sikt}}-f_k\pa{D_{itj}-f_sW_{sitj}}-f_t\pa{D_{ijk}-f_sW_{sijk}}}.
\end{equation}
\begin{equation}
C_{i[jk, t]} =  R_{sj}W_{sikt}+R_{sk}W_{sitj}+R_{st}W_{sijk}.
\end{equation}
\begin{align}
D_{i[jk, t]}  &= \frac{m-6}{2(m-3)}\pa{R_{sj}W_{sikt}+R_{sk}W_{sitj}+R_{st}W_{sijk}-C_{i[jk, t]} } \\ \nonumber &+\frac{1}{m-2}\sq{f_l\pa{C_{lkt}\delta_{ij}+C_{ltj}\delta_{ik}+C_{ljk}\delta_{it}}-\pa{f_jC_{ikt}+f_kC_{itj}+f_tC_{ijk}}}.
\end{align}

\end{lemma}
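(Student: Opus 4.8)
The plan is to derive all five identities from the first integrability condition \eqref{firstCaoChen}, namely $D_{ijk}=C_{ijk}+f_sW_{sijk}$, together with the commutation rules of Section \ref{SecCommy} and the soliton relation \eqref{eq1g} rewritten as $f_{ij}=\lambda\delta_{ij}-R_{ij}$. The first identity $D_{[ijk]}=0$ is immediate: cyclically summing \eqref{firstCaoChen} gives $D_{[ijk]}=C_{[ijk]}+f_s\pa{W_{sijk}+W_{sjki}+W_{skij}}$, and both summands vanish, the first by \eqref{PermutCiclCotton} and the second by the first Bianchi identity for the Weyl tensor. (Alternatively one checks the cyclic cancellation directly on the definition of $D$, using the symmetry of $\ricc$.)

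I would next establish the fourth identity $C_{i[jk,t]}=R_{sj}W_{sikt}+R_{sk}W_{sitj}+R_{st}W_{sijk}$, which in fact holds on any Riemannian manifold. Writing $C_{ijk,t}=A_{ij,kt}-A_{ik,jt}$ and taking the cyclic sum in $j,k,t$, the six terms regroup into the three commutators $A_{ij,kt}-A_{ij,tk}$, $A_{ik,tj}-A_{ik,jt}$ and $A_{it,jk}-A_{it,kj}$. Applying the commutation rule $A_{ab,cd}-A_{ab,dc}=R_{lacd}A_{lb}+R_{lbcd}A_{la}$ for the Schouten tensor and noting that the terms carrying the common factor $A_{li}$ cancel by \eqref{FirstBianchiRiem}, one is left with $C_{i[jk,t]}=R_{likt}A_{lj}+R_{litj}A_{lk}+R_{lijk}A_{lt}$. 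Inserting the decomposition \eqref{Riemann_Weyl_Schouten} of the curvature into its Weyl and Schouten parts, the Schouten contributions cancel termwise by the symmetry of $\mathrm{A}$, while the scalar pieces of $A_{lj}=R_{lj}-\tfrac{S}{2(m-1)}\delta_{lj}$ recombine into a first Bianchi sum for $W$ and vanish, leaving precisely $\mathcal{R}:=R_{sj}W_{sikt}+R_{sk}W_{sitj}+R_{st}W_{sijk}$.

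For the second identity I would differentiate \eqref{firstCaoChen} to obtain $D_{ijk,t}=C_{ijk,t}+f_{st}W_{sijk}+f_sW_{sijk,t}$ and sum cyclically in $j,k,t$. The derivative-free Weyl term $f_{st}W_{sijk}+f_{sj}W_{sikt}+f_{sk}W_{sitj}$ is treated with $f_{ij}=\lambda\delta_{ij}-R_{ij}$: its $\lambda$-part is a first Bianchi sum for $W$ and drops, so this term equals $-\mathcal{R}$, which by the fourth identity is $-C_{i[jk,t]}$ and cancels the $C_{i[jk,t]}$ already present. The remaining cyclic sum $f_s\pa{W_{sijk,t}+W_{sikt,j}+W_{sitj,k}}$ is exactly the contraction with $f_s$ of the ``fake'' second Bianchi identity \eqref{fake2ndBianchiWeyl}, which yields
\[
\frac{1}{m-2}\sq{f_l\pa{C_{lkt}\delta_{ij}+C_{ltj}\delta_{ik}+C_{ljk}\delta_{it}}-\pa{f_jC_{ikt}+f_kC_{itj}+f_tC_{ijk}}},
\]
i.e.\ the asserted formula.

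The third identity then follows from the second by two substitutions: replacing $f_lC_{l\cdot\cdot}$ with $f_lD_{l\cdot\cdot}$ through the Remark after Theorem \ref{caochenth} (which gives $f_sC_{sjk}=f_sD_{sjk}$), and replacing each of $C_{ikt},C_{itj},C_{ijk}$ via \eqref{firstCaoChen} in the form $C_{ikt}=D_{ikt}-f_sW_{sikt}$. The fifth identity is a rewriting of the second: since the fourth identity gives $\mathcal{R}-C_{i[jk,t]}=0$, one may add $\tfrac{m-6}{2(m-3)}\pa{\mathcal{R}-C_{i[jk,t]}}$ to the second without altering it (equivalently, it records the alternative computation that keeps the $-\mathcal{R}$ term unsimplified and expresses the Cotton derivatives through the Weyl divergence \eqref{def_Cotton_comp_Weyl}). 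The main obstacle is the bookkeeping in the fourth and second identities, namely matching the index pattern of \eqref{fake2ndBianchiWeyl} to the cyclic Weyl-derivative sum and verifying that every non-Weyl term in the reduction of $C_{i[jk,t]}$ cancels, a lengthy but mechanical computation relying only on the symmetries of $\ricc$ and $\mathrm{A}$ and on \eqref{FirstBianchiRiem}.
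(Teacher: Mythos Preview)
Your proposal is correct and, since the paper itself gives no details beyond ``a long but straightforward calculation shows'', your route through \eqref{firstCaoChen}, the commutation rule for $\nabla^2\mathrm{A}$, the soliton relation $f_{ij}=\lambda\delta_{ij}-R_{ij}$, and the fake second Bianchi identity \eqref{fake2ndBianchiWeyl} is precisely the natural way to flesh out that calculation. One minor imprecision: in the reduction of $C_{i[jk,t]}$, the scalar pieces of $A_{lj}$ produce a first Bianchi sum for the \emph{Riemann} tensor (not the Weyl), and the remaining ``Schouten contributions'' cancel because $A_{lk}R_{lj}=A_{lj}R_{lk}$, which follows from $\mathrm{A}=\ricc-\tfrac{S}{2(m-1)}g$; your conclusion is unaffected.
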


\section{Conformally Einstein metrics}\label{SecConfy}

In this short section we first recall the definition of a conformally Einstein manifold; then we present the integrability conditions of Gover and Nurowski and we prove equation \eqref{PR_CE_LaplacianScalarEq}, which relates the Laplacian of the scalar curvature of a conformally Einstein manifold to $u$ (the exponent of the stretching factor) and its covariant derivatives.

\begin{defi}
  A Riemannian manifold $\varrg$ is said to be \emph{conformally Einstein} if there exists a conformal change of the metric  $\tilde{g}=e^{2u} g, \, u\in\cinf$, such that $\pa{M, \tilde{g}}$ is Einstein, i.e. \begin{equation}\label{CE_global_tilde}
  \widetilde{\ricc} = \frac{\tilde{S}}{m}\tilde{g} = \lambda \tilde{g}, \qquad \lambda \in \erre.
  \end{equation}

\end{defi}
Since in an orthonormal frame \eqref{CE_global_tilde} becomes
 \begin{equation}\label{CE_comp_tilde}
   \tilde{R}_{ij}=\frac{\tilde{S}}{m}\delta_{ij} = \lambda \delta_{ij},
 \end{equation}
 using equations \eqref{RicciexpComponents} and \eqref{scalarExp} we can easily deduce that $\varrg$ is conformally Einstein if and only if there exists a solution $u\in \cinf$ of the equation
\begin{equation}\label{CE_comp_Riccii}
  R_{ij} -(m-2)u_{ij} + (m-2)u_iu_j = \frac{1}{m}\sq{S-(m-2)\Delta u+(m-2)\abs{\nabla u}^2}\delta_{ij},
\end{equation}
with
\begin{equation}\label{CE_tracedlambda}
S-2(m-1)\Delta u -(m-1)(m-2)\abs{\nabla u}^2 = \lambda m e^{2u}.
\end{equation}

Equation \eqref{CE_comp_Riccii} can be also written in terms of the Schouten tensor as
\begin{equation}\label{CE_comp_Schouten}
  A_{ij} -(m-2)u_{ij} + (m-2)u_iu_j = \frac{1}{m}\sq{\frac{(m-2)S}{2(m-1)}-(m-2)\Delta u+(m-2)\abs{\nabla u}^2}\delta_{ij}.
\end{equation}
\begin{rem}
Note that equation \eqref{CE_tracedlambda} is just the trace of \eqref{CE_comp_Riccii}. The system \eqref{CE_comp_Riccii}-\eqref{CE_tracedlambda} is equivalent to the single equation
\begin{equation}\label{CE_singleEq}
R_{ij} -(m-2)u_{ij} + (m-2)u_iu_j = \sq{\Delta u+(m-2)\abs{\nabla u}^2+\lambda e^{2u}}\delta_{ij}.
\end{equation}

\end{rem}

\begin{rem}
  The global version of equation \eqref{CE_comp_Riccii} is
  \begin{equation}
    \ricc -(m-2)\hess\pa{u} + (m-2)du\otimes du = \frac{1}{m}\sq{S-(m-2)\Delta u+(m-2)\abs{\nabla u}^2}g.
  \end{equation}
\end{rem}

We have the following  proposition, reported in Gover and Nurowski (\cite{GoverNurowski}, Proposition 2.1), which describes the integrability conditions of conformally Einstein metrics:
\begin{proposition}\label{Prop_Gover_Nurowski2.1}
  If $\varrg$ is a conformally Einstein Riemannian manifold, then the Cotton tensor, the Weyl tensor, the Bach tensor and the exponent $u$ of the stretching factor satisfy the conditions:
 \begin{align}
   &C_{ijk} -(m-2)u_tW_{tijk}=0, \label{FirstCond_GN}\\ &B_{ij}-(m-4)u_tu_kW_{itjk}=0\label{SecondCond_GN}.
 \end{align}
\end{proposition}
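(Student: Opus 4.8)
The plan is to read off both conditions directly from the conformal transformation laws \eqref{Cottonlexp} and \eqref{BachExpComp} for the Cotton and Bach tensors, once I observe that both of these tensors vanish identically on any Einstein manifold. The whole argument is thus essentially a substitution, with only the reconciliation of the Bach formula requiring genuine work.

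First I would verify the vanishing on $\pa{M, \tilde g}$. By \eqref{CE_comp_tilde} we have $\tilde R_{ij} = \lambda \delta_{ij}$, so $\tilde S = m\lambda$ is constant and, by \eqref{def_Schouten}, the Schouten tensor $\tilde A_{ij} = \pa{\lambda - \frac{m\lambda}{2\pa{m-1}}}\delta_{ij}$ is a constant multiple of the metric; being parallel it is trivially Codazzi, whence $\tilde C_{ijk} = \tilde A_{ij, k} - \tilde A_{ik, j} = 0$. For the Bach tensor I would then use \eqref{def_Bach_comp}: the term $\tilde C_{jik, k}$ vanishes since $\tilde C \equiv 0$, while $\tilde R_{kl}\tilde W_{ikjl} = \lambda \tilde W_{ikjk} = 0$ because the Weyl tensor is totally trace-free; hence $\tilde B_{ij} = 0$.

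Next I would substitute $\tilde C_{ijk} = 0$ into \eqref{Cottonlexp}; since the stretching factor never vanishes, this immediately yields the first condition \eqref{FirstCond_GN}, namely $C_{ijk} - \pa{m-2}u_tW_{tijk} = 0$. Substituting $\tilde B_{ij} = 0$ into \eqref{BachExpComp} gives instead $B_{ij} = -\pa{m-4}\sq{u_tu_kW_{tikj} + \frac{1}{m-2}\pa{C_{ijt}+C_{jit}}u_t}$, which still carries Cotton terms and so is not yet in the form claimed in \eqref{SecondCond_GN}.

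The last and most delicate step is to recast this right-hand side as $\pa{m-4}u_tu_kW_{itjk}$. Here I would feed the already established first condition back in, writing $C_{ijt} = \pa{m-2}u_sW_{sijt}$ and $C_{jit} = \pa{m-2}u_sW_{sjit}$, so that the Cotton contribution becomes Weyl terms contracted against the \emph{symmetric} tensor $u_su_t$. Using the Weyl symmetries $W_{ijkt} = -W_{jikt} = -W_{ijtk} = W_{ktij}$ one checks that $u_tu_kW_{tikj} = u_tu_kW_{itjk}$, and that after relabelling the dummy indices $u_su_t\pa{W_{sijt}+W_{sjit}} = -2u_su_tW_{itjs}$; consequently $\pa{C_{ijt}+C_{jit}}u_t = -2\pa{m-2}u_tu_kW_{itjk}$, the bracket collapses, and $B_{ij} = -\pa{m-4}u_tu_kW_{itjk} + 2\pa{m-4}u_tu_kW_{itjk} = \pa{m-4}u_tu_kW_{itjk}$, which is exactly \eqref{SecondCond_GN}. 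The main obstacle is precisely this bookkeeping of the Weyl index symmetries together with the cancellation it produces; every other step is a direct substitution.
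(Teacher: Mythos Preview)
Your proof is correct. The index manipulation in the last paragraph checks out: from $W_{sijt}=-W_{isjt}$ and the $s\leftrightarrow t$ relabelling under the symmetric factor $u_su_t$ one gets $u_su_tW_{sijt}=-u_su_tW_{itjs}$, and from $W_{sjit}=W_{itsj}=-W_{itjs}$ one gets $u_su_tW_{sjit}=-u_su_tW_{itjs}$, so the sum is indeed $-2u_su_tW_{itjs}$ and the bracket collapses exactly as you claim.

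Your route, however, is not the one the paper sketches here. The paper obtains \eqref{FirstCond_GN} by differentiating the structural equation \eqref{CE_comp_Schouten} in the background metric, skew-symmetrising and tracing, and then derives \eqref{SecondCond_GN} by taking the divergence of \eqref{FirstCond_GN} and invoking the definition of the Bach tensor together with \eqref{CE_comp_Riccii}; this is the ``direct'' computation carried out in full generality in Theorems~\ref{TH_FirstConditionCGRS} and~\ref{CGRS_TH_SecCond}. You instead use the conformal transformation laws \eqref{Cottonlexp} and \eqref{BachExpComp} as black boxes, together with the elementary observation that $\tilde C$ and $\tilde B$ vanish on the Einstein side. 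This is precisely the strategy the paper labels ``second proof'' in Section~\ref{sec_ConfGradRS}. Your argument is shorter and more conceptual, but it relies on the heavy preparatory work of Section~\ref{SecTrans}; the paper's direct argument is self-contained from the structural equation and is what generalises uniformly to the soliton cases treated later.
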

The proof of \eqref{FirstCond_GN} starts from the covariant derivative of \eqref{CE_comp_Schouten}; one then skew-symmetrizes, traces and rearranges (after a lot of simple but long calculations). Taking the divergence of  \eqref{FirstCond_GN}, using the definition of the Bach tensor \eqref{BachExpComp} and equation \eqref{CE_comp_Riccii} one gets \eqref{SecondCond_GN}. We do not provide the details here since we shall consider later a general computation including this proposition as a particular case (see \ref{TH_FirstConditionCGRS} and \ref{CGRS_TH_SecCond}).
The interesting fact is that \eqref{FirstCond_GN}, \eqref{SecondCond_GN} and \eqref{firstCaoChen}, \eqref{secondCaoChen} are strictly related, as it will become apparent in a short while.

Taking the covariant derivative of \eqref{CE_tracedlambda} and using \eqref{CE_comp_Riccii} to substitute the Hessian of $u$ we deduce the interesting relation
\begin{equation}\label{CE_nablaDeltau}
u_{ttk} = \frac{S_k}{2(m-1)} - u_tR_{tk} - \frac{1}{m(m-1)}S u_k +\pa{\frac{m+2}{m}} \Delta u\, u_k + \pa{\frac{m-2}{m}}\abs{\nabla u}^2 u_k,
\end{equation}
which implies
\begin{equation}\label{CE_gnablaunabladeltau}
g\pa{\nabla u, \nabla \Delta u} = \frac{1}{2(m-1)}g\pa{\nabla S, \nabla u} - \ricc\pa{\nabla u, \nabla u}-\frac{1}{m(m-1)}S\abs{\nabla u}^2+\pa{\frac{m+2}{m}} \Delta u\abs{\nabla u}^2+ \pa{\frac{m-2}{m}}\abs{\nabla u}^4.
\end{equation}

Now we use the fact that $\tilde{S}$ is constant and thus
\begin{equation}\label{CE_deltatildeSzero}
e^{4u}\tilde{\Delta}\tilde{S}=0;
\end{equation}
Moreover, we observe that, from equation \eqref{CE_comp_Riccii},
\begin{equation}\label{CE_ricchess}
\ricc\pa{\nabla u, \nabla u} - (m-2)\hess(u)\pa{\nabla u, \nabla u} = \frac 1m \abs{\nabla u}^2\sq{S-(m-2)\Delta u-(m-1)(m-2)\abs{\nabla u}^2}.
\end{equation}
Using \eqref{CE_deltatildeSzero}, \eqref{CE_ricchess} and \eqref{CE_gnablaunabladeltau} in \eqref{LaplacianscalarExp} and simplifying we deduce the following
\begin{proposition}\label{PR_CE_LaplacianScalarEq}
Let $\varrg$ be a conformally Einstein manifold. Then
\begin{align}\label{CE_LaplacianScalarEq}
\frac12\sq{\Delta S-(m-2)g\pa{\nabla S, \nabla u}} &= (m-1)\Delta^2u+(m-1)(m-2)\abs{\hess(u)}^2+S\Delta u-2(m-1)\pa{\Delta u}^2\\ \nonumber &+\pa{\frac{m+2}{m}}\abs{\nabla u}^2\sq{S-2(m-1)\Delta u-(m-1)(m-2)\abs{\nabla u}^2}.
\end{align}
\end{proposition}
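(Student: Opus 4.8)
The plan is to exploit the defining feature of a conformally Einstein manifold that is not yet used, namely that the Einstein metric $\tilde g$ has constant scalar curvature, so $\tilde\Delta\tilde S=0$ and hence $e^{4u}\tilde\Delta\tilde S=0$, as recorded in \eqref{CE_deltatildeSzero}. I would therefore take the transformation law \eqref{LaplacianscalarExp} for the Laplacian of the scalar curvature and set its right-hand side equal to zero. This produces a single scalar identity relating $\Delta S$, $\Delta^2 u$, $\abs{\hess(u)}^2$, the quadratic curvature terms $\ricc\pa{\nabla u,\nabla u}$ and $\hess(u)\pa{\nabla u,\nabla u}$, the term $g\pa{\nabla u,\nabla\Delta u}$, and several products of $S$, $\Delta u$ and $\abs{\nabla u}^2$. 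The target formula \eqref{CE_LaplacianScalarEq} contains none of $\ricc\pa{\nabla u,\nabla u}$, $\hess(u)\pa{\nabla u,\nabla u}$ or $g\pa{\nabla u,\nabla\Delta u}$, so the whole task is to eliminate exactly these three objects.

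The first reduction is to remove $g\pa{\nabla u,\nabla\Delta u}$. For this I would substitute \eqref{CE_gnablaunabladeltau}, which re-expresses $g\pa{\nabla u,\nabla\Delta u}$ through $g\pa{\nabla S,\nabla u}$, $\ricc\pa{\nabla u,\nabla u}$ and scalar products of $S$, $\Delta u$, $\abs{\nabla u}^2$ and $\abs{\nabla u}^4$. After this insertion, the only curvature-type quantities left are $\ricc\pa{\nabla u,\nabla u}$ and $\hess(u)\pa{\nabla u,\nabla u}$.

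The key structural observation — and the heart of the computation — is that, once \eqref{CE_gnablaunabladeltau} is inserted, the coefficients of $\ricc\pa{\nabla u,\nabla u}$ and $\hess(u)\pa{\nabla u,\nabla u}$ assemble exactly into the combination appearing in \eqref{CE_ricchess}. Indeed, the original coefficient $-2(m-1)(m-2)$ of $\ricc\pa{\nabla u,\nabla u}$ and the contribution $+4(m-1)(m-4)$ produced by the $g\pa{\nabla u,\nabla\Delta u}$ substitution add up to $2(m-1)(m-6)$, while the coefficient of $\hess(u)\pa{\nabla u,\nabla u}$ is $-2(m-1)(m-2)(m-6)$, which is precisely $-(m-2)$ times the former. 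Hence the two terms combine as $2(m-1)(m-6)\sq{\ricc\pa{\nabla u,\nabla u}-(m-2)\hess(u)\pa{\nabla u,\nabla u}}$, and I would then invoke \eqref{CE_ricchess} to rewrite this as $\frac{2(m-1)(m-6)}{m}\abs{\nabla u}^2\sq{S-(m-2)\Delta u-(m-1)(m-2)\abs{\nabla u}^2}$, thereby eliminating all explicit Ricci and Hessian-of-$u$ quadratic terms at once.

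What remains is a purely algebraic collection of the scalar monomials $g\pa{\nabla S,\nabla u}$, $S\Delta u$, $\pa{\Delta u}^2$, $S\abs{\nabla u}^2$, $\Delta u\,\abs{\nabla u}^2$ and $\abs{\nabla u}^4$, alongside $\Delta S$, $\Delta^2u$ and $\abs{\hess(u)}^2$. I would gather the coefficient of each monomial and divide through by $2$: the coefficient of $g\pa{\nabla S,\nabla u}$ becomes $-(m-2)/2$, matching the left-hand side of \eqref{CE_LaplacianScalarEq}, while the leading terms assemble into $(m-1)\Delta^2u+(m-1)(m-2)\abs{\hess(u)}^2+S\Delta u-2(m-1)\pa{\Delta u}^2$. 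The main obstacle is entirely bookkeeping: one must check that the coefficient of $S\abs{\nabla u}^2$ collapses to $-\frac{2(m+2)}{m}$ and that of $\Delta u\,\abs{\nabla u}^2$ to $\frac{4(m-1)(m+2)}{m}$ (in the $e^{4u}\tilde\Delta\tilde S=0$ form), so that together with the $\abs{\nabla u}^4$ term they factor into the single expression $\pa{\frac{m+2}{m}}\abs{\nabla u}^2\sq{S-2(m-1)\Delta u-(m-1)(m-2)\abs{\nabla u}^2}$ displayed in \eqref{CE_LaplacianScalarEq}. No further geometric input is needed beyond \eqref{CE_deltatildeSzero}, \eqref{CE_gnablaunabladeltau} and \eqref{CE_ricchess}.
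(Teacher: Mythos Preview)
Your proposal is correct and follows exactly the route indicated in the paper: set $e^{4u}\tilde\Delta\tilde S=0$ via \eqref{CE_deltatildeSzero} in the transformation law \eqref{LaplacianscalarExp}, then use \eqref{CE_gnablaunabladeltau} and \eqref{CE_ricchess} to eliminate $g(\nabla u,\nabla\Delta u)$, $\ricc(\nabla u,\nabla u)$ and $\hess(u)(\nabla u,\nabla u)$, and finally collect the remaining scalar terms. Your detailed check that the coefficients of $\ricc(\nabla u,\nabla u)$ and $\hess(u)(\nabla u,\nabla u)$ combine into the precise multiple needed to invoke \eqref{CE_ricchess} is exactly the non-obvious step hidden behind the paper's word ``simplifying''.
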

\begin{rem}
Equation \eqref{CE_LaplacianScalarEq} can also be obtained by taking the Laplacian of both sides of \eqref{CE_tracedlambda}, using the divergence of equation \eqref{CE_nablaDeltau} and the classical Bochner-Weitzenb\"ock formula (see e.g. \cite{Besse}).
\end{rem}
\begin{rem}
Since, by equation \eqref{CE_tracedlambda},
\[
\pa{\frac{m+2}{m}}\abs{\nabla u}^2\sq{S-2(m-1)\Delta u-(m-1)(m-2)\abs{\nabla u}^2}= (m+2)\lambda e^{2u}\abs{\nabla u}^2,
\]
equation \eqref{CE_LaplacianScalarEq} can also be written as
\begin{align}\label{CE_LaplacianScalarEqwithLambda}
\frac12\sq{\Delta S-(m-2)g\pa{\nabla S, \nabla u}} &= S\Delta u-2(m-1)\pa{\Delta u}^2 +(m-1)\Delta^2u+(m-1)(m-2)\abs{\hess(u)}^2\\\nonumber &+(m+2)\lambda e^{2u}\abs{\nabla u}^2.
\end{align}
\end{rem}
\begin{rem}
If we take $u=\log v^{\frac{2}{m-2}}$, for some $v\in\cinf$, $v>0$, equation \eqref{CE_tracedlambda} becomes  the classical Yamabe equation
\[
\frac{4(m-1)}{m-2}\Delta v - Sv+\tilde{S}v^{\frac{m+2}{m-2}}=0,
\]
while equation \eqref{CE_comp_Riccii} becomes
\[
R_{ij}-2\frac{v_{ij}}{v}+\frac{2m}{m-2}\frac{v_iv_j}{v^2} = \frac 1m \sq{S-2\frac{\Delta v}{v}+\frac{2m}{m-2}\frac{\abs{\nabla v}^2}{v^2}}\delta_{ij}.
\]

\end{rem}

%***What happens computing $\tilde{\Delta} \tilde{R}_{ij} = 0$?***

%\section{Gradient Ricci solitons}
%***Espandere sezione sui solitoni***
%For gradient Ricci solitons we have (see \cite{CaoChen}, Lemma 3.1 and  equation (4.1))
%
%\begin{proposition} If $\pa{M, g, f}$ is a gradient Ricci soliton with potential function $f$, then the Cotton tensor, the Weyl tensor, the Bach tensor, the potential and the tensor $D$ satisfy the conditions:
%  \begin{align}
%     &C_{ijk}+f_t W_{tijk} = D_{ijk}, \label{firstCaoChen}\\     &B_{ij} = \frac{1}{m-2}\sq{D_{ijk, k}+\pa{\frac{m-3}{m-2}}f_tC_{jit}}. \label{secondCaoChen}
%  \end{align}
%\end{proposition}
%
%\begin{rem} From \eqref{firstCaoChen} we deduce
%\begin{equation}
%f_tC_{tij} = f_tD_{tij}.
%\end{equation}
%\end{rem}
%The interesting fact is that \eqref{FirstCond_GN}, \eqref{SecondCond_GN} and \eqref{firstCaoChen}, \eqref{secondCaoChen} are strictly related ***.

\section{Conformal gradient Ricci solitons}\label{sec_ConfGradRS}

In this section we introduce the notion of a conformal gradient Ricci soliton, inspired by the two particular cases of Ricci solitons and conformally Einstein metrics, in order to create a link between them.

\begin{defi}
  A Riemannian manifold $\varrg$ is said to be a \emph{conformal gradient Ricci soliton} if there exist a conformal change of the metric  $\tilde{g}=e^{2u} g, \, u\in\cinf$, a function $f\in \cinf$ and a constant $\lambda \in \erre$ such that $\pa{M, \tilde{g}}$ is a gradient Ricci soliton, i.e.
  \begin{equation}\label{CGRS_global_tilde}
  \widetilde{\ricc} +\widetilde{\operatorname{Hess}}(f)= \lambda\tilde{g}.
  \end{equation}
\end{defi}

In terms of the geometry of the manifold $\varrg$,  \eqref{CGRS_global_tilde} leads to the following
\begin{lemma}
$\varrg$ is a conformal gradient Ricci soliton if and only if there exist  $u\in\cinf$, a function $f\in \cinf$ and a constant $\lambda \in \erre$ such that
\begin{align}\label{Eq_CGRSGlobal}
\ricc - (m-2)\hess\pa{u}+&(m-2)du\otimes du +\hess\pa{f}-\pa{df\otimes du+du\otimes df} = \\ \nonumber &\frac{1}{m}\sq{S-(m-2)\pa{\Delta u-\abs{\nabla u}^{2}}+\Delta f-2g\pa{\nabla f, \nabla u}}g
\end{align}
and
\begin{align}\label{Eq_CGRSGlobalTraced}
S-2(m-1)\Delta u - (m-1)(m-2)\abs{\nabla u}^{2}+\Delta f +(m-2)g\pa{\nabla f, \nabla u} = m\lambda e^{2u}.
\end{align}

\end{lemma}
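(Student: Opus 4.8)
The plan is to substitute the conformal transformation laws for the Ricci tensor and for the Hessian of a function directly into the defining identity \eqref{CGRS_global_tilde}, so as to re-express every tilded quantity in terms of the background metric $g$, and then to decompose the resulting equation between $(0,2)$-tensors into the part proportional to $g$ and the remainder. Since \eqref{CGRS_global_tilde} asserts that $\widetilde{\ricc}+\widetilde{\operatorname{Hess}}(f)$ equals the multiple $\lambda\tilde g=\lambda e^{2u}g$ of the metric, its geometric content is simply that a certain symmetric tensor built from $\ricc$, $\hess(u)$, $\hess(f)$ and first derivatives of $u$ and $f$ is pointwise a scalar multiple of $g$; the two asserted equations are precisely the trace-free statement and the scalar (trace) statement of this single fact.

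Concretely, inserting \eqref{Ricciexp} for $\widetilde{\ricc}$ and the global transformation law for $\widetilde{\operatorname{Hess}}(f)$ (the displayed identity immediately preceding its component version \eqref{HessianExpComp}) into \eqref{CGRS_global_tilde}, and recalling $\tilde g=e^{2u}g$ from \eqref{eq_conformal_change_of_g}, I would collect the terms not proportional to $g$ into
\[
T:=\ricc-(m-2)\hess(u)+(m-2)du\otimes du+\hess(f)-\pa{df\otimes du+du\otimes df},
\]
so that \eqref{CGRS_global_tilde} becomes
\[
T+\sq{-\Delta u-(m-2)\abs{\nabla u}^2+g\pa{\nabla f,\nabla u}}g=\lambda e^{2u}g.
\]
This exhibits $T$ as a multiple of $g$, whence $T=\tfrac1m\operatorname{tr}(T)\,g$. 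Computing the $g$-trace term by term gives $\operatorname{tr}(T)=S-(m-2)\Delta u+(m-2)\abs{\nabla u}^2+\Delta f-2g\pa{\nabla f,\nabla u}$, and inserting this into $T=\tfrac1m\operatorname{tr}(T)\,g$ yields exactly \eqref{Eq_CGRSGlobal}. Taking instead the full $g$-trace of the second displayed identity above, i.e. $\operatorname{tr}(T)+m\sq{-\Delta u-(m-2)\abs{\nabla u}^2+g\pa{\nabla f,\nabla u}}=m\lambda e^{2u}$, and collecting the $\Delta u$, $\abs{\nabla u}^2$ and $g\pa{\nabla f,\nabla u}$ contributions produces the coefficients $-2(m-1)$, $-(m-1)(m-2)$ and $(m-2)$ of \eqref{Eq_CGRSGlobalTraced}.

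For the converse one runs the argument backwards: \eqref{Eq_CGRSGlobal} forces $T$ to equal the multiple $\tfrac1m\operatorname{tr}(T)\,g$ of the metric, while \eqref{Eq_CGRSGlobalTraced} fixes the value of $\tfrac1m\operatorname{tr}(T)$ to be $\lambda e^{2u}+\Delta u+(m-2)\abs{\nabla u}^2-g\pa{\nabla f,\nabla u}$, and reinserting these into \eqref{Ricciexp} and the transformation law for $\widetilde{\operatorname{Hess}}(f)$ recovers \eqref{CGRS_global_tilde}. The computation is entirely mechanical, and the only delicate point is the bookkeeping of the scalar coefficients of $g$: one must combine with care the $-\Delta u$ and $-(m-2)\abs{\nabla u}^2$ terms coming from the Ricci law with the $g\pa{\nabla f,\nabla u}$ term from the Hessian law, both when isolating $T$ and when forming the trace, so that the numerical coefficients in \eqref{Eq_CGRSGlobalTraced} come out correctly.
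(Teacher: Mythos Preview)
Your proposal is correct and follows essentially the same route as the paper: both arguments insert the conformal transformation laws \eqref{Ricciexp} and \eqref{HessianExpComp} into \eqref{CGRS_global_tilde} and then separate the trace from the trace-free part. The paper carries this out in components (first deriving the traced identity \eqref{Eq_CGRSGlobalTraced} via \eqref{scalarExp} and \eqref{LaplacianExpComp}, then using it to obtain \eqref{CGRS_comp_Ricci}), whereas you work globally and reverse the order, but the substance is identical.
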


\begin{proof}
In an orthonormal frame \eqref{CGRS_global_tilde} becomes
 \begin{equation}\label{CGRS_comp_tilde}
   \tilde{R_{ij}}+\tilde{f_{ij}}=\lambda \delta_{ij},
 \end{equation}
 while tracing \eqref{CGRS_global_tilde} we deduce that
  \begin{equation}\label{Eq_lambda_tildeS_tildeDeltaf}
   m\lambda=\tilde{S} +\tilde{\Delta}f.
 \end{equation}
 Multiplying both sides of \eqref{Eq_lambda_tildeS_tildeDeltaf} through $e^{2u}$ and using \eqref{scalarExp} and \eqref{LaplacianExpComp} we get \eqref{Eq_CGRSGlobalTraced}; multiplying both sides of \eqref{CGRS_comp_tilde} by $e^{2u}$, using \eqref{RicciexpComponents}, \eqref{HessianExpComp} and \eqref{Eq_CGRSGlobalTraced} we deduce \begin{equation}\label{CGRS_comp_Ricci}
R_{ij} -(m-2)u_{ij} + (m-2)u_iu_j +f_{ij} -\pa{f_{i}u_{j}+f_{j}u_{i}}=\frac{1}{m}\sq{S-(m-2)\pa{\Delta u-\abs{\nabla u}^{2}}+\Delta f-2g\pa{\nabla f, \nabla u}}\delta_{ij},
\end{equation}
that is \eqref{Eq_CGRSGlobal}.
 \end{proof}

% Using equations \eqref{RicciexpComponents}, \eqref{scalarExp} and \eqref{HessianExpComp} we  deduce that $\varrg$ is a conformal gradient Ricci soliton if and only if there exists a solution $u\in \cinf$ of the equation

%\begin{equation}\label{CGRS_comp_Ricci}
%R_{ij} -(m-2)u_{ij} + (m-2)u_iu_j +f_{ij} -\pa{f_{i}u_{j}+f_{j}u_{i}}=\frac{1}{m}\sq{S-(m-2)\pa{\Delta u-\abs{\nabla u}^{2}}+\Delta f-2g\pa{\nabla f, \nabla u}}\delta_{ij}
%\end{equation}
Note that equation \eqref{CGRS_comp_Ricci} can be written, using the Schouten tensor, as
\begin{equation}\label{CGRS_comp_Schouten}
A_{ij} -(m-2)u_{ij} + (m-2)u_iu_j +f_{ij} -\pa{f_{i}u_{j}+f_{j}u_{i}}=\frac{1}{m}\sq{\frac{m-2}{2(m-1)}S-(m-2)\pa{\Delta u-\abs{\nabla u}^{2}}+\Delta f-2g\pa{\nabla f, \nabla u}}\delta_{ij}.
\end{equation}

For a conformal gradient Ricci soliton we define the tensor $D^{\pa{u, f}}$ as follows:
\begin{align}\label{tensorD_u_f_bestVersion}
D^{\pa{u, f}}_{ijk} &= \frac{1}{m-2}\pa{f_kR_{ij}-f_jR_{ik}}+\frac{1}{(m-1)(m-2)}f_t\pa{R_{tk}\delta_{ij}-R_{tj}\delta_{ik}}-\frac{S}{(m-1)(m-2)}\pa{f_k\delta_{ij}-f_j \delta_{ik}} \\ \nonumber &+\frac{\Delta u}{m-1}\pa{f_k\delta_{ij}-f_j\delta_{ik}}-\pa{f_ku_{ij}-f_ju_{ik}} +u_i\pa{f_ku_j-f_ju_k}-\frac{1}{m-1}\pa{f_tu_{tk}\delta_{ij}-f_tu_{tj}\delta_{ik}} \\ \nonumber&+\frac{1}{m-1}\pa{f_tu_t}\pa{u_k\delta_{ij}-u_j\delta_{ik}}-\frac{1}{m-1}\abs{\nabla u}^2\pa{f_k\delta_{ij}-f_j\delta_{ik}}.
\end{align}

\begin{rem} A computation using equation \eqref{CGRS_comp_Ricci} shows that the tensor $D^{\pa{u, f}}$ can also be written as follows:
\begin{align}\label{tensorD_u_f}
  D^{\pa{u, f}}_{ijk} &=\frac{1}{(m-1)(m-2)}\sq{f_t\pa{f_{tj}\delta_{ik}-f_{tk}\delta_{ij}}-\abs{\nabla f}^2\pa{u_j\delta_{ik}-u_k\delta_{ij}}+\pa{f_tu_t}\pa{f_j\delta_{ik}-f_k\delta_{ij}}}\\ \nonumber &-\frac{1}{m-2}\sq{f_{ij}f_k-f_{ik}f_j+f_i\pa{u_kf_j-u_jf_k}}+\frac{\Delta f}{(m-1)(m-2)}\pa{f_k\delta_{ij}-f_j\delta_{ik}}.
\end{align}
\end{rem}
We have the following
\begin{proposition}\label{PR_CGRSDparticularcases}
  If the conformal gradient soliton is a conformal Einstein manifold (i.e. $f$ is constant) then $\left.D^{\pa{u, f}}\right|_{f=\text{const.}}\equiv 0$, while if the conformal gradient soliton is a soliton (i.e. $u=0$) then $\left.D^{\pa{u, f}}\right|_{u=0}=D^{\pa{0, f}}=D$.
\end{proposition}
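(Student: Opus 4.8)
The plan is to read both assertions directly off the defining formula \eqref{tensorD_u_f_bestVersion} for $D^{\pa{u, f}}$, rather than the alternative expression \eqref{tensorD_u_f}. This choice matters: \eqref{tensorD_u_f} contains terms that are quadratic in $\nabla f$ (such as $\abs{\nabla f}^2$ and the products $f_{ij}f_k$), which would obscure the first reduction, whereas \eqref{tensorD_u_f_bestVersion} is manifestly linear in the components of $\nabla f$ and splits cleanly into a ``curvature'' part (its first line) and a ``$u$-dependent'' part (its second and third lines).

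For the conformally Einstein case I set $f\equiv\text{const.}$, so that $f_i=0$ and $f_{ij}=0$ identically. A term-by-term inspection of \eqref{tensorD_u_f_bestVersion} shows that every summand carries exactly one factor among $f_k$, $f_j$, $f_t$, i.e. a component of $\nabla f$: this is visible in the three curvature terms on the first line, in the four $u$-dependent terms on the second line (note in particular the factor $f_t$ in $f_tu_{tk}$), and in the two terms on the third line (where $f_tu_t$ and the factor $f_k$ multiplying $\abs{\nabla u}^2$ appear). Hence each summand vanishes when $\nabla f\equiv 0$, and we conclude $\left.D^{\pa{u, f}}\right|_{f=\text{const.}}\equiv 0$.

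For the soliton case I set $u\equiv 0$, so that $u_i=0$ and consequently $u_{ij}=0$, $\Delta u=0$, $\abs{\nabla u}^2=0$ and $f_tu_t=0$. Every term on the second and third lines of \eqref{tensorD_u_f_bestVersion} carries at least one factor of $u_i$, $u_{ij}$, $\Delta u$ or $\abs{\nabla u}^2$, so all of these drop out and only the first line survives, namely
\begin{equation*}
D^{\pa{0, f}}_{ijk} = \frac{1}{m-2}\pa{f_kR_{ij}-f_jR_{ik}}+\frac{1}{(m-1)(m-2)}f_t\pa{R_{tk}\delta_{ij}-R_{tj}\delta_{ik}}-\frac{S}{(m-1)(m-2)}\pa{f_k\delta_{ij}-f_j \delta_{ik}}.
\end{equation*}
This coincides verbatim with the first of the four equivalent expressions for the Cao--Chen tensor $D$ recalled in Section \ref{SecDef}, whence $\left.D^{\pa{u, f}}\right|_{u=0}=D^{\pa{0, f}}=D$.

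There is no genuine obstacle here: both identities are immediate once the representation \eqref{tensorD_u_f_bestVersion} is used, precisely because its first line \emph{is} $D$ while all remaining terms are built from $\nabla u$ (and hence vanish when $u=0$) and always contain a factor of $\nabla f$ (and hence vanish when $f$ is constant). The only point deserving a moment's care is to compare the residual first line against the $R$-form of $D$ rather than against one of its $A$- or $\hess\pa{f}$-rewritings; this comparison is direct.
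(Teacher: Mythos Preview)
Your proof is correct and follows exactly the approach the paper itself calls ``straightforward'': reading both reductions directly off \eqref{tensorD_u_f_bestVersion}. The paper also sketches an alternative via \eqref{tensorD_u_f} (which, as you anticipated, then requires the soliton equation $R_{ij}+f_{ij}=\lambda\delta_{ij}$ and its trace to recover $D$ in the $u=0$ case), but your route is the primary one the authors endorse.
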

\begin{proof} The proof is straightforward using the definition of $D^{\pa{u, f}}$ given in \eqref{tensorD_u_f_bestVersion}. Using instead the definition \eqref{tensorD_u_f}, for the first condition we just observe that the right hand side of \eqref{tensorD_u_f} vanishes when $f$ is constant. If $u=0$ then equation \eqref{tensorD_u_f} becomes
  \begin{equation}
  D^{\pa{0, f}}_{ijk}=\frac{1}{(m-1)(m-2)}\sq{f_t\pa{f_{tj}\delta_{ik}-f_{tk}\delta_{ij}}}-\frac{1}{m-2}\pa{f_{ij}f_k-f_{ik}f_j}+\frac{\Delta f}{(m-1)(m-2)}\pa{f_k\delta_{ij}-f_j\delta_{ik}}.
\end{equation}
Now the conclusion follows using the solitons equation \eqref{GradRicSolEqComponents} and its traced version $S+\Delta f = \lambda m$.

\end{proof}
From the definition \eqref{tensorD_u_f_bestVersion} of $D^{\pa{u, f}}$ and from equation \eqref{DExpComp} we immediately deduce the following
\begin{lemma} If $\varrg$ is a conformal gradient Ricci soliton then
  \begin{equation}\label{CGRS_D_ufvsTildeD}
    D^{\pa{u, f}} = e^{3u}\tilde{D}.
  \end{equation}
\end{lemma}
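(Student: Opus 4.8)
The plan is to obtain \eqref{CGRS_D_ufvsTildeD} by a direct term-by-term comparison of the defining expression \eqref{tensorD_u_f_bestVersion} for $D^{\pa{u, f}}$ with the conformal transformation law \eqref{DExpComp} for the Cao--Chen tensor. The key observation is that \eqref{DExpComp} already rewrites the quantity $e^{3u}\tilde{D}_{ijk}$, whose constituent data ($\tilde{f}_k$, $\tilde{R}_{ij}$, $\tilde{S}$ and so on) live on the deformed metric $\tilde{g}$, entirely in terms of background quantities associated to $g$. Since $D^{\pa{u, f}}$ is defined directly through such background quantities, proving the identity reduces to matching the two right-hand sides.

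First I would check that the hypothesis of \eqref{DExpComp} is met. By definition of a conformal gradient Ricci soliton, the deformed manifold $\pa{M, \tilde{g}}$, with $\tilde{g}=e^{2u}g$, satisfies the gradient soliton equation \eqref{CGRS_global_tilde} for the potential $f$ and the constant $\lambda$; thus $\pa{M, \tilde{g}, f, \lambda}$ is a (gradient) soliton structure, which is precisely what is required for \eqref{DExpComp} to apply. In particular the relation $e^u\tilde{f}_t = f_t$ invoked in the derivation of \eqref{DExpComp} holds, so no further compatibility condition has to be verified.

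Next I would place the two expressions side by side and identify the terms line by line. The first line of \eqref{tensorD_u_f_bestVersion}, namely the three curvature contributions $\tfrac{1}{m-2}\pa{f_kR_{ij}-f_jR_{ik}}$, $\tfrac{1}{(m-1)(m-2)}f_t\pa{R_{tk}\delta_{ij}-R_{tj}\delta_{ik}}$ and $-\tfrac{S}{(m-1)(m-2)}\pa{f_k\delta_{ij}-f_j\delta_{ik}}$, coincides verbatim with the first line of \eqref{DExpComp}. The remaining terms of \eqref{DExpComp}, gathered in its last two lines, reproduce exactly the Hessian, gradient-product and $\abs{\nabla u}^2$ contributions of the second and third lines of \eqref{tensorD_u_f_bestVersion}, once one rewrites $f_ju_{ik}-f_ku_{ij}$ as $-\pa{f_ku_{ij}-f_ju_{ik}}$ and $\tfrac{1}{m-1}f_t\pa{u_{tj}\delta_{ik}-u_{tk}\delta_{ij}}$ as $-\tfrac{1}{m-1}\pa{f_tu_{tk}\delta_{ij}-f_tu_{tj}\delta_{ik}}$. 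Since every monomial then appears with the same coefficient on both sides, the two tensors agree, which is the assertion \eqref{CGRS_D_ufvsTildeD}.

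The argument carries no genuine obstacle: the content of the lemma is entirely contained in the already-established transformation law \eqref{DExpComp}, and the only care required is bookkeeping with the antisymmetry convention in the indices $j,k$ and with the placement of the normalizing factors $\tfrac{1}{m-1}$ and $\tfrac{1}{(m-1)(m-2)}$. The one mild subtlety worth flagging is purely notational, namely verifying that the sign rearrangements above are performed correctly so that no term is inadvertently double counted or dropped.
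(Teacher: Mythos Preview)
Your proposal is correct and follows exactly the approach of the paper, which states the lemma as an immediate consequence of the definition \eqref{tensorD_u_f_bestVersion} and of the transformation law \eqref{DExpComp}. Your explicit term-by-term matching merely spells out what the paper leaves implicit; there is no difference in method.
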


The first main result of this section is the following

\begin{theorem}\label{TH_FirstConditionCGRS}
If $\varrg$ is a conformal gradient Ricci soliton then
\begin{equation}\label{Eq_FirstCondition_CGRSCompNewD}
  C_{ijk}-\sq{(m-2)u_t-f_t}W_{tijk} = D^{\pa{u, f}}_{ijk}.
\end{equation}
%which is equivalent to
%\begin{align}\label{Eq_FirstCondition_CGRSComponents}
%&C_{ijk}-\sq{(m-2)u_t-f_t}W_{tijk} \\ \nonumber &+\frac{1}{m-2}\sq{f_{ij}f_k-f_{ik}f_j+f_i\pa{u_kf_j-u_jf_k}}\\ \nonumber &+\frac{1}{(m-1)(m-2)}\sq{f_t\pa{f_{tk}\delta_{ij}-f_{tj}\delta_{ik}}-\abs{\nabla f}^2\pa{u_k\delta_{ij}-u_j\delta_{ik}}+\pa{f_tu_t}\pa{f_k\delta_{ij}-f_j\delta_{ik}}}\\ \nonumber &-\frac{\Delta f}{(m-1)(m-2)}\pa{f_k\delta_{ij}-f_j\delta_{ik}}=0.
%\end{align}
\end{theorem}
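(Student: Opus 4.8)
The plan is to transplant the first Cao--Chen integrability condition \eqref{firstCaoChen} from the gradient Ricci soliton $\pa{M,\tilde g}$ back to the background metric $g$, using the conformal transformation laws collected in Section \ref{SecTrans}. Indeed, by the very definition of a conformal gradient Ricci soliton, $\pa{M,\tilde g,f,\lambda}$ satisfies \eqref{CGRS_global_tilde}, so Theorem \ref{caochenth} applies verbatim to it and gives
\[
\tilde C_{ijk} + \tilde f_t\,\tilde W_{tijk} = \tilde D_{ijk}.
\]
My strategy is then simply to multiply this identity through by $e^{3u}$ and rewrite each of the three terms in terms of quantities of the background metric $g$.

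First I would dispose of the two left-hand terms. For the Cotton term, \eqref{Cottonlexp} gives directly $e^{3u}\tilde C_{ijk} = C_{ijk}-(m-2)u_tW_{tijk}$. For the Weyl term I use that the $\pa{1,3}$-version of $W$ is a conformal invariant: by \eqref{Weylexp} the $\pa{0,4}$ components in the two orthonormal coframes are related by $\tilde W_{tijk}=e^{-2u}W_{tijk}$ (lowering the first index with $\delta$ in each respective frame), and combining this with $\tilde f_t=e^{-u}f_t$ (from $e^u\tilde f_t=f_t$) yields
\[
e^{3u}\,\tilde f_t\,\tilde W_{tijk} = e^{3u}\,e^{-u}f_t\,e^{-2u}W_{tijk} = f_tW_{tijk}.
\]
For the right-hand side I invoke the Lemma immediately preceding the theorem, namely \eqref{CGRS_D_ufvsTildeD}, which states $D^{\pa{u,f}}=e^{3u}\tilde D$ and which in turn rests on the long transformation law \eqref{DExpComp} together with the definition \eqref{tensorD_u_f_bestVersion}.

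Assembling the three pieces produces
\[
C_{ijk}-(m-2)u_tW_{tijk} + f_tW_{tijk} = D^{\pa{u,f}}_{ijk},
\]
and collecting the two Weyl contributions gives exactly \eqref{Eq_FirstCondition_CGRSCompNewD}. The manipulation itself is a one-line computation; the only point requiring care is the bookkeeping of the conformal weights, and it is worth noting that the factor $e^{3u}$ is the unique one rendering all three terms of weight zero (the Cotton tensor and $D$ both carry weight $e^{3u}$, while $\tilde f_t\tilde W_{tijk}$ carries the compensating weight $e^{-3u}$). Consequently I do not expect a genuine obstacle at the level of the theorem: the substantive work is entirely upstream, concentrated in deriving \eqref{Cottonlexp}, \eqref{Weylexp} and above all the tensor identity \eqref{CGRS_D_ufvsTildeD}, without which one would not recognize $e^{3u}\tilde D$ as the explicitly defined object $D^{\pa{u,f}}$. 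Granting those, the present statement follows as an immediate corollary, and I would regard verifying the weight balance and the $\pa{0,4}$ Weyl scaling as the only points meriting explicit attention.
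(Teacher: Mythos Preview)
Your argument is correct and coincides with the paper's \emph{Second proof}: apply \eqref{firstCaoChen} in the metric $\tilde g$, multiply by $e^{3u}$, and translate each term via \eqref{Cottonlexp}, \eqref{Weylexp}, $e^{u}\tilde f_t=f_t$, and \eqref{CGRS_D_ufvsTildeD}. The paper also supplies an alternative direct proof, differentiating \eqref{CGRS_comp_Schouten}, skew-symmetrizing, and simplifying, which avoids appealing to Theorem \ref{caochenth} and the transformation law \eqref{DExpComp}.
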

  \begin{rem}
    Equation \eqref{Eq_FirstCondition_CGRSCompNewD} is the first integrability condition for a conformal gradient Ricci soliton. Moreovoer, using Proposition \ref{PR_CGRSDparticularcases}, when $f$ is constant we recover equation \eqref{FirstCond_GN} of Gover and Nurowski, while when $u=0$ we recover equation \eqref{firstCaoChen} of Cao and Chen.
  \end{rem}
\begin{proof}
  There are two ways to prove \eqref{Eq_FirstCondition_CGRSCompNewD}.

\emph{First proof} (the direct one).

 We start from \eqref{CGRS_comp_Schouten}. Taking the covariant derivative and skew-symmetryzing with respect to the second and third index we get
 \begin{align}\label{CGRS_THP_Eq1}
   C_{ijk} &- (m-2)u_tR_{tijk} + f_tR_{tijk} +(m-2)\pa{u_{ik}u_j-u_{ij}u_k}+ f_{ij}u_k-f_{ik}u_j +u_{ij}f_k-u_{ik}f_j= \\ \nonumber &+\frac{m-2}{m}\set{\sq{\frac{S_k}{2\pa{m-1}}-u_{ttk}+\frac{f_{ttk}}{m-2}}\delta_{ij}-\sq{\frac{S_j}{2\pa{m-1}}-u_{ttj}+\frac{f_{ttj}}{m-2}}\delta_{ik}} \\ \nonumber &+\frac{m-2}{m}\set{2u_t\sq{\pa{u_{tk}-\frac{f_{tk}}{m-2}}\delta_{ij}-\pa{u_{tj}-\frac{f_{tj}}{m-2}}\delta_{ik}}-\frac{2}{m-2}f_t\pa{u_{tk}\delta_{ij}-u_{tj}\delta_{ik}}}.
 \end{align}
 Tracing equation \eqref{CGRS_THP_Eq1} with respect to $i$ and $j$  we deduce the following interesting relation, which will come in handy later:
 \begin{align}\label{CGRS_SkUttkFttk}
 \frac{S_k}{2(m-1)}-u_{ttk}+\frac{f_{ttk}}{m-2} &=\frac{m}{m-1}\pa{u_tR_{tk}-\frac{1}{m-2}f_tR_{tk}}-\pa{\frac{m-2}{m-1}}u_uu_{tk}+\frac{1}{m-1}\pa{u_tf_{tk}+f_tu_{tk}}\\ \nonumber &-\frac{m}{m-1}\Delta u\,u_k + \frac{m}{(m-1)(m-2)}\pa{u_k\Delta f+f_k\Delta u}.
 \end{align}
 Substituting equation \eqref{CGRS_SkUttkFttk} in \eqref{CGRS_THP_Eq1}, using the definition of the Weyl tensor (see equation \eqref{Riemann_Weyl}) and rearranging we arrive at
\begin{align}\label{CGRS_THP_Eq2}
C_{ijk}-\sq{(m-2)u_t-f_t}W_{tijk} &= \frac{1}{(m-1)(m-2)}\sq{(m-2)u_t-f_t}\pa{R_{tj}\delta_{ik}-R_{tk}\delta_{ij}}+ \pa{R_{ik}u_j-R_{ij}u_k} \\ \nonumber &+\frac{1}{m-2}\pa{R_{ij}f_k-R_{ik}f_j} + \frac{S}{(m-1)(m-2)}\sq{(m-2)\pa{u_k\delta_{ij}-u_j\delta_{ik}}-\pa{f_k\delta_{ij}-f_j\delta_{ik}}} \\ \nonumber &+u_k\sq{(m-2)u_{ij}-f_{ij}}-u_j\sq{(m-2)u_{ik}-f_{ik}}+\pa{f_ju_{ik}-f_ku_{ij}}\\ \nonumber &+\pa{\frac{m-2}{m-1}}u_t\pa{u_{tk}\delta_{ij}-u_{tj}\delta_{ik}}+\frac{1}{m-1}u_t\pa{f_{tj}\delta_{ik}-f_{tk}\delta_{ij}}\\ \nonumber &+\frac{1}{m-1}f_t\pa{u_{tj}\delta_{ik}-u_{tk}\delta_{ij}}+\frac{1}{m-1}\sq{(m-2)\Delta u - \Delta f}\pa{u_j\delta_{ik}-u_k\delta_{ij}} \\ \nonumber &+\frac{1}{m-1}\Delta u\pa{f_k\delta_{ij}-f_j\delta_{ik}}.
\end{align}
Now we use \eqref{CGRS_comp_Ricci}  every time the Hessian of $u$ appears in equation \eqref{CGRS_THP_Eq2}; rearranging and simplifying (with a lot of patience) we deduce
\eqref{Eq_FirstCondition_CGRSCompNewD}.
\begin{rem}
The same argument obviously works in the case of conformally Einstein manifolds, leading to equation \eqref{FirstCond_GN}.
\end{rem}

 \emph{Second proof} (sketch). Since $\varrg$ is a conformal gradient Ricci soliton we have the validity of \eqref{firstCaoChen} with respect to the metric $\tilde{g}$, i.e.
 \[
 \tilde{C}_{ijk} + \tilde{f_t}\tilde{W}_{tijk} = \tilde{D}_{ijk};
 \]
 multiplying both members through $e^{3u}$ we get
 \[
 e^{3u}\tilde{C}_{ijk} + \pa{e^{u}\tilde{f_t}}\pa{e^{2u}\tilde{W}_{tijk}} = e^{3u}\tilde{D}_{ijk}.
 \]
Now using \eqref{Weylexp}, \eqref{Cottonlexp}, \eqref{DExpComp} and the fact that $e^{u}\tilde{f}_t=f_t$ we obtain \eqref{Eq_FirstCondition_CGRSCompNewD}.
\end{proof}
\begin{rem}
The first proof of Theorem \ref{TH_FirstConditionCGRS} is long but elementary, using only the definition of the Cotton tensor and the equation defining a conformal Ricci soliton. The second proof  is obviously shorter, but requires a lot of preliminary work to deduce the necessary transformation laws.
\end{rem}
As far as the second integrability condition is concerned we have
\begin{theorem}\label{CGRS_TH_SecCond}
  If $\varrg$ is a conformal gradient Ricci soliton then
  \begin{equation}\label{Eq_SecondConditionBach}
    B_{ij}= \frac{1}{m-2}\set{D^{\pa{u, f}}_{ijk, k} -\pa{\frac{m-3}{m-2}}\sq{\pa{m-2}u_t-f_t}C_{jit}+\sq{f_tu_k+f_ku_t-(m-2)u_tu_k}W_{itjk}};
  \end{equation}
  Equivalently,
  \begin{align}\label{Eq_SecondConditionBach_equivalent}
  B_{ij}=\frac{1}{m-2}&\left\{\sq{(m-2)(m-4)u_tu_k-(m-4)\pa{u_kf_t+f_ku_t}+\pa{\frac{m-3}{m-2}}f_tf_k}W_{itjk}\right.\\\nonumber &\left.-\pa{\frac{m-3}{m-2}}\sq{\pa{m-2}u_t-f_t}D^{\pa{u, f}}_{jit}+D^{\pa{u, f}}_{ijt, t}\right\}.
  \end{align}
  \end{theorem}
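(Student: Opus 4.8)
The plan is to obtain \eqref{Eq_SecondConditionBach} directly from the first integrability condition \eqref{Eq_FirstCondition_CGRSCompNewD} by taking its divergence, in exact analogy with the way the second Gover--Nurowski condition \eqref{SecondCond_GN} is deduced from \eqref{FirstCond_GN}, and with the way \eqref{secondCaoChen} follows from \eqref{firstCaoChen}. First I would differentiate \eqref{Eq_FirstCondition_CGRSCompNewD} covariantly in $k$ and contract, obtaining
\[
C_{ijk,k}-\sq{(m-2)u_{tk}-f_{tk}}W_{tijk}-\sq{(m-2)u_t-f_t}W_{tijk,k}=D^{\pa{u,f}}_{ijk,k}.
\]
Into this I would insert two structural identities. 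From the (symmetric form of the) definition of the Bach tensor \eqref{def_Bach_comp}, $B_{ij}=\frac{1}{m-2}\pa{C_{ijk,k}+R_{kl}W_{ikjl}}$, I get $C_{ijk,k}=(m-2)B_{ij}-R_{kl}W_{ikjl}$. From \eqref{def_Cotton_comp_Weyl}, together with the pair symmetry and the skew-symmetries of $W$, I get the divergence on the \emph{last} index $W_{tijk,k}=-\frac{m-3}{m-2}C_{jit}$; this immediately produces the Cotton term $-\pa{\frac{m-3}{m-2}}\sq{(m-2)u_t-f_t}C_{jit}$ of \eqref{Eq_SecondConditionBach} with the correct sign.

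After these substitutions the proof reduces to showing that the remaining curvature terms collapse to the single Weyl expression appearing in \eqref{Eq_SecondConditionBach}, namely
\[
R_{kl}W_{ikjl}+\sq{(m-2)u_{tk}-f_{tk}}W_{tijk}=\sq{f_tu_k+f_ku_t-(m-2)u_tu_k}W_{itjk}.
\]
Here I would use the defining equation \eqref{CGRS_comp_Ricci} of the conformal gradient Ricci soliton to replace $(m-2)u_{tk}-f_{tk}$ by $R_{tk}+(m-2)u_tu_k-\pa{f_tu_k+f_ku_t}$ plus a pure-trace term proportional to $\delta_{tk}$. The trace term drops out because $W$ is totally trace-free, and the two Ricci--Weyl contractions cancel since $R_{tk}W_{tijk}=-R_{kl}W_{ikjl}$ (again by the skew-symmetry of $W$ in its first pair of indices); what survives, after the relabeling $W_{tijk}=-W_{itjk}$, is exactly the asserted Weyl term. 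This establishes \eqref{Eq_SecondConditionBach}.

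For the equivalent form \eqref{Eq_SecondConditionBach_equivalent} I would feed the first condition \eqref{Eq_FirstCondition_CGRSCompNewD} back into \eqref{Eq_SecondConditionBach}, writing $C_{jit}=D^{\pa{u,f}}_{jit}+\sq{(m-2)u_s-f_s}W_{sjit}$. This converts the Cotton term into the $D^{\pa{u,f}}_{jit}$ term of \eqref{Eq_SecondConditionBach_equivalent} together with a quadratic Weyl contribution; combining the latter with the Weyl term already present in \eqref{Eq_SecondConditionBach} and repeatedly using the symmetries of $W$ to bring every contraction into the form $W_{itjk}$, the coefficients of $u_tu_k$, of $u_kf_t+f_ku_t$ and of $f_tf_k$ assemble into $(m-2)(m-4)$, $-(m-4)$ and $\pa{\frac{m-3}{m-2}}$, respectively. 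I note in passing that an alternative (conformal) proof is available, starting from the Cao--Chen condition \eqref{secondCaoChen} written for $\tilde g$ and applying the transformation laws \eqref{BachExpComp}, \eqref{Cottonlexp}, \eqref{CGRS_D_ufvsTildeD} and $e^u\tilde f_t=f_t$; I would avoid it, however, since it requires the very long law \eqref{CovDerivDExpComp} for $\tilde D_{ijk,k}$. The main obstacle in the direct route is thus not conceptual but the careful bookkeeping of index positions in the Weyl contractions, in particular pinning down the sign of $W_{tijk,k}$ and verifying that the trace term and the two Ricci--Weyl contractions cancel cleanly.
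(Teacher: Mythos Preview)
Your proposal is correct and follows essentially the same route as the paper's first (direct) proof: differentiate \eqref{Eq_FirstCondition_CGRSCompNewD}, trace, substitute the Bach identity and the Weyl-divergence/Cotton relation, then use the soliton equation \eqref{CGRS_comp_Ricci} to collapse the Ricci/Hessian terms into the single Weyl expression, and finally feed \eqref{Eq_FirstCondition_CGRSCompNewD} back in to obtain \eqref{Eq_SecondConditionBach_equivalent}. Your remark about the alternative conformal argument (via \eqref{secondCaoChen} in the metric $\tilde g$ and the transformation laws) also mirrors the paper's ``second proof (sketch)''.
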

  \begin{rem}
     Equation \eqref{Eq_SecondConditionBach} is the second integrability condition for a conformal gradient Ricci soliton. Moreover, if $f$ is constant we recover equation \eqref{SecondCond_GN} of Gover and Nurowski, while if $u=0$ we recover equation \eqref{secondCaoChen} of Cao and Chen.
  \end{rem}
  \begin{proof}
    Again, there are two ways to prove \eqref{Eq_SecondConditionBach}.

     \emph{First proof} (the direct one).
We take the covariant derivative of equation \eqref{Eq_FirstCondition_CGRSCompNewD} to get
\begin{equation}\label{CGRS_SC_Eq1}
C_{ijk, l} -\sq{(m-2)u_{tl}-f_{tl}}W_{tijk}-\sq{(m-2)u_t-f_t}W_{tijk, l} = D^{\pa{u, f}}_{ijk, l};
\end{equation}
tracing with respect to $k$ and $l$ and using the definition of the Bach tensor and the fact that $W_{tijk, k} = W_{kjit, k} = -\pa{\frac{m-3}{m-2}}C_{jit}$ we deduce
\begin{equation}\label{CGRS_SC_Eq2}
(m-2)B_{ij}-\sq{R_{tk}-(m-2)u_{tk}+f_{tk}}W_{itjk} +\pa{\frac{m-3}{m-2}}\sq{(m-2)u_t-f_t}C_{jit}=D^{\pa{u, f}}_{ijk, k}.
\end{equation}
Now we note that, by equation \eqref{CGRS_comp_Ricci},
\[R_{tk}-(m-2)u_{tk}+f_{tk} = -(m-2)u_tu_k+f_tu_k+f_ku_t+\frac 1m\sq{S-(m-2)\pa{\Delta u-\abs{\nabla u}^{2}}+\Delta f-2g\pa{\nabla f, \nabla u}}\delta_{tk};
\]
substituting in \eqref{CGRS_SC_Eq2} and computing  we obtain \eqref{Eq_SecondConditionBach}. Equation \eqref{Eq_SecondConditionBach_equivalent} can be now obtained using \eqref{Eq_FirstCondition_CGRSCompNewD} in \eqref{Eq_SecondConditionBach} and rearranging.

\emph{Second proof} (sketch). Since $\varrg$ is a conformal gradient Ricci soliton we have the validity of \eqref{secondCaoChen} with respect to the metric $\tilde{g}$, i.e.
\[
(m-2)\tilde{B}_{ij} = \tilde{D}_{ijt, t} + \pa{\frac{m-3}{m-2}}\tilde{f}_t\tilde{C}_{jit};
\]
the thesis now follows from \eqref{BachExpComp}, \eqref{CovDerivDExpComp} (traced with respect to $k$ and $t$), \eqref{Cottonlexp} and a long computation.
  \end{proof}
\begin{rem} Following the second proof of Theorem \ref{CGRS_TH_SecCond} it is possibile to show that

    \begin{equation}
    D^{\pa{u, f}}_{ijt, t} = e^{2u}\tilde{D}_{ijt, t} - (m-4)u_tD^{\pa{u, f}}_{ijt} + u_tD^{\pa{u, f}}_{jit}.
    \end{equation}
\end{rem}

We observe that equation \eqref{CGRS_SkUttkFttk} gives a relation between $\nabla S$, $\nabla\Delta u$ and $\nabla \Delta f$ for a conformal gradient Ricci soliton. On the other hand, taking the covariant derivative of equation \eqref{Eq_CGRSGlobalTraced}, we deduce that
\begin{align}\label{CGRS_SkUttkFttk_second}
 \frac{S_k}{2(m-1)}-u_{ttk}+\frac{f_{ttk}}{2(m-1)} &=(m-2)u_tu_{tk}-\frac{m-2}{2(m-1)}f_tu_{tk}-\frac{m-2}{2(m-1)}u_tf_{tk}+\frac{S}{m-1}u_k -2\Delta u\, u_k\\ \nonumber &-(m-2)\abs{\nabla u}^2u_k+\frac{1}{m-1}\Delta f \,u_k+\pa{\frac{m-2}{m-1}}\pa{f_tu_t}u_k.
 \end{align}
 Subtracting \eqref{CGRS_SkUttkFttk_second} from \eqref{CGRS_SkUttkFttk} and rearranging we obtain
 \begin{align}\label{CGRS_Fttk_prelim}
 f_{ttk} &= 2(m-2)u_tR_{tk}-2f_tR_{tk}-2\pa{m-2}^2u_tu_{tk}+(m-2)u_tf_{tk}+(m-2)f_tu_{tk}+2\frac{(m-2)^2}{m}\Delta u\, u_k\\ \nonumber &-2\frac{(m-2)}{m}S u_k +2\frac{(m-1)(m-2)^2}{m}\abs{\nabla u}^2u_k +\frac{4}{m}\Delta f\,u_k +2\Delta u\,f_k-2\frac{(m-2)^2}{m}\pa{f_tu_t}u_k.
 \end{align}
Now using  equation \eqref{CGRS_comp_Ricci} to substitute every term containing the Hessian of $u$ and rearranging we deduce the following
\begin{proposition}\label{PR_CGRS_nablaDeltaF}
Let $\pa{M, g, f, \lambda}$ be a conformal gradient Ricci soliton; then we have
\begin{align}\label{CGRS_Fttk}
 f_{ttk} &= f_tf_{tk}-f_tR_{tk}-(m-2)u_tf_{tk} + \frac{(m-2)(2m-1)}{m}\abs{\nabla u}^2f_k+2\Delta f\,u_k +\pa{\frac{3m-2}{m}}\Delta u\,f_k \\ \nonumber &+(m-2)g\pa{\nabla f, \nabla u}u_k-\abs{\nabla f}^2u_k-\frac{\pa{S+\Delta f}}{m}f_k-\pa{\frac{m-2}{m}}g\pa{\nabla f, \nabla u}f_k.
\end{align}
\end{proposition}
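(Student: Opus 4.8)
The plan is to start from the preliminary identity \eqref{CGRS_Fttk_prelim}, already obtained by subtracting \eqref{CGRS_SkUttkFttk_second} from \eqref{CGRS_SkUttkFttk}, and to remove from it every occurrence of the Hessian of $u$ contracted against a gradient, using the structural equation \eqref{CGRS_comp_Ricci}. Inspecting the right-hand side of \eqref{CGRS_Fttk_prelim}, the only non-traced second derivatives of $u$ appearing there are $-2(m-2)^2 u_tu_{tk}$ and $(m-2)f_tu_{tk}$; by contrast the Laplacian terms $\Delta u\,u_k$ and $\Delta u\,f_k$ are to be retained, since they survive in the final formula \eqref{CGRS_Fttk}.

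To produce the needed substitutions, I would contract \eqref{CGRS_comp_Ricci} (with the free second index renamed $k$) once with $u_t$ and once with $f_t$, summing over $t$. Writing $\Phi := S-(m-2)\pa{\Delta u-\abs{\nabla u}^2}+\Delta f-2g\pa{\nabla f,\nabla u}$ for the scalar in brackets on the right-hand side of \eqref{CGRS_comp_Ricci}, these two contractions give
\[
(m-2)u_tu_{tk} = u_tR_{tk}+u_tf_{tk}+(m-2)\abs{\nabla u}^2 u_k-g\pa{\nabla f,\nabla u}u_k-\abs{\nabla u}^2 f_k-\frac{\Phi}{m}u_k,
\]
\[
(m-2)f_tu_{tk} = f_tR_{tk}+f_tf_{tk}+(m-2)g\pa{\nabla f,\nabla u}u_k-\abs{\nabla f}^2 u_k-g\pa{\nabla f,\nabla u}f_k-\frac{\Phi}{m}f_k.
\]
Substituting $-2(m-2)^2u_tu_{tk}=-2(m-2)\sq{(m-2)u_tu_{tk}}$ together with the second identity into \eqref{CGRS_Fttk_prelim}, I would then group the resulting terms by type. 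The four curvature contributions collapse to $-f_tR_{tk}$, the terms in $u_tf_{tk}$ combine to $-(m-2)u_tf_{tk}$, and a single $f_tf_{tk}$ remains, reproducing the first three terms of \eqref{CGRS_Fttk}.

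The delicate part is the bookkeeping of the scalar multiples of $u_k$ and $f_k$, where one must expand $\Phi$ explicitly. Among the coefficients of $u_k$ the contributions proportional to $S$, to $\Delta u$ and to $\abs{\nabla u}^2$ must cancel identically (this is exactly where the precise dimensional constants in \eqref{CGRS_Fttk_prelim} are essential), leaving only $2\Delta f$, $(m-2)g\pa{\nabla f,\nabla u}$ and $-\abs{\nabla f}^2$; among the coefficients of $f_k$ the various pieces assemble into $\frac{(m-2)(2m-1)}{m}\abs{\nabla u}^2$, $\pa{\frac{3m-2}{m}}\Delta u$, $-\frac{S+\Delta f}{m}$ and $-\pa{\frac{m-2}{m}}g\pa{\nabla f,\nabla u}$. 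I expect the main obstacle to be purely computational: verifying these exact cancellations and recombinations rather than any conceptual step. Once they are confirmed, rearranging the surviving terms yields \eqref{CGRS_Fttk}.
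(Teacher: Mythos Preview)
Your proposal is correct and follows exactly the approach indicated in the paper: start from \eqref{CGRS_Fttk_prelim}, eliminate the two non-traced Hessian terms of $u$ by contracting \eqref{CGRS_comp_Ricci} with $\nabla u$ and with $\nabla f$, and then collect. Your two contracted identities are accurate, and the cancellations you anticipate in the $u_k$- and $f_k$-coefficients (in particular the vanishing of the $S$, $\Delta u$ and $\abs{\nabla u}^2$ contributions in front of $u_k$) do occur precisely as you describe.
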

Inserting now \eqref{CGRS_Fttk} into \eqref{CGRS_SkUttkFttk} and rearranging we obtain the following, interesting expression for $\nabla\Delta u$.
\begin{theorem}\label{TH_CGRS_nablaDeltaU}
Let $\pa{M, g, f, \lambda}$ be a conformal gradient Ricci soliton; then we have
\begin{align}\label{CGRS_Uttk}
 u_{ttk} &= \frac{S_k}{2(m-1)}-u_tR_{tk}-u_tf_{tk}+\frac{1}{m-1}f_tf_{tk}+\pa{\frac{m-2}{m}}\abs{\nabla u}^2u_k+\pa{\frac{m-2}{m}}g\pa{\nabla f, \nabla u}u_k\\ \nonumber &-\frac{S}{m(m-1)}\pa{u_k+f_k}+\pa{\frac{m+2}{m}}\Delta u\,u_k-\frac{1}{m-1}\abs{\nabla f}^2u_k+\frac 1m\Delta f\,u_k + \frac{2(m-1)}{m}\abs{\nabla u}^2f_k \\ \nonumber &-\frac{m-2}{m(m-1)}g\pa{\nabla f, \nabla u}f_k+\frac{2}{m}\Delta u\,f_k-\frac{1}{m(m-1)}\Delta f\,f_k.
 \end{align}

\end{theorem}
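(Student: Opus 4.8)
The plan is to regard equations \eqref{CGRS_SkUttkFttk} and \eqref{CGRS_Fttk} as two scalar third-order relations and to solve them simultaneously for $u_{ttk}$. First I would isolate $u_{ttk}$ in \eqref{CGRS_SkUttkFttk}:
\[
u_{ttk} = \frac{S_k}{2(m-1)} + \frac{f_{ttk}}{m-2} - \frac{m}{m-1}u_tR_{tk} + \frac{m}{(m-1)(m-2)}f_tR_{tk} + \frac{m-2}{m-1}u_tu_{tk} - \frac{1}{m-1}\pa{u_tf_{tk}+f_tu_{tk}} + \frac{m}{m-1}\Delta u\, u_k - \frac{m}{(m-1)(m-2)}\pa{u_k\Delta f + f_k\Delta u},
\]
and then substitute for $\frac{f_{ttk}}{m-2}$ the expression obtained by dividing equation \eqref{CGRS_Fttk} of Proposition \ref{PR_CGRS_nablaDeltaF} by $m-2$.

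The only structural difficulty is that both \eqref{CGRS_SkUttkFttk} and the resulting right-hand side still carry the ``mixed Hessian'' contractions $u_tu_{tk}$ and $f_tu_{tk}$, which are absent from the claimed identity \eqref{CGRS_Uttk} (the contraction $u_tf_{tk}$, by contrast, does survive, with coefficient $-1$). To remove them I would contract the structural equation \eqref{CGRS_comp_Ricci}, written with free indices $t,k$, once against $\nabla u$ and once against $\nabla f$, that is multiply by $u_t$, respectively $f_t$, and sum over $t$. This yields two scalar identities expressing $(m-2)u_tu_{tk}$ and $(m-2)f_tu_{tk}$ in terms of $u_tR_{tk}$, $f_tR_{tk}$, $u_tf_{tk}$, $f_tf_{tk}$ together with multiples of $u_k$ and $f_k$ whose coefficients are built from $\abs{\nabla u}^2$, $g\pa{\nabla f, \nabla u}$, $\abs{\nabla f}^2$, $S$, $\Delta u$ and $\Delta f$. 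Substituting these back eliminates $u_tu_{tk}$ and $f_tu_{tk}$ completely and, as a bonus, it is precisely the $\nabla u$-contraction that restores the coefficient of $u_tf_{tk}$ to $-1$ (it arrives as $-1-\tfrac{1}{m-1}+\tfrac{1}{m-1}$) and collapses the coefficient of $f_tf_{tk}$ to $\tfrac{1}{m-1}$.

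After these substitutions the surviving tensorial objects are exactly $S_k$, $u_tR_{tk}$, $u_tf_{tk}$, $f_tf_{tk}$ and scalar multiples of $u_k$ and $f_k$, so the structure already matches \eqref{CGRS_Uttk}. As a reassuring internal consistency check one verifies that the coefficient of $u_tR_{tk}$ sums to $-1$ and that the coefficient of $f_tR_{tk}$ cancels identically, namely $\tfrac{m}{(m-1)(m-2)}-\tfrac{1}{m-2}-\tfrac{1}{(m-1)(m-2)}=0$, which is why no $f_tR_{tk}$ term appears. I expect the genuine obstacle to be purely computational: collecting the many scalar coefficients of $u_k$ and $f_k$ coming from $\tfrac{S_k}{2(m-1)}$, from the $\Delta u$, $\Delta f$, $\abs{\nabla u}^2$, $\abs{\nabla f}^2$, $g\pa{\nabla f, \nabla u}$ and $S$ terms of \eqref{CGRS_SkUttkFttk} and \eqref{CGRS_Fttk}, and from the two contraction identities, and then checking that they reduce to the exact rational functions of $m$ displayed in \eqref{CGRS_Uttk}. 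This lengthy but entirely mechanical bookkeeping is what the single word ``rearranging'' in the statement conceals.
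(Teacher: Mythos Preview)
Your proposal is correct and follows exactly the paper's approach. The paper's proof is the single sentence ``Inserting now \eqref{CGRS_Fttk} into \eqref{CGRS_SkUttkFttk} and rearranging'', and you have correctly identified and unpacked what that ``rearranging'' must conceal: a further use of the structural equation \eqref{CGRS_comp_Ricci}, contracted against $\nabla u$ and $\nabla f$, to eliminate the residual $u_t u_{tk}$ and $f_t u_{tk}$ terms (precisely the same trick used in passing from \eqref{CGRS_Fttk_prelim} to \eqref{CGRS_Fttk}).
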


%Using the previous results, equations \eqref{LaplacianscalarExp}, \eqref{NablascalarExp}, \eqref{diffComponentsExpTransf}, \eqref{scalarExp} and \eqref{Ricciexp}, the analogue of Proposition \ref{PR_CE_LaplacianScalarEq} for conformal gradient Ricci solitons becomes
%
%\begin{proposition}\label{PR_CGRS_LaplacianScalarEq}
%Let $\pa{M, g, f, \lambda}$ be a conformal gradient Ricci soliton; then
%\begin{align}\label{EQ_CGRS_LaplacianScalarEq}
%\frac12\sq{\Delta S-(m-2)g\pa{\nabla S, \nabla u}} &= (m-1)\Delta^2u+(m-1)(m-2)\abs{\hess(u)}^2+S\Delta u-2(m-1)\pa{\Delta u}^2\\ \nonumber &+\pa{\frac{m+2}{m}}\abs{\nabla u}^2\sq{S-2(m-1)\Delta u-(m-1)(m-2)\abs{\nabla u}^2}.
%\end{align}
%\end{proposition}

\section{Generic Ricci solitons: necessary conditions}\label{SecGenericRS}

In this section we construct, for a generic Ricci solitons$(M, g, X, \lambda)$, two integrability conditions which are a direct generalization of the ones in section \ref{sec5}, valid for a gradient Ricci solitons. To state them we first need to define the tensor $D^X$ as follows:
\begin{align}\label{DXtensor_components}
  D^X_{ijk} &= \frac{1}{m-2}\pa{X_kR_{ij}-X_jR_{ik}}+\frac{1}{(m-1)(m-2)}\pa{X_tR_{tk}\delta_{ij}-X_tR_{tj}\delta_{ik}}-\frac{S}{(m-1)(m-2)}\pa{X_k\delta_{ij}-X_j\delta_{ik}}\\ \nonumber &+\frac 12\pa{X_{kji}-X_{jki}}+\frac{1}{2(m-1)}\sq{\pa{X_{tkt}-X_{ktt}}\delta_{ij}-\pa{X_{tjt}-X_{jtt}}\delta_{ik}}.
\end{align}
\begin{rem}
  If $X=\nabla f$ for some $f\in \cinf$, then $D^{\nabla f} \equiv D$ (since $X_{kji}=f_{kji}=X_{jki}=f_{jki})$.
\end{rem}
The following theorem shows that $D^X$ is the natural counterpart of $D$ in the generic case:

\begin{theorem} \label{GenRS_TH_FirstCond}
If $\pa{M, g, X, \lambda}$ is a generic Ricci soliton with respect to the smooth vector field $X$, then the Cotton tensor, the Weyl tensor, the Bach tensor, $X$ and the tensor $D^X$ satisfy the conditions:
  \begin{align}
     &C_{ijk}+X_t W_{tijk} = D^X_{ijk}, \label{firstGenericRSIntCondition}\\     &B_{ij} = \frac{1}{m-2}\pa{D^X_{ijk, k}+\frac{m-3}{m-2}X_tC_{jit}+\frac 12\pa{X_{tk}-X_{kt}}W_{itjk}}. \label{secondGenericRSIntCondition}
  \end{align}
\end{theorem}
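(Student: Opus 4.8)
The plan is to establish \eqref{firstGenericRSIntCondition} by a direct computation modelled on the gradient case treated in Theorem \ref{caochenth}, and then to obtain \eqref{secondGenericRSIntCondition} by taking its divergence. For the first condition, I would start from the soliton equation in components \eqref{RicSolEqComponents}, take a covariant derivative in the direction $k$, and skew-symmetrize in the indices $j$ and $k$. This produces $R_{ij,k}-R_{ik,j}$ on the one side (which is tied to the Cotton tensor through \eqref{def_Cotton_comp}) and the combination $\frac12(X_{ijk}-X_{ikj})+\frac12(X_{jik}-X_{kij})$ on the other. The first of these is handled by the commutation rule $X_{ijk}-X_{ikj}=X_tR_{tijk}$ of Lemma \ref{CommutationsForVectorFields}, while the second is rewritten, again via Lemma \ref{CommutationsForVectorFields}, as $\frac12(X_{kji}-X_{jki})$, which is precisely the third-derivative block appearing in the definition \eqref{DXtensor_components} of $D^X$, plus a curvature remainder $\frac12 X_t(R_{tkij}-R_{tjik})$.

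Collecting the curvature terms and applying the first Bianchi identity \eqref{FirstBianchiRiem} collapses them to a single multiple of $X_tR_{tijk}$; decomposing this through the Riemann--Weyl identity \eqref{Riemann_Weyl} isolates the conformally invariant piece $X_tW_{tijk}$ and leaves behind Ricci- and scalar-curvature contractions of $X$. At this stage one reads off $C_{ijk}+X_tW_{tijk}$ on the left and, on the right, exactly the Ricci and scalar terms of $D^X$ together with residual expressions of the form $\frac{1}{m-2}X_tR_{tk}$ and $\frac{1}{2(m-1)}S_k$. To match the definition \eqref{DXtensor_components} one finally invokes the soliton identities \eqref{eq3} and \eqref{eq4}, which give $X_{ttk}=-S_k$ and $X_{ktt}=-X_tR_{tk}$; combined with the commutation $X_{tkt}-X_{ttk}=X_tR_{tk}$ these yield $X_{tkt}-X_{ktt}=2X_tR_{tk}-S_k$, converting the residual terms into the last block of $D^X$ and completing \eqref{firstGenericRSIntCondition}.

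For the second condition I would take the divergence (contract a covariant derivative in $k$) of \eqref{firstGenericRSIntCondition}, obtaining $C_{ijk,k}+X_{tk}W_{tijk}+X_tW_{tijk,k}=D^X_{ijk,k}$. Here $W_{tijk,k}=-\frac{m-3}{m-2}C_{jit}$ by \eqref{def_Cotton_comp_Weyl} together with the pair- and skew-symmetries of $W$, which produces, after rearranging, the term $+\frac{m-3}{m-2}X_tC_{jit}$. Substituting the Bach tensor through its definition \eqref{def_Bach_comp} in the form $C_{ijk,k}=(m-2)B_{ij}-R_{kl}W_{ikjl}$, and replacing $R_{kl}$ by $\lambda\delta_{kl}-\frac12(X_{kl}+X_{lk})$ via the soliton equation, the $\lambda$ contribution drops because $W$ is totally trace-free, so that $W_{ikjk}=0$. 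A short manipulation using the symmetries of $W$ (in particular $W_{tijk}=-W_{itjk}$) then combines $R_{kl}W_{ikjl}$ with $-X_{tk}W_{tijk}$ into the single term $\frac12(X_{tk}-X_{kt})W_{itjk}$, giving \eqref{secondGenericRSIntCondition}.

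I expect the first condition to be the main obstacle: the difficulty is not conceptual but the careful bookkeeping required to recognize, after the Bianchi and Weyl reductions, that the leftover non-$W$ terms reorganize exactly into the Ricci, scalar, third-derivative and commutator blocks of $D^X$. The last reorganization is the genuinely delicate point, since it relies essentially on the soliton identities and on the non-gradient commutator $X_{kji}-X_{jki}$, which is what distinguishes $D^X$ from the Cao--Chen tensor $D$ and vanishes when $X=\nabla f$. Once \eqref{firstGenericRSIntCondition} is in hand, the second condition is comparatively routine.
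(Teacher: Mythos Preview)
Your proposal is correct and follows essentially the same computational route as the paper: differentiate the soliton equation, skew-symmetrize, use the commutation rules of Lemma \ref{CommutationsForVectorFields} together with the first Bianchi identity and the Weyl decomposition \eqref{Riemann_Weyl}, and finish with the soliton identities \eqref{eq3}, \eqref{eq4}; then take the divergence and use \eqref{def_Cotton_comp_Weyl} and \eqref{def_Bach_comp} for the second condition. The only organizational difference is that the paper omits a direct proof here and instead deduces Theorem \ref{GenRS_TH_FirstCond} by specializing the conformal generic case (Theorems \ref{TH_FirstConditionCGenericRS} and \ref{TH_SecondConditionCGenericRS}) to $u=0$, but the ``first proof'' given there collapses, when $u=0$, to precisely your computation.
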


\begin{rem}
  If $X=\nabla f$ for some $f\in \cinf$, equations \eqref{firstGenericRSIntCondition} and \eqref{secondGenericRSIntCondition} become, respectively, \eqref{firstCaoChen} and \eqref{secondCaoChen}.
\end{rem}

\begin{rem} From \eqref{firstGenericRSIntCondition} we deduce
\begin{equation}
X_tC_{tij} = X_tD_{tij}.
\end{equation}
\end{rem}
We omit here the proof, since Theorem \ref{GenRS_TH_FirstCond} will be a consequence of Theorems \ref{TH_FirstConditionCGenericRS} and \ref{TH_SecondConditionCGenericRS} of the next section.

\section{Conformal generic Ricci solitons}\label{Sec_Conf_GenRS}
As a further step toward generalization, not unexpectedly, in this section we
define the notion of a conformal generic Ricci soliton.

\begin{defi}
  A Riemannian manifold $\varrg$ is said to be a \emph{conformal generic Ricci soliton} if there exist a conformal change of the metric  $\tilde{g}=e^{2u} g, \, u\in\cinf$, a smooth vector field $X \in \mathfrak{X}(M)$ and a constant $\lambda \in \erre$ such that $\pa{M, \tilde{g}}$ is a generic Ricci soliton, i.e.
  \begin{equation}\label{CGenericRS_global_tilde}
  \widetilde{\ricc} +\frac 12\mathcal{L}_X\tilde{g} = \lambda\tilde{g}.
  \end{equation}
\end{defi}

In terms of the geometry of the manifold $\varrg$,  \eqref{CGenericRS_global_tilde} leads to the following

\begin{lemma}
$\varrg$ is a conformal generic Ricci soliton if and only if there exist  $u\in\cinf$, a smooth vector field $X \in \mathfrak{X}(M)$ and a constant $\lambda \in \erre$ such that
\begin{align}\label{Eq_CGenericRSGlobal}
\ricc - (m-2)\hess\pa{u}+&(m-2)du\otimes du +\frac 12 e^{2u}\mathcal{L}_Xg = \frac{1}{m}\sq{S-(m-2)\pa{\Delta u-\abs{\nabla u}^{2}}+e^{2u}\diver X}g
\end{align}
and
\begin{align}\label{Eq_CGenericRSGlobalTraced}
S-2(m-1)\Delta u - (m-1)(m-2)\abs{\nabla u}^{2}+e^{2u}\pa{\diver X+m g\pa{X, \nabla u}} = m\lambda e^{2u}.
\end{align}

\end{lemma}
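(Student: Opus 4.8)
The plan is to follow verbatim the strategy already used for the conformal gradient Ricci soliton to derive \eqref{Eq_CGRSGlobal} and \eqref{Eq_CGRSGlobalTraced}, replacing the Hessian of $f$ by the term $\tfrac12\mathcal{L}_X\tilde g$ and feeding in the conformal transformation law for the Lie derivative recorded in Lemma \ref{LE_conformalchangeLieDeriv}. First I would rewrite the defining identity \eqref{CGenericRS_global_tilde} entirely in the background metric $g$: for the Ricci term I substitute \eqref{Ricciexp}, for the Lie-derivative term I use \eqref{eq_conformalchangeLieDeriv}, i.e. $\mathcal{L}_X\tilde g = e^{2u}\sq{\mathcal{L}_Xg + 2g\pa{X,\nabla u}g}$, and on the right-hand side I write $\tilde g = e^{2u}g$. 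This produces a single $\pa{0,2}$-tensor identity involving $\ricc$, $\hess(u)$, $du\otimes du$, $\mathcal{L}_Xg$, the scalars $\Delta u$, $\abs{\nabla u}^2$, $g\pa{X,\nabla u}$ and the constant $\lambda$.

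Next I would take the $g$-trace of this identity. Using $\operatorname{tr}(g)=m$, $\operatorname{tr}(\hess(u))=\Delta u$, $\operatorname{tr}\pa{du\otimes du}=\abs{\nabla u}^2$ and $\operatorname{tr}(\mathcal{L}_Xg)=2\diver X$, the $\Delta u$ and $\abs{\nabla u}^2$ contributions collect to $-2(m-1)\Delta u$ and $-(m-1)(m-2)\abs{\nabla u}^2$, while the two vector-field terms combine into $e^{2u}\pa{\diver X + m\,g\pa{X,\nabla u}}$; this is precisely \eqref{Eq_CGenericRSGlobalTraced}. Equivalently, one may trace \eqref{CGenericRS_global_tilde} directly to obtain $m\lambda = \tilde S + \tilde\diver X$ and then apply \eqref{scalarExp} and \eqref{divergenzatilde}.

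Then I would eliminate $\lambda$ by inserting \eqref{Eq_CGenericRSGlobalTraced} back into the full tensor identity. The two occurrences of $e^{2u}g\pa{X,\nabla u}g$ cancel, the coefficient of $\Delta u\,g$ collapses to $-\tfrac{m-2}{m}$ and that of $\abs{\nabla u}^2 g$ to $+\tfrac{m-2}{m}$, leaving exactly \eqref{Eq_CGenericRSGlobal}. For the converse implication I would run these substitutions in reverse: starting from \eqref{Eq_CGenericRSGlobal}, I rewrite its left-hand side as $\widetilde{\ricc}+\tfrac12\mathcal{L}_X\tilde g$ plus correction terms by means of \eqref{Ricciexp} and \eqref{eq_conformalchangeLieDeriv}, and then use \eqref{Eq_CGenericRSGlobalTraced} to recognise the scalar multiple of $g$ on the right-hand side as $\lambda e^{2u} = \lambda\tilde g$, which is \eqref{CGenericRS_global_tilde}.

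The whole argument is mechanical once the transformation laws of Section \ref{SecTrans} are available, so I do not expect a genuine conceptual obstacle. The only point that demands care is the bookkeeping of the $\Delta u$ and $\abs{\nabla u}^2$ coefficients and, above all, the clean cancellation of the $g\pa{X,\nabla u}g$ term against the trace contribution when passing between \eqref{CGenericRS_global_tilde} and the pair \eqref{Eq_CGenericRSGlobal}--\eqref{Eq_CGenericRSGlobalTraced}; this is exactly the same delicate cancellation already handled in the conformal gradient case treated earlier in this section.
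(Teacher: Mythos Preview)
Your proposal is correct and follows essentially the same approach as the paper: both trace \eqref{CGenericRS_global_tilde} to obtain $m\lambda=\tilde S+\tilde{\diver}X$ and apply the conformal transformation laws \eqref{scalarExp}, \eqref{divergenzatilde} (respectively \eqref{Ricciexp}/\eqref{RicciexpComponents} and \eqref{eq_conformalchangeLieDeriv}/\eqref{tildeXiktildeXki}) to translate the defining identity into the background metric, then use the traced equation to replace $\lambda$ and obtain \eqref{Eq_CGenericRSGlobal}. The only cosmetic difference is that the paper works in components while you phrase the same manipulations invariantly.
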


\begin{proof}
In an orthonormal frame \eqref{CGenericRS_global_tilde} becomes
 \begin{equation}\label{CGenericRS_comp_tilde}
   \tilde{R_{ij}}+\frac 12\pa{\tilde{X}_{ij}+\tilde{X}_{ji}}=\lambda \delta_{ij},
 \end{equation}
 while tracing \eqref{CGenericRS_global_tilde} we deduce that
  \begin{equation}\label{Eq_lambda_tildeS_tildeDeltaf_generic}
   m\lambda=\tilde{S} +\tilde{\diver}X.
 \end{equation}
 Multiplying both sides of \eqref{Eq_lambda_tildeS_tildeDeltaf_generic} by $e^{2u}$ and using \eqref{scalarExp} and \eqref{divergenzatilde} we get \eqref{Eq_CGenericRSGlobalTraced}; multiplying both sides of \eqref{CGenericRS_comp_tilde} by $e^{2u}$, using \eqref{RicciexpComponents}, \eqref{tildeXiktildeXki} and \eqref{Eq_CGenericRSGlobalTraced} we deduce
 \begin{equation}\label{CGenericRS_comp_Ricci}
R_{ij} -(m-2)u_{ij} + (m-2)u_iu_j +\frac 12 e^{2u}\pa{X_{ij}+X_{ji}} =\frac{1}{m}\sq{S-(m-2)\pa{\Delta u-\abs{\nabla u}^{2}}+e^{2u}\diver X}\delta_{ij},
\end{equation}
that is \eqref{Eq_CGenericRSGlobal}.
 \end{proof}

\begin{rem}
  If $u=0$ equations \eqref{Eq_CGenericRSGlobal} and \eqref{Eq_CGenericRSGlobalTraced} give
  \begin{equation}
    R_{ij} + \frac 12 \pa{X_{ij}+X_{ji}} = \frac 1m \pa{S+\diver X}\delta_{ij} = \lambda \delta_{ij},
  \end{equation}
  that is the equation of generic Ricci solitons;  if in addition $X=\nabla f$ for some $f\in \cinf$, we obviously recover the equation of gradient Ricci solitons. On the other hand, if $u \not \equiv 0$ but $X$ is the gradient of some function $f$ with the respect to the metric $\tilde g$, we recover equations \eqref{Eq_CGRSGlobal} and \eqref{Eq_CGRSGlobalTraced}. To prove this we observe that

  \[
  X = \tilde{\nabla}f = \tilde{f}_i\tilde{e}_i = \tilde{f}_ie^{-u}e_i = e^{-2u}f_ie_i,
  \]
  so we deduce
  \[
   X = \tilde{\nabla}f = e^{-2u}\nabla f.
  \]
  Moreover we have
  \begin{align*}
    X_i &= e^{-2u}f_i, \\ X_{ij} &= e^{-2u}\pa{f_{ij}-2f_iu_j}, \,\, X_{ji} = e^{-2u}\pa{f_{ij}-2f_ju_i}, \\ \diver X &= X_{ii} = e^{-2u}\pa{\Delta f -2g\pa{\nabla u, \nabla f}}.
  \end{align*}
  Substituting the previous relations in \eqref{CGenericRS_comp_Ricci} we get \eqref{CGRS_comp_Ricci}.
\end{rem}

Note that equation \eqref{CGenericRS_comp_Ricci} can be written, using the Schouten tensor, as
\begin{equation}\label{CGenericRS_comp_Schouten}
A_{ij} -(m-2)u_{ij} + (m-2)u_iu_j +\frac 12 e^{2u}\pa{X_{ij}+X_{ji}}=\frac{1}{m}\sq{\frac{m-2}{2(m-1)}S-(m-2)\pa{\Delta u-\abs{\nabla u}^{2}}+e^{2u}\diver X}\delta_{ij}.
\end{equation}

For a conformal generic Ricci soliton we now define the tensor $D^{\pa{u, X}}$ as follows:
\begin{align}\label{tensorD_u_X}
  D^{\pa{u, X}}_{ijk} =e^{2u}&\left\{\frac{1}{m-2}\pa{X_kR_{ij}-X_jR_{ik}}+\frac{1}{(m-1)(m-2)}\pa{X_tR_{tk}\delta_{ij}-X_tR_{tj}\delta_ik}-\frac{S}{(m-1)(m-2)}\pa{X_k\delta_{ij}-X_j\delta_{ik}}\right.\\ \nonumber &+\frac 12\pa{X_{kji}-X_{jki}}+\frac{1}{2(m-1)}\sq{\pa{X_{tkt}-X_{ktt}}\delta_{ij}-\pa{X_{tjt}-X_{jtt}}\delta_{ik}}-\frac 12\sq{\pa{X_{ij}+X_{ji}}u_k-\pa{X_{ik}+X_{ki}}u_j} \\ \nonumber &\left.-\frac{1}{2(m-1)}u_t\sq{\pa{X_{tk}+X_{kt}}\delta_{ij}-\pa{X_{tj}+X_{jt}}\delta_{ik}}+\frac{1}{m-1}\pa{\diver X} \pa{u_k\delta_{ij}-u_j\delta_{ik}}\right\}.
\end{align}

We have the following
\begin{proposition}
  If the conformal generic Ricci soliton is a conformal Einstein manifold (i.e. $X \equiv 0$) then $\left.D^{\pa{u, X}}\right|_{X \equiv 0}=D^{\pa{u, 0}}\equiv 0$, while if the conformal generic Ricci soliton is a generic Ricci soliton (i.e. $u=0$) then $\left.D^{\pa{u, X}}\right|_{u=0}=D^{\pa{0, X}}=D^X$.
\end{proposition}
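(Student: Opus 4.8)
The plan is to argue directly from the defining expression \eqref{tensorD_u_X}, treating the two degenerate regimes separately and in each case simply tracking which summands survive.

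First, for the conformally Einstein case I would set $X \equiv 0$. Inspecting \eqref{tensorD_u_X} one sees that every term inside the braces carries at least one factor originating from $X$: a component $X_k$ or $X_j$, a contraction $X_tR_{t\cdot}$, a first, second or third covariant derivative of $X$, the symmetrized Hessian $X_{ij}+X_{ji}$, or the divergence $\diver X = X_{tt}$. Hence setting $X \equiv 0$ annihilates the entire bracket irrespective of $u$, and since the conformal factor $e^{2u}$ never vanishes we conclude $D^{\pa{u, 0}}\equiv 0$. This is consistent with the fact that, when $X\equiv 0$, equation \eqref{Eq_CGenericRSGlobal} reduces to the conformally Einstein equation \eqref{CE_comp_Riccii}.

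Next, for the generic Ricci soliton case I would set $u=0$, so that $e^{2u}=1$ and every occurrence of $u_k, u_j, u_t$ vanishes. The terms on the last two lines of \eqref{tensorD_u_X}, namely those built from $u$ paired with $X_{ij}+X_{ji}$, the term $u_t\pa{X_{tk}+X_{kt}}$, and the term involving $\diver X$, all drop out, leaving exactly
\begin{align*}
D^{\pa{0, X}}_{ijk} &= \frac{1}{m-2}\pa{X_kR_{ij}-X_jR_{ik}}+\frac{1}{(m-1)(m-2)}\pa{X_tR_{tk}\delta_{ij}-X_tR_{tj}\delta_{ik}}\\
&\quad -\frac{S}{(m-1)(m-2)}\pa{X_k\delta_{ij}-X_j\delta_{ik}}+\frac 12\pa{X_{kji}-X_{jki}}\\
&\quad +\frac{1}{2(m-1)}\sq{\pa{X_{tkt}-X_{ktt}}\delta_{ij}-\pa{X_{tjt}-X_{jtt}}\delta_{ik}}.
\end{align*}
A term-by-term comparison with the definition \eqref{DXtensor_components} shows that this is precisely $D^X_{ijk}$, so $D^{\pa{0, X}}=D^X$.

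The argument is purely computational bookkeeping and there is no genuine obstacle: the only care needed is to correctly identify, in the somewhat dense formula \eqref{tensorD_u_X}, which summands are $u$-dependent (and hence vanish at $u=0$) and to check that the coefficients of the surviving $X$-terms match those in \eqref{DXtensor_components} exactly. Since both facts are immediate from the definitions, I expect the whole proof to occupy only a few lines.
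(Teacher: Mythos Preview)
Your proposal is correct and follows exactly the approach of the paper, which dismisses the proof as ``just a straightforward calculation''; you have simply spelled out that calculation. The only minor slip is the mention of a ``third covariant derivative'' of $X$---the terms $X_{kji}$, $X_{tkt}$ etc.\ are second covariant derivatives---but this is irrelevant to the argument.
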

\begin{proof}
  The proof is just a straightforward calculation.
\end{proof}
A computation similar to the one leading to equation \eqref{CGRS_D_ufvsTildeD} shows the validity of the following
\begin{lemma}
If $\varrg$ is a conformal generic Ricci soliton then
  \begin{equation}\label{CGeRS_EqDuXe3uDX}
    D^{\pa{u, X}} = e^{3u}\tilde{D}^X.
  \end{equation}
\end{lemma}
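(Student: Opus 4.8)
The plan is to mimic the proof of the gradient analogue \eqref{CGRS_D_ufvsTildeD}. Since $\tilde{D}^X$ is, by definition, nothing but the tensor $D^X$ of \eqref{DXtensor_components} computed with respect to the metric $\tilde{g}$, its components $\tilde{D}^X_{ijk}$ are obtained from the right hand side of \eqref{DXtensor_components} by decorating every geometric quantity with a tilde. First I would multiply through by $e^{3u}$ and then convert each tilded object into background quantities by means of the transformation laws collected in Section \ref{SecTrans}.

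Concretely, I would first handle the three ``algebraic'' terms (those not involving covariant derivatives of $X$) using the relation $\tilde{X}_i = e^u X_i$ (which follows from \eqref{XitildeXi}) together with \eqref{RicciexpComponents} for $e^{2u}\tilde{R}_{ij}$ and \eqref{scalarExp} for $e^{2u}\tilde{S}$. Since $e^{3u}\tilde{X}_k\tilde{R}_{ij} = e^{2u}X_k\pa{R_{ij}-(m-2)u_{ij}+\dots}$, this already accounts for the overall $e^{2u}$ factor appearing in the definition \eqref{tensorD_u_X} of $D^{\pa{u, X}}$ and produces, besides the genuine curvature contributions, a family of correction terms linear and quadratic in $\nabla u$. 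Next I would treat the two ``second order'' blocks $\frac12\pa{\tilde{X}_{kji}-\tilde{X}_{jki}}$ and the traced term $\frac{1}{2(m-1)}\sq{\pa{\tilde{X}_{tkt}-\tilde{X}_{ktt}}\delta_{ij}-\pa{\tilde{X}_{tjt}-\tilde{X}_{jtt}}\delta_{ik}}$ by invoking \eqref{secondCovDerivVFExp} in the form $\tilde{X}_{ijk}=e^{-u}\pa{X_{ijk}+\dots}$ and its trace, rewriting along the way the divergence term through \eqref{divergenzatilde} and the symmetric part $\tilde{X}_{ij}+\tilde{X}_{ji}$ through \eqref{tildeXiktildeXki}.

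The decisive step is then to collect all the conformal correction terms generated by these substitutions and to verify that, after the skew-symmetrization in $j, k$ built into $D^X$ and the relevant tracings, they reorganize exactly into the explicit $u$-dependent terms enclosed in the braces of \eqref{tensorD_u_X}. I expect the main obstacle to be precisely this bookkeeping: formula \eqref{secondCovDerivVFExp} is long, and its skew-symmetric combination $\tilde{X}_{kji}-\tilde{X}_{jki}$, together with its two traces, must be matched with care against the Hessian-of-$u$ terms introduced by \eqref{RicciexpComponents}, so that the spurious contributions containing $u_{ij}$, $X_{ij}+X_{ji}$ and $\diver X$ cancel or regroup into the prescribed form. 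As a consistency check one may specialize to $u=0$, where the identity must collapse to $D^{\pa{0, X}}=D^X$, and to $X=\tilde{\nabla} f$, where \eqref{CGeRS_EqDuXe3uDX} must reduce to the already established identity \eqref{CGRS_D_ufvsTildeD}.
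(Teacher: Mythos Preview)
Your proposal is correct and follows essentially the same approach as the paper: the paper merely states that the lemma is obtained by ``a computation similar to the one leading to equation \eqref{CGRS_D_ufvsTildeD}'', which in turn was deduced by comparing the definition of $D^{(u,f)}$ with the conformal transformation law \eqref{DExpComp} for $\tilde{D}$. Your plan of expanding $e^{3u}\tilde{D}^X_{ijk}$ via $\tilde{X}_i=e^uX_i$, \eqref{RicciexpComponents}, \eqref{scalarExp}, \eqref{secondCovDerivVFExp}, \eqref{tildeXiktildeXki} and \eqref{divergenzatilde}, and then matching the result against the definition \eqref{tensorD_u_X}, is precisely the generic-vector-field analogue of that computation.
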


We now come to the main result of this section, i.e. the first integrability condition for conformal generic Ricci solitons.

\begin{theorem}\label{TH_FirstConditionCGenericRS}
If $\varrg$ is a conformal generic Ricci soliton then
\begin{align}\label{Eq_FirstCondition_CGenericRSComponents}
C_{ijk}-\sq{(m-2)u_t-e^{2u}X_t}W_{tijk} = D^{(u, X)}_{ijk}.
\end{align}
\end{theorem}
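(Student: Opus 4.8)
The plan is to proceed by the \emph{direct} method used in the first proof of Theorem~\ref{TH_FirstConditionCGRS}, working entirely in the background metric $g$ and starting from the Schouten form \eqref{CGenericRS_comp_Schouten} of the structural equation. I point out that the ``indirect'' route is \emph{not} available here: it would consist in applying the first integrability condition \eqref{firstGenericRSIntCondition} to the soliton metric $\tilde g$ and then transforming back via \eqref{CGeRS_EqDuXe3uDX}, \eqref{Cottonlexp} and \eqref{Weylexp}, but \eqref{firstGenericRSIntCondition} is precisely the statement (Theorem~\ref{GenRS_TH_FirstCond}) that we intend to recover from the present theorem by setting $u=0$, so relying on it would be circular. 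Hence the computation must be carried out from scratch.

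First I would take the covariant derivative of \eqref{CGenericRS_comp_Schouten} in the index $k$ and skew-symmetrize in $j$ and $k$. By the definition \eqref{def_Cotton_comp} the Schouten part produces the Cotton tensor $C_{ijk}$; the term $-(m-2)u_{ij}$ gives $-(m-2)(u_{ijk}-u_{ikj})=-(m-2)u_tR_{tijk}$ via the commutation rule \eqref{ThirdDerivFunctionRiem}, while $(m-2)u_iu_j$ contributes $(m-2)(u_ju_{ik}-u_ku_{ij})$. The genuinely new feature is the weighted symmetrized Hessian $\tfrac12 e^{2u}(X_{ij}+X_{ji})$: differentiating it yields, besides the third-derivative terms $\tfrac12 e^{2u}(X_{ijk}+X_{jik})$, the extra pieces $e^{2u}u_k(X_{ij}+X_{ji})$ produced by the factor $e^{2u}$. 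After skew-symmetrizing, the vector-field commutation rule of Lemma~\ref{CommutationsForVectorFields}, $X_{ijk}-X_{ikj}=X_tR_{tijk}$, together with the first Bianchi identity (which collapses the combination $R_{tijk}+R_{tjik}-R_{tkij}$ to $2R_{tijk}$) extracts the clean curvature term $e^{2u}X_tR_{tijk}$, and leaves behind precisely the antisymmetric third-derivative combination $\tfrac12 e^{2u}(X_{kji}-X_{jki})$ appearing in the definition \eqref{DXtensor_components} of $D^X$ --- which is exactly why $D^{\pa{u, X}}$ in \eqref{tensorD_u_X} must carry that term.

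Next I would trace the resulting identity with respect to $i$ and $j$, obtaining the analogue of \eqref{CGRS_SkUttkFttk}: an expression for the combination $\tfrac{S_k}{2(m-1)}-u_{ttk}$ together with the traces $X_{tkt}-X_{ktt}$ of the third derivatives of $X$ and the lower-order terms involving $\diver X$ and $g\pa{X, \nabla u}$. Substituting this traced relation back into the skew-symmetrized identity removes the uncontracted ``$u_{ttk}$-type'' terms. I would then decompose every Riemann term $R_{tijk}$ through \eqref{Riemann_Weyl}, so that the two curvature contributions reassemble into the single Weyl block $-\sq{(m-2)u_t-e^{2u}X_t}W_{tijk}$ on the left-hand side, while the accompanying Ricci and scalar-curvature pieces join the lower-order terms destined for $D^{\pa{u, X}}$.

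Finally, I would invoke the Ricci equation \eqref{CGenericRS_comp_Ricci} to eliminate every surviving Hessian of $u$ in favour of Ricci, scalar-curvature and first-order data, and then rearrange. The main obstacle, as in the gradient case but now compounded, is the bookkeeping forced by the two features absent from the conformally Einstein setting: the factor $e^{2u}$ in front of the Lie-derivative term, which scatters $u_k$-weighted corrections through every step, and the non-gradient nature of $X$, which renders the antisymmetric third derivatives $X_{kji}-X_{jki}$ and their traces genuinely nonzero. Matching all of these, with their correct signs and the overall $e^{2u}$ factor, against the rather intricate definition \eqref{tensorD_u_X} of $D^{\pa{u, X}}$ is where the bulk of the patient computation lies; once the Weyl terms are isolated on the left, a term-by-term comparison yields \eqref{Eq_FirstCondition_CGenericRSComponents}.
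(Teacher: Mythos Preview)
Your proposal is correct and follows essentially the paper's first (direct) proof: differentiate the Schouten form \eqref{CGenericRS_comp_Schouten}, skew-symmetrize, trace with respect to $i,j$ and resubstitute, decompose the Riemann terms via \eqref{Riemann_Weyl}, and eliminate $\hess(u)$ through \eqref{CGenericRS_comp_Ricci}. Your remark on circularity is apt---the paper does sketch the indirect route as a ``second proof'', but since Theorem~\ref{GenRS_TH_FirstCond} is explicitly derived \emph{from} the present theorem, only the direct argument is self-contained here.
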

\begin{rem}
  If $u=0$, \eqref{Eq_FirstCondition_CGenericRSComponents} becomes equation \eqref{firstGenericRSIntCondition}; if $X=0$, we recover equation \eqref{FirstCond_GN}; if $X=\tilde{\nabla}f$ for some $f \in \cinf$, we have equation \eqref{Eq_FirstCondition_CGRSCompNewD}.
\end{rem}

\begin{proof}
  As in the case of Theorem  \ref{TH_FirstConditionCGRS}, there are two equivalent ways to prove \eqref{Eq_FirstCondition_CGenericRSComponents}.

 \emph{First proof} (the direct one).

 We start from \eqref{CGenericRS_comp_Schouten}. Taking the covariant derivative and skew-symmetryzing with respect to the second and third index we get
 \begin{align}\label{CGers_FirstCond_Eq1}
   &C_{ijk} - (m-2)u_tR_{tijk} +(m-2)\pa{u_{ik}u_j-u_{ij}u_k} \\ \nonumber &+ e^{2u}\sq{\pa{X_{ij}+X_{ji}}u_k-\pa{X_{ik}+X_{ki}}u_j}+\frac 12e^{2u}X_tR_{tijk}+\frac 12e^{2u}\pa{X_{jik}-X_{kji}} \\ \nonumber &-\frac{1}{m}\left\{\frac{(m-2)}{2\pa{m-1}}\pa{S_k\delta_{ij}-S_j\delta_{ik}}-(m-2)\pa{u_{ttk}\delta_{ij}-u_{ttj}\delta_{ik}}+e^{2u}\pa{X_{ttk}\delta_{ij}-X_{ttj}\delta_{ik}}\right.\\ \nonumber &\left.\quad+2(m-2)\pa{u_tu_{tk}\delta_{ij}-u_tu_{tj}\delta_{ik}}+2 e^{2u} (\diver X) \pa{u_k\delta_{ij}-u_j\delta_{ik}}\right\}=0.
 \end{align}
 Note that, using the first Bianchi identity \eqref{FirstBianchiRiem} and Lemma \ref{CommutationsForVectorFields}, we have
 \[
 X_{jik}-X_{kji}=X_{jki}-X_{kji}+X_tR_{tijk},
 \]
 so that
 \begin{equation}\label{CGeRS_commutXjik}
 \frac 12e^{2u}X_tR_{tijk}+\frac 12e^{2u}\pa{X_{jik}-X_{kji}} = e^{2u}X_tR_{tijk}+\frac 12 e^{2u}\pa{X_{jki}-X_{kji}}.
 \end{equation}
Tracing equation \eqref{CGers_FirstCond_Eq1} with respect to $i$ and $j$ we deduce the following interesting relation (compare it with equation \eqref{CGRS_SkUttkFttk}):
\begin{align}\label{CGeRS_SkUttkXttk}
 \frac{(m-2)}{2(m-1)}S_k-(m-2)u_{ttk}+e^{2u}X_{ttk} &=\frac{m}{m-1}\sq{(m-2)u_t-e^{2u}X_t}R_{tk}-\frac{(m-2)^2}{m-1}u_uu_{tk}+\frac{2}{m-1}e^{2u}\pa{\diver X}\,u_k\\ \nonumber &-\frac{m(m-2)}{m-1}\Delta u\,u_k -\frac{m}{m-1}e^{2u}u_t\pa{X_{tk}+X_{kt}}+\frac{m}{2(m-1)}e^{2u}\pa{X_{tkt}-X_{ktt}}.
 \end{align}
 Now we insert \eqref{CGeRS_SkUttkXttk}, \eqref{CGeRS_commutXjik} and \eqref{Riemann_Weyl} into \eqref{CGers_FirstCond_Eq1}; after some manipulation we arrive at
 \begin{align}\label{CGers_FirstCond_Eq2}
 C_{ijk}-\sq{(m-2)u_t-e^{2u}X_t}W_{tijk} &= \frac{1}{(m-1)(m-2)}\sq{(m-2)u_t-e^{2u}X_t}\pa{R_{tj}\delta_{ik}-R_{tk}\delta_{ij}}+ \\ \nonumber &+\frac{1}{m-2}R_{ik}\sq{(m-2)u_j-e^{2u}X_j}
 -\frac{1}{m-2}R_{ij}\sq{(m-2)u_k-e^{2u}X_k} \\ \nonumber &+\frac{S}{(m-1)(m-2)}\set{\sq{(m-2)u_k-e^{2u}X_k}\delta_{ij}-\sq{(m-2)u_j-e^{2u}X_j}\delta_{ik}} \\ \nonumber &+(m-2)\pa{u_{ij}u_k-u_{ik}u_j}+e^{2u}\sq{\pa{X_{ik}+X_{ki}}u_j-\pa{X_{ij}+X_{ji}}u_k} \\ \nonumber &+\frac 12 e^{2u}\pa{X_{kji}-X_{jki}}+\pa{\frac{m-2}{m-1}}u_t\pa{u_{tk}\delta_{ij}-u_{tj}\delta_{ik}}\\ \nonumber &+\frac{2}{m-1}e^{2u}\pa{\diver X}\pa{u_k\delta_{ij}-u_j\delta_{ik}}-\pa{\frac{m-2}{m-1}}\Delta u\pa{u_k\delta_{ij}-u_j\delta_{ik}}\\ \nonumber &-\frac{1}{m-1}e^{2u}u_t\sq{\pa{X_{tk}+X_{kt}}\delta_{ij}-\pa{X_{tj}+X_{jt}}\delta_{ik}} \\ \nonumber &+\frac{1}{2(m-1)}e^{2u}\sq{\pa{X_{tkt}-X_{ktt}}\delta_{ij}-\pa{X_{tjt}-X_{jtt}}\delta_{ik}}.
 \end{align}
Using \eqref{CGenericRS_comp_Ricci} every time the Hessian of $u$ appears in equation \eqref{CGers_FirstCond_Eq2}, rearranging and simplifying (with a lot of patience, again) we deduce \eqref{Eq_FirstCondition_CGenericRSComponents}.

 \emph{Second proof (sketch)}. Since $\varrg$ is a conformal generic Ricci soliton we have the validity of \eqref{firstGenericRSIntCondition} with respect to the metric $\tilde{g}$, i.e.
 \[
 \tilde{C}_{ijk} + \tilde{X_t}\tilde{W}_{tijk} = \tilde{D}^X_{ijk}.
 \]
Now one should multiply both members through $e^{3u}$, use \eqref{Weylexp}, \eqref{Cottonlexp}, the fact that $\tilde{X}_t=e^{u}X_t$ and the computation producing equation  \eqref{CGeRS_EqDuXe3uDX}.
\end{proof}

As far as the second integrability condition is concerned we have

\begin{theorem}\label{TH_SecondConditionCGenericRS}
  If $\varrg$ is a conformal generic Ricci soliton then
  \begin{align}\label{Eq_SecondConditionBach_GENERIC}
  B_{ij}= \frac{1}{m-2}\set{D^{\pa{u, X}}_{ijk, k}-\pa{\frac{m-3}{m-2}}\sq{(m-2)u_t-e^{2u}X_t}C_{jit}+\sq{\frac 12e^{2u}\pa{X_{tk}-X_{kt}}+2e^{2u}\pa{X_tu_k}-(m-2)u_tu_k}W_{itjk}}.
  \end{align}
  \end{theorem}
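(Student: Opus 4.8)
The plan is to follow the direct proof of Theorem \ref{CGRS_TH_SecCond}, obtaining the second integrability condition as the \emph{traced divergence} of the first. The starting point is \eqref{Eq_FirstCondition_CGenericRSComponents}. First I would take its covariant derivative in a fresh index $l$, keeping in mind that the combination $e^{2u}X_t$ is \emph{not} a single covariant tensor: Leibniz gives $\pa{e^{2u}X_t}_{,l}=e^{2u}\pa{X_{tl}+2u_lX_t}$, and the extra term $2u_le^{2u}X_t$ is exactly what produces the coefficient $2e^{2u}X_tu_k$ in the final formula. Then I would trace with respect to $k$ and $l$, which on the left side yields the three contractions $C_{ijk,k}$, the Weyl divergence $W_{tijk,k}$, and a Weyl term carrying $(m-2)u_{tk}-e^{2u}X_{tk}-2u_ke^{2u}X_t$.

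For the first contraction I would invoke the definition \eqref{def_Bach_comp} of the Bach tensor in the form $C_{ijk,k}=(m-2)B_{ij}-R_{kl}W_{ikjl}$; for the second I would use the Weyl--Cotton identity \eqref{def_Cotton_comp_Weyl} together with the symmetries of $W$ to get $W_{tijk,k}=-\pa{\frac{m-3}{m-2}}C_{jit}$, precisely as in the gradient case. Relabelling through $W_{tijk}=-W_{itjk}$ and $R_{kl}W_{ikjl}=R_{tk}W_{itjk}$, the traced identity collapses to
\begin{equation*}
(m-2)B_{ij}=D^{\pa{u,X}}_{ijk,k}+\sq{R_{tk}-(m-2)u_{tk}+e^{2u}X_{tk}+2e^{2u}u_kX_t}W_{itjk}-\pa{\frac{m-3}{m-2}}\sq{(m-2)u_t-e^{2u}X_t}C_{jit}.
\end{equation*}

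The decisive step is then to eliminate the combination $R_{tk}-(m-2)u_{tk}$ by means of the structural equation \eqref{CGenericRS_comp_Ricci}, which gives $R_{tk}-(m-2)u_{tk}=-(m-2)u_tu_k-\frac{1}{2}e^{2u}\pa{X_{tk}+X_{kt}}+\frac{1}{m}\sq{\cdots}\delta_{tk}$. Here I would exploit that the Weyl tensor is totally trace-free, so that the scalar term proportional to $\delta_{tk}$ is annihilated upon contraction with $W_{itjk}$ (because $W_{itjt}=0$); this is what makes the otherwise unwieldy conformal scalar factor disappear. Collecting the surviving coefficient of $W_{itjk}$ yields $-(m-2)u_tu_k+\frac{1}{2}e^{2u}\pa{X_{tk}-X_{kt}}+2e^{2u}u_kX_t$, and dividing through by $m-2$ reproduces \eqref{Eq_SecondConditionBach_GENERIC}.

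I expect the main obstacle to be organizational rather than conceptual: keeping consistent track of the Weyl symmetries in the divergence identity (the sign and index pattern of $W_{tijk,k}=-\frac{m-3}{m-2}C_{jit}$), correctly differentiating the non-tensorial product $e^{2u}X_t$, and resisting the temptation to expand the $\delta_{tk}$ term before noticing it vanishes by trace-freeness. As an independent check --- and an alternative route --- one can instead start from the generic-soliton condition \eqref{secondGenericRSIntCondition} written for the metric $\tilde g$, multiply by $e^{4u}$, and substitute \eqref{BachExpComp}, \eqref{Cottonlexp}, the relation \eqref{CGeRS_EqDuXe3uDX} (differentiated and traced), and $\tilde X_t=e^uX_t$; this is shorter to state but requires the transformation law for the divergence of $D^X$, so I would present the direct computation as the primary argument.
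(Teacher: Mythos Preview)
Your proposal is correct and follows essentially the same approach as the paper's own proof: differentiate \eqref{Eq_FirstCondition_CGenericRSComponents}, trace, invoke \eqref{def_Bach_comp} and \eqref{def_Cotton_comp_Weyl}, and then substitute \eqref{CGenericRS_comp_Ricci}, using the trace-freeness of $W$ to kill the $\delta_{tk}$ term. Your handling of the Leibniz term $\pa{e^{2u}X_t}_{,l}=e^{2u}\pa{X_{tl}+2u_lX_t}$ and the resulting intermediate identity match the paper line for line.
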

\begin{rem}
  If $u=0$, \eqref{Eq_SecondConditionBach_GENERIC} becomes equation \eqref{secondGenericRSIntCondition}; if $X=0$, we recover equation \eqref{SecondCond_GN}; if $X=\tilde{\nabla}f$ for some $f \in \cinf$, we have equation \eqref{Eq_SecondConditionBach}.
\end{rem}
\begin{proof}
  Taking the covariant derivative of equation \eqref{Eq_FirstCondition_CGenericRSComponents} we get
  \[
  C_{ijk, l} -\sq{(m-2)u_{tl}-2e^{2u}X_tu_l-e^{2u}X_{tl}}W_{tijk}-\sq{(m-2)u_t-e^{2u}X_t}W_{tijk, l} = D^{\pa{u, X}}_{ijk, l}.
  \]
  Now we trace with respect to $k$ and $l$ and we use the definition \eqref{def_Bach_comp} of the Bach tensor to deduce
  \[
  (m-2)B_{ij}-R_{tk}W_{itjk}+\sq{(m-2)u_{tk}-2e^{2u}X_tu_k-e^{2u}X_{tk}}W_{itjk}-\sq{(m-2)u_t-e^{2u}X_t}W_{tijk, k} = D^{\pa{u, X}}_{ijk, k}.
  \]
  Inserting \eqref{def_Cotton_comp_Weyl} and \eqref{CGenericRS_comp_Ricci} in the previous relation, simplifying and rearranging we get \eqref{Eq_SecondConditionBach_GENERIC}.
\end{proof}

%  \begin{lemma}
%    \[
%    D^{\pa{u, f}}_{ijt, t} = e^{2u}\tilde{D}_{ijt, t} - (m-4)u_tD^{\pa{u, f}}_{ijt} + u_tD^{\pa{u, f}}_{jit}.
%    \]
%  \end{lemma}

\vspace{3cm}

\section{Higher order integrability condition for gradient Ricci solitons}\label{sec_higherorder}
In this short section we present the third and the fourth integrability conditions for gradient Ricci solitons of dimension $m\geq 4$. Starting from equation \eqref{secondCaoChen} in Theorem \ref{caochenth} we get the following
\begin{theorem}\label{TH_thirdIntCond}
  If $\pa{M, g, f}$ is a gradient Ricci soliton with potential function $f$, then the Cotton tensor,  the Bach tensor and the tensor $D$ satisfy the condition
  \begin{equation}\label{thirdCond1}
  R_{kt}C_{kti}= (m-2)D_{itk, tk},
\end{equation}
or, equivalently,
\begin{equation}\label{thirdCond2}
  \pa{\diver B}_i = B_{ik, k} = \pa{\frac{m-4}{m-2}}D_{itk, tk}.
\end{equation}
\end{theorem}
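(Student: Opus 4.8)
The plan is to obtain \eqref{thirdCond1} by taking one further divergence of the second Cao--Chen integrability condition \eqref{secondCaoChen} and matching the outcome against the general divergence formula for the Bach tensor \eqref{diverBach}. Writing \eqref{secondCaoChen} in the form $(m-2)B_{ij} = D_{ijk,k} + \frac{m-3}{m-2}f_t C_{jit}$ and differentiating with respect to $j$ (summing over $j$) gives
\begin{equation*}
(m-2)B_{ij,j} = D_{ijk,kj} + \frac{m-3}{m-2}\pa{f_{tj}C_{jit} + f_t C_{jit,j}}.
\end{equation*}
For the left-hand side I would invoke \eqref{diverBach}, so that $(m-2)B_{ij,j} = \frac{m-4}{m-2}R_{kt}C_{kti}$.

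The next step is to reduce the two terms coming from $f_t C_{jit}$ to the single contraction $R_{kt}C_{kti}$. The soliton equation \eqref{eq1g} gives $f_{tj} = \lambda\delta_{tj} - R_{tj}$; since the Cotton tensor is totally trace-free the $\lambda$-term drops, and using the skew-symmetry $C_{jit} = -C_{jti}$ together with the symmetry of $\ricc$ one finds $f_{tj}C_{jit} = R_{kt}C_{kti}$. The remaining term is controlled by the vanishing of the divergence of the Cotton tensor on its first index, $C_{jit,j}=0$, which is exactly \eqref{NullDiverCotton}, so $f_t C_{jit,j} = 0$. Substituting these back leaves an identity involving only $R_{kt}C_{kti}$ and the double divergence of $D$:
\begin{equation*}
\frac{m-4}{m-2}R_{kt}C_{kti} = D_{ijk,kj} + \frac{m-3}{m-2}R_{kt}C_{kti}, \qquad \text{hence} \qquad D_{ijk,kj} = -\frac{1}{m-2}R_{kt}C_{kti}.
\end{equation*}

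Finally I would reconcile the index ordering of the double divergence. Relabelling the summed indices $j \leftrightarrow k$ and using the skew-symmetry $D_{ijk} = -D_{ikj}$ shows that $D_{ijk,kj} = -D_{itk,tk}$; I stress that this is a genuine relabelling of dummy indices combined with an antisymmetry, so no curvature commutator term is produced. The previous display then becomes precisely \eqref{thirdCond1}, namely $R_{kt}C_{kti} = (m-2)D_{itk,tk}$. The equivalent form \eqref{thirdCond2} follows immediately by feeding \eqref{thirdCond1} back into \eqref{diverBach}, which yields $B_{ik,k} = \frac{m-4}{(m-2)^2}R_{kt}C_{kti} = \pa{\frac{m-4}{m-2}}D_{itk,tk}$.

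I expect the only delicate points to be purely bookkeeping: keeping track of the sign changes in the Cotton contractions (which rely on $C$ being trace-free and skew in its last two indices and on \eqref{NullDiverCotton}), and recognizing that the two natural orderings of the double divergence of $D$ coincide up to a sign by the antisymmetry of $D$. No essentially new computation beyond \eqref{secondCaoChen}, \eqref{diverBach}, and the soliton identities is required, which is why this higher-order condition can be stated so compactly.
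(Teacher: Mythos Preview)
Your proof is correct and follows essentially the same route as the paper: take the divergence of \eqref{secondCaoChen}, use the soliton equation together with the trace-free property of $C$ and \eqref{NullDiverCotton} to reduce the Cotton terms, and compare with \eqref{diverBach} to isolate $R_{kt}C_{kti}$. The only cosmetic difference is that the paper first takes a general covariant derivative and then traces, whereas you contract immediately; your explicit remark that $D_{ijk,kj}=-D_{itk,tk}$ by relabelling and antisymmetry (with no commutator needed) is a point the paper leaves implicit.
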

\begin{proof}
  We take the covariant derivative of equation \eqref{secondCaoChen}, obtaining
  \[
  (m-2)B_{ij, k} = D_{ijt, tk} + \pa{\frac{m-3}{m-2}}\pa{f_{tk}C_{jit}+f_tC_{jit, k}},
  \]
  which implies, using the soliton equation,
  \[
    (m-2)B_{ij, k} = D_{ijt, tk} + \pa{\frac{m-3}{m-2}}\pa{\lambda C_{jik}+ R_{tk}C_{jti} + f_tC_{jit, k}}.
  \]
  Tracing with respect to $j$ and $k$, using equation \eqref{NullDiverCotton} and the fact that the Cotton tensor is totally trace-free we get
  \[
   (m-2)B_{ik, k} = D_{ikt, tk} + \pa{\frac{m-3}{m-2}}R_{tk}C_{jti}.
  \]
  Now we exploit \eqref{diverBach} in the previous relation, obtaining \eqref{thirdCond1}. To get \eqref{thirdCond2} we simply insert again \eqref{diverBach} into \eqref{thirdCond1}.
\end{proof}
\begin{theorem}\label{TH_fourthIntCond}
  If $\pa{M, g, f}$ is a gradient Ricci soliton with potential function $f$, then the Cotton tensor,  the Bach tensor and the tensor $D$ satisfy the condition
  \begin{equation}\label{fourthCond1}
  \frac{1}{2}\abs{C}^2+\pa{m-2}R_{ij}B_{ij}-R_{ij}R_{kt}W_{ikjt}= (m-2)D_{itk, tki},
\end{equation}
or, equivalently,
\begin{equation}\label{fourthCond2}
  B_{ik, ki} = \pa{\frac{m-4}{m-2}}D_{itk, tki}.
\end{equation}
\end{theorem}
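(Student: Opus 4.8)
The plan is to obtain \eqref{fourthCond1} by taking one further covariant derivative of the third integrability condition \eqref{thirdCond1} and then contracting. Concretely, I would differentiate $R_{kt}C_{kti}=(m-2)D_{itk, tk}$ with respect to the free index $i$ and sum over $i$, so that the right-hand side becomes exactly $(m-2)D_{itk, tki}$, while by the Leibniz rule the left-hand side splits as $R_{kt, i}C_{kti}+R_{kt}C_{kti, i}$. The whole argument then reduces to identifying these two pieces in terms of $\abs{C}^2$, $R_{ij}B_{ij}$ and the Weyl contraction.

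For the first piece $R_{kt, i}C_{kti}$ I would exploit the skew-symmetry of the Cotton tensor in its last two indices to replace $R_{kt, i}$ by its antisymmetrization, namely $R_{kt, i}C_{kti}=\frac12\pa{R_{kt, i}-R_{ki, t}}C_{kti}$; the definition \eqref{def_Cotton_comp} of the Cotton tensor then rewrites $R_{kt, i}-R_{ki, t}$ as $C_{kti}$ plus two spurious scalar-curvature terms of the form $S_\bullet\delta_{\bullet\bullet}$. Contracting these against $C_{kti}$ they vanish because $C$ is totally trace-free, leaving precisely $\frac12\abs{C}^2$. For the second piece $R_{kt}C_{kti, i}$ I would invoke the definition \eqref{def_Bach_comp} of the Bach tensor, which expresses the divergence as $C_{kti, i}=\pa{m-2}B_{tk}-R_{il}W_{tikl}$; contracting with $R_{kt}$ and using the symmetry of $B$ produces $\pa{m-2}R_{ij}B_{ij}$ together with the Weyl term $-R_{kt}R_{il}W_{tikl}$, which coincides with $-R_{ij}R_{kt}W_{ikjt}$ once one recognizes both as the same $(1,3)$--$(2,4)$ Ricci contraction of $W$ via its symmetries. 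Summing the two pieces yields \eqref{fourthCond1}.

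Finally, the equivalent form \eqref{fourthCond2} follows by taking a further divergence of the known formula \eqref{diverBach} for $\diver B$: writing $B_{ik, ki}=\pa{B_{ik, k}}_{, i}$ and using $B_{ik, k}=\frac{m-4}{\pa{m-2}^2}R_{lt}C_{lti}$, a covariant derivative and contraction in $i$ give $B_{ik, ki}=\frac{m-4}{\pa{m-2}^2}\pa{R_{lt}C_{lti}}_{, i}$, and the bracket is exactly the left-hand side computed above, equal to $(m-2)D_{itk, tki}$; hence $B_{ik, ki}=\frac{m-4}{m-2}D_{itk, tki}$. I expect the only delicate point of the whole argument to be the index bookkeeping in the second piece—in particular verifying through the Weyl symmetries that $R_{kt}R_{il}W_{tikl}=R_{ij}R_{kt}W_{ikjt}$—together with the careful use of the total trace-freeness of $C$ to discard the scalar-curvature terms; everything else is a routine application of the definitions and of the third integrability condition.
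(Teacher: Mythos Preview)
Your proposal is correct and follows essentially the same route as the paper: both take the divergence of the third condition \eqref{thirdCond1}, split the left-hand side via Leibniz, antisymmetrize $R_{kt,i}C_{kti}$ to produce $\tfrac12\abs{C}^2$, and use the definition of the Bach tensor on $R_{kt}C_{kti,i}$ to obtain the remaining two terms; your derivation of \eqref{fourthCond2} via \eqref{diverBach} is equivalent to the paper's one-line ``take the divergence of \eqref{thirdCond2}''. The only extra content you supply is the explicit bookkeeping (trace-freeness of $C$ killing the $S_\bullet\delta_{\bullet\bullet}$ terms, and the Weyl-symmetry check $R_{kt}R_{il}W_{tikl}=R_{ij}R_{kt}W_{ikjt}$), which the paper leaves implicit.
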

\begin{proof}
Equation \eqref{fourthCond2} follows by taking the divergence of \eqref{thirdCond2}. To get \eqref{fourthCond1} we take the divergence of \eqref{thirdCond1},
\[
R_{kt, i}C_{kti}+R_{kt}C_{kti, i} = (m-2)D_{itk, tki}.
\]
Now we use the symmetry of the Cotton tensor and the definition of the Bach tensor, obtaining
\[
\frac{1}{2}\pa{R_{kt, i}-R_{ki, t}}C_{kti}+R_{kt}\sq{(m-2)B_{kt}-R_{ij}W_{ikjt}} = (m-2)D_{itk, tki},
\]
from which we immediately deduce \eqref{fourthCond1}.
\end{proof}

\section{Open questions}\label{SecOpen}
We conclude the paper with a brief overview of interesting open problems.

First of all, \emph{sufficient conditions} for a generic Riemannian manifold to be conformally equivalent (locally or globally) to a Eistein manifold have been found
by several authors, see for instance Gover-Nurowsky \cite{GoverNurowski} and Listing \cite{Listing1}, \cite{Listing2}; it would be of great interest to find similar results in the Ricci soliton case.

Another interesting result would be to deduce some \emph{a priori} estimate on scalar curvature for conformally Einstein manifolds or conformally Ricci solitons,
using PDE methods to study scalar equations obtained from their structure; a similar approach has been used for instance in \cite{MasRigRim} and \cite{CMMRGen}.

In the spirit of \cite{CaoChen} and \cite{CaoCatinoChenMantMazz}, rigidity and classification results for Bach-flat gradient Ricci solitons can be derived using the first and
the second integrability conditions, see also \cite{CMMRGet}. It is then natural to ask if it is possible to obtain similar results under weaker assumptions, such as
$\diver B=0$, exploiting also the third and fourth integrability conditions provided in the previous section. We explicitly remark that in dimension three the condition
$\diver B=0$ is sufficient to obtain the classification, see \cite{CaoCatinoChenMantMazz}. Moreover, in obtaining the aforementioned classification results, a key role is played by the vanishing of the tensor $D$; it would be significant to identify weaker requirements
on the Bach tensor and/or its divergence that could ensure this condition.

%\begin{itemize}
%  \item Conformally Einstein: condizioni sufficienti. Come si estendono al caso dei solitoni?
%  \item Stime sulla scalare a partire dai Laplaciani (sia per conf. Einstein che per conf. solitons)
%  \item condizioni per avere gradientness (sia nel caso $u=0$ che nel caso $u\neq 0$)
%  \item Applicazioni delle condizioni di ordine superiore e scrittura utile della quarta condizione
%  \item Costruzioni di metriche Bach-flat usando il teorema 7.10 (a partire dal caso conformally Einstein)
%  \item Congettura: condizioni su div Bach implicano $D=0$
%
%\end{itemize}
%\begin{itemize}
%  %\item Perchè in due modi su tre di fare i conti non si vede che $f_k\pa{\frac{\Delta f}{m-2}-\Delta u}$? PROBABILMENTE IL PRIMO  ERRATO
%   \item $D_{ijk} = C_{ijk}+f_tW_{tijk}$ è sufficiente per avere una struttura solitonica?
%  %\item Ruolo di $\rho$ e dell'equazione tracciata per i $\rho$-solitoni.
%%  \item $B_{ij}\equiv 0 \Rightarrow D_{ijk} \equiv 0$?
%  \item $\Delta \abs{D}^2$
%
%\end{itemize}

\bibliographystyle{plain}

\bibliography{bibConfRicciSol_FEB14}
\end{document}